\theoremstyle{plain}
\newtheorem{thm}{\protect\theoremname}
  \theoremstyle{definition}
  \newtheorem{defn}[thm]{\protect\definitionname}
  \theoremstyle{remark}
  \newtheorem{rem}[thm]{\protect\remarkname}
  \theoremstyle{plain}
  \newtheorem{lem}[thm]{\protect\lemmaname}
  \theoremstyle{plain}
  \newtheorem{prop}[thm]{\protect\propositionname}
  \theoremstyle{plain}
  \newtheorem{cor}[thm]{\protect\corollaryname}
  \providecommand{\corollaryname}{Corollary}
  \providecommand{\definitionname}{Definition}
  \providecommand{\lemmaname}{Lemma}
  \providecommand{\propositionname}{Proposition}
  \providecommand{\remarkname}{Remark}
\providecommand{\theoremname}{Theorem}
\begin{document}

\title{\qquad\qquad A utility maximization problem 
\\ with state constraint and
non-concave technology}

\author{$\qquad\qquad\qquad\qquad\qquad\,\ \ \ \ \ $\textrm{\textup{Francesco Bartaloni}}\\
$\qquad\qquad\qquad\qquad\qquad\qquad\ \ \ \ $\textrm{\textup{\normalsize{University
of Pisa}}}\textrm{\textup{}}\\
$\qquad\qquad\qquad\qquad\qquad\ \ \ \ \ \ $\textrm{\textup{\normalsize{bartaloni@mail.dm.unipi.it}}}}
\maketitle
\begin{abstract}
We consider an optimal control problem arising in the context of economic
theory of growth, on the lines of the works by Skiba (1978) and Askenazy
- Le Van (1999).

The economic framework of the model is intertemporal infinite horizon utility
maximization. The dynamics involves a state variable representing
total endowment of the social planner or average capital of the representative
dynasty. From the mathematical viewpoint, the main features of the
model are the following: (i) the dynamics is an increasing, unbounded
and not globally concave function of the state; (ii) the state variable
is subject to a static constraint; (iii) the admissible controls are
merely locally integrable in the right half-line. Such assumptions
seem to be weaker than those appearing in most of the existing literature.

We give a direct proof of the existence of an optimal control for
any initial capital $k_{0}\geq0$ and we carry on a qualitative study
of the value function; moreover, using dynamic programming methods,
we show that the value function is a continuous viscosity solution
of the associated Hamilton-Jacobi-Bellman equation.\end{abstract}
\begin{keywords}
Optimal control, utility maximization, convex-concave production function,
Hamilton Jacobi Bellman equation, viscosity solutions.
\end{keywords}
\tableofcontents{}

\section{Introduction}

Utility maximization problems represent a fundamental part of modern
economic growth models, since the works by Ramsey (1928), Romer (1986),
Lucas (1988), Barro and Sala-i-Martin (1999).

These models aim to formalize the dynamics of an economy throughout
the quantitative description of the consumers' behaviour. Consumers
are seen as homogeneous entities, as far as their operative decisions
are concerned; hence the time series of their consuming choices, or
consumption path, is represented by a single function, and they as
a collective are named after \emph{social planner}, or simply\emph{
agent}.

The agent's purpose is to maximize the utility in function of the
series of the consumption choices in a fixed time interval; this can
be finite or more often (as far as economic growth literature is concerned)
infinite.

From the application viewpoint, the target of the analysis is the
study of the optimal – in relation to this utility functional – trajectories:
regularity, monotonicity, asymptotic behaviour properties and similar
are expected to be investigated. Hence good existence results are
specially needed, as well as handy sufficient and necessary conditions
for the optimum.

These problems are treated mathematically as optimal control problems;
often external reasons such as the pursuit of more empirical description
power imply the presence of additional state constraints, which we
may call “static” since they do not involve the derivative of the
state variable.

It is worth noticing that the introduction of the static state constraints
usually makes the problem quite harder (and it is sometimes considered
extraneous to the usual setting of control theory). As an example,
we see that the main properties of optimal trajectories are still
not characterized in recent literature, at least in the case of non-concave
production function.

Hence this kind of program is quite complex, especially in the latter
case – and has to be dealt with in many phases. Here we undertake
the work providing an existence result and various necessary conditions
related to the Hamilton-Jacobi-Bellman problem (HJB), remembering
Skiba (1978) and Askenazy - Le Van (1999) and developing part of the
studies carried on by F. Gozzi and D. Fiaschi (2009).

Some technical difficulties arise as an effect of the generality of
the hypothesis on the data, which are supposed to be the reason of
the versatility and wide-range applicability of this model.

First, the dynamics contains a convex-concave function representing
production. It is well known that the presence of non-concavity in
an optimization problem can lead to many difficulties in establishing
the necessary and sufficient conditions for the optimum, as well as
in examining the regularity properties of the value function.

Secondly, the above mentioned presence of the static state constraint
makes any admissibility proof much more complicated than usual.

As a third relevant feature, we require that the admissible controls
are not more than locally integrable in the positive half-line: this
is the maximal class if one wants the control strategy to be a function
and the state equation to have solution. This is a weak regularity
requirement which is of very little help; conversely it generates
unexpected issues in various respects.

We can summarize the main criticalities entailed by these three traits
as follows:

1. Certain questions appear that in other \textquotedbl{}bounded-control\textquotedbl{}
models are not even present, such as the finiteness of the value function
and the well-posedness of the Hamiltonian problem, i.e. the question
whether the value function is a viscosity solution to the HJB equation.
The notion of viscosity solution can be characterized both in terms
of super- and sub-differentials and of test functions; in any case
these auxiliary tools must match the necessary restrictions to the
domain of the Hamiltonian function, at least for the solutions we
are interested in verifying. Fortunately, we are able to prove certain
regularity properties of the value function ensuring that this is
the case.

2. The problem of the existence of an optimal control strategy (for
every fixed initial state) lacks of couplings in the traditional literature
such as Cesari, Zabczyk, Yong-Zhou. It is a natural idea to make use
of the traditional compactness results, in order to generate a convergent
approximation procedure. As we commit ourselves to deal with merely
(locally) integrable control functions, the application of such compactness
results is not straightforward. Indeed, a very careful preliminary
work is needed, providing a uniform localization lemma. The procedure
has then to be further refined so that we can find a limit function
which is admissible in the sense that it satisfies the static state
constraint, and whose functional is (not less then) the limit of the
approximating sequence functionals.

3. Additional work to the usual proof of the fact that the value function
indeed solves HJB is needed; in fact we use the uniform localization
lemma which appears in the optimality construction: the fundamental
Lemma $\ref{controlli limitati e meglio}$.

4. The regularity property stated in Theorem $\ref{Further prop V}$.ii),
which is necessary in order that the HJB problem is well-posed, not
only requires optimal controls. It can be proven by a standard argument
under the hypothesis that the admissible controls are locally bounded;
in our case it shows again to be useful to come back to the preliminary
tools (Lemmas $\ref{controlli limitati e meglio}$ and $\ref{lemma per monotonia V}$)
in order to move around the obstacle and have the result proven with
merely integrable control functions.\\

The contents are consequently arranged: first, the reader will come
across an introductory paragraph which intends to clear up the genesis
of the model and the economic motivations for the assumptions.

Then comes a section dedicated to the preliminary results that are
crucial for the development of the theory. 

Afterwards, some basic properties of the value function are proven,
such as its behaviour near the origin and near $+\infty$. These results
require careful manipulations of the data and some standard results
about ordinary differential equations, but do not require the existence
of optimal control functions.

Next comes the section in which we prove the existence of an optimal
control strategy for every initial state. Here we make wide use of
the preliminary lemmas in association with a special diagonal procedure
generating a weakly convergent sequence of controls from a family
of sequences which, unlike in Ascoli-Arzelà's theorem, are not extracted
neatly one from the other.

After providing the existence theorem, we are able to prove other
important regularity properties of the value function (such as the
Lipschitz-continuity in the closed intervals of $\left(0,+\infty\right)$),
using optimal controls. 

Eventually we give an application of the methods of Dynamic Programming
to our model. As mentioned before, the proof of the admissibility
of the value function as a viscosity solution of HJB is made more
complicated by the use of the preliminary lemmas, but it allows to
obtain the result independently of the regularity of the Hamiltonian
function, which contributes to make this problem peculiar and hopefully
a source of further motives of scientific interest.\\
\\
We note that the admissible controls, modelling the agent's consumptions,
are supposed to be locally integrable also for representativeness
purposes. Since we are able to reach a local boundedness result (the
above mentioned Lemma $\ref{controlli limitati e meglio}$), we could
also have developed most of the optimum existence proof in $L^{2}$,
and then come back to $L^{1}$. Since this space is our natural environment
we have chosen to use the fact that, for a finite-measure space $E$,
a sequence which is uniformly bounded in $L^{\infty}\left(E\right)$
admits a subsequence which is weakly convergent in $L^{1}\left(E\right)$
(this is an easy consequence of the Dunford-Pettis theorem).

\section{The model}

\subsection{Qualitative description}

We assume the existence of a representative dynasty in which all members
share the same endowments and consume the same amount of a certain
good. Our goal is to describe the dynamics of the capital accumulated
by each member of the dynasty in an infinite-horizon period and to
maximize its intertemporal utility (considered as a function of the
quantity of good $c$ that has been consumed). Clearly, consuming
is seen as the agent's control strategy, and the set of consumption
functions (over time) will be a superset of the set of the admissible
control strategies.

First, we need a notion of instantaneous utility, depending on the
consumptions, in order to define the inter-temporal utility functional.
We will assume that instantaneous utility, which we denote by $u$,
is a strictly increasing and strictly concave function of the consumptions,
and that it is twice continuously differentiable. Moreover, we will
assume the usual Inada's conditions, that is to say:
\[
\lim_{c\to0^{+}}u'\left(c\right)=+\infty,\ \lim_{c\to+\infty}u'\left(c\right)=0.
\]

We will also use the following assumptions on $u$:
\[
u\left(0\right)=0,\ \lim_{c\to+\infty}u\left(c\right)=+\infty.
\]
With this material, we can define the inter-temporal utility functional,
which, as usual, must include a (exponential) discount factor expressing
time preference for consumption:
\begin{equation}
U\left(c\left(\cdot\right)\right):=\int_{0}^{+\infty}e^{-\hat{\rho}t}e^{nt}u\left(c\left(t\right)\right)\mbox{d}t\label{Intertemporal utility}
\end{equation}

where $\hat{\rho}\in\mathbb{R}$ is the rate of time preference and
$n\in\mathbb{R}$ is the growth rate of population. The number of
members of the dynasty at time zero is normalized to $1$.

\subsection{Production function and constraints}

We consider the production or output, denoted by $F$, as a function
of the average capital of the representative dynasty, which we denote
by $k$. First, we assume the usual hypothesis of monotonicity, regularity
and unboundedness about the production, that is to say: $F$ is strictly
increasing and continuously differentiable from $\mathbb{R}$ to $\mathbb{R}$,
and
\begin{align*}
 & F\left(0\right)=0,\ \lim_{k\to+\infty}F\left(k\right)=+\infty
\end{align*}

where we may assume $F\left(x\right)<0$ for every $x\in\left(-\infty,0\right)$,
since the assumption that $F$ is defined in $\left(-\infty,0\right)$
is merely technical, as we will see later; this way we distinguish
the “admissible” values of the production function from the ones which
are not.

Next, we make some specific requirements. As we want to deal with
a non-monotonic marginal product of capital, we assume that, in $\left[0,+\infty\right)$,
$F$ is first strictly concave, then strictly convex and then again
strictly concave up to $+\infty$. This means that in the first phase
of capital accumulation, the production shows decreasing returns to
scale, which become increasing from a certain level of \emph{pro capite}
capital $\underline{k}$. Then, when \emph{pro capite} endowment exceed
a threshold $\overline{k}>\underline{k}$, decreasing returns to scale
characterize the production anew.

Moreover, we ask that the marginal product in $+\infty$ is strictly
positive, so that we can deal with endogenous growth. Observe that
this limit surely exists, as $F'$ is (strictly) decreasing in a neighbourhood
of $+\infty$. Of course the assumption is equivalent to the fact
that the average product of capital tends to a strictly positive quantity
for large values of the average stock of capital. Moreover, requiring
that the marginal product has a strictly positive lower bound is necessary
to ensure a positive long-run growth rate.

As far as the agent's behaviour is concerned, the following constraints
must be satisfied, for every time $t\geq0$:
\begin{align*}
 & k\left(t\right)\geq0,\ c\left(t\right)\geq0\\
 & i\left(t\right)+c\left(t\right)\leq F\left(k\left(t\right)\right),\ \dot{k}\left(t\right)=i\left(t\right)
\end{align*}
where $i\left(t\right)$ is the per capita investment at time $t$.
Observe that the first assumption is needed in order to make the agent's
optimal strategy possibly different from the case of monotonic marginal
product. In fact if condition $\forall t\geq0:k\left(t\right)\geq0$
was not present, then heuristically the convex range of production
function would be not relevant to establish the long-run behaviour
of economy, since every agent would have the possibility to get an
amount of resources such that he can fully exploit the increasing
return; therefore only the form of production function for large $k$
would be relevant.

Another heuristic remark turns out to be crucial: the monotonicity
of $u$ respect to $c$ implies that, if $c$ is an optimal consumption
path, then the production is completely allocated between investment
and consumption, that is to say $i\left(t\right)+c\left(t\right)=F\left(k\left(t\right)\right)$
for every $t\geq0$. This remark, combined with the last of the above
conditions implies that the dynamics of capital allocation, for an
initial endowment $k_{0}\geq0$, is described by the following Cauchy's
problem:
\begin{equation}
\begin{cases}
\dot{k}\left(t\right)=F\left(k\left(t\right)\right)-c\left(t\right) & \mbox{ for }t\geq0\\
k\left(0\right)=k_{0}
\end{cases}\label{equazione stato intro}
\end{equation}
Considering the first two constraints, the agent's target can be expressed
the following way: given an initial endowment of capital $k_{0}\geq0$,
maximize the functional in $\eqref{Intertemporal utility}$, when
$c\left(\cdot\right)$ varies among measurable functions which are
everywhere positive in $\left[0,+\infty\right)$ and such that the
unique solution to problem $\eqref{equazione stato intro}$ is also
everywhere positive in $\left[0,+\infty\right)$; the latter requirement
is usually called a \emph{state constraint}.

A few reflections are still necessary in order to begin the analytic
work. First, we will consider only the case when the time discount
rate $\hat{\rho}$ and the population growth rate $n$ satisfy
\[
\hat{\rho}-n>0,
\]
which is the most interesting from the economic point of view. Second,
we weaken the requirement that $c$ is measurable and positive in
$\left[0,+\infty\right)$ (in order that $c$ is admissible) to the
requirement that $c$ is locally integrable and almost everywhere
positive in $\left[0,+\infty\right)$.

Finally, we need another assumption about instantaneous utility $u$
so that the functional in $\eqref{Intertemporal utility}$ is finite.
To identify the best hypothesis, we temporarily restrict our attention
to the particular but significant case in which $u$ is a concave
power function and $F$ is linear; namely:
\begin{align*}
u\left(c\right)=c^{1-\sigma}, & \quad c\geq0\\
F\left(k\right)=Lk, & \quad k\geq0
\end{align*}
for some $\sigma\in\left(0,1\right)$ and $L>0$ (of course in this
case $F$ does not satisfy all of the previous assumptions). Using
Gronwall's Lemma, it is easy to verify that for any admissible control
$c$ (starting from an initial state $k_{0}$) and for every time
$t\geq0$, $\int_{0}^{t}c\left(s\right)\mbox{d}s\leq k_{0}e^{Lt}$.
Hence, setting $\rho=\hat{\rho}-n$:
\begin{eqnarray*}
U\left(c\left(\cdot\right)\right) & = & \lim_{T\to+\infty}\int_{0}^{T}e^{-\rho t}u\left(c\left(t\right)\right)\mbox{d}t\\
 & = & \lim_{T\to+\infty}e^{-\rho T}\int_{0}^{T}u\left(c\left(s\right)\right)\mbox{d}s+\lim_{T\to+\infty}\rho\int_{0}^{T}e^{-\rho t}\int_{0}^{t}u\left(c\left(s\right)\right)\mbox{d}s\mbox{d}t.
\end{eqnarray*}
Hence using Jensen inequality, we reduce the problem of the convergence
of $U\left(c\left(\cdot\right)\right)$ to the problem of the convergence
of
\[
\int_{1}^{+\infty}te^{-\rho t}e^{L\left(1-\sigma\right)t}\mbox{d}t
\]
which is equivalent to the condition $L\left(1-\sigma\right)<\rho$.
Perturbing this clause by the addition of a positive quantity $\epsilon_{0}$
we get $\left(L+\epsilon_{0}\right)\left(1-\sigma\right)<\rho-\epsilon_{0}$
which is in its turn equivalent to the requirement that the function
$e^{\epsilon_{0}t}e^{-\rho t}\left(e^{\left(L+\epsilon_{0}\right)t}\right)^{1-\sigma}=e^{\epsilon_{0}t}e^{-\rho t}u\left(e^{\left(L+\epsilon_{0}\right)t}\right)$
tends to $0$ as $t\to+\infty$.

Turning back to the general case, we are suggested to assume precisely
the same condition, taking care of defining the constant $L$ as $\lim_{k\to+\infty}F'\left(k\right)$
(which has already been assumed to be strictly positive).

\subsection{Quantitative description}

Hence the mathematical frame of the economic problem can be defined
precisely as follows:
\begin{defn}
For every $k_{0}\geq0$ and for every $c\in\mathcal{L}_{loc}^{1}\left(\left[0,+\infty\right),\mathbb{R}\right)$: 

$k\left(\cdot;k_{0},c\right)$ is the only solution to the Cauchy's
problem
\begin{equation}
\begin{cases}
k\left(0\right)=k_{0}\\
\dot{k}\left(t\right)=F\left(k\left(t\right)\right)-c\left(t\right) & \ t\geq0
\end{cases}\label{eq di stato}
\end{equation}
in the unknown $k$, where $F:\mathbb{R}\to\mathbb{R}$ has the following
properties:
\begin{align*}
 & F\in\mathcal{C}^{1}\left(\mathbb{R},\mathbb{R}\right),\, F'>0\mbox{ in }\mathbb{R},\, F\left(0\right)=0,\,\lim_{x\to+\infty}F\left(x\right)=+\infty,\ {\displaystyle \lim_{x\to+\infty}F'\left(x\right)>0},\\
 & F\mbox{ is concave in }\left[0,\underline{k}\right]\cup\left[\overline{k},+\infty\right)\mbox{ for some }0<\underline{k}<\overline{k}\mbox{ and }F\mbox{ is convex over }\left[\underline{k},\overline{k}\right]
\end{align*}
Moreover, we set $L:={\displaystyle \lim_{x\to+\infty}F'}\left(x\right)$.
\end{defn}
$\,$
\begin{defn}
Let $k_{0}\geq0$ .

The set of \emph{admissible consumption strategies} with initial capital
$k_{0}$ is 
\[
\Lambda\left(k_{0}\right):=\left\{ c\in\mathcal{L}_{loc}^{1}\left(\left[0,+\infty\right),\mathbb{R}\right)/c\geq0\mbox{ almost everywhere},\, k\left(\cdot;k_{0},c\right)\geq0\right\} 
\]
The \emph{intertemporal utility functional} $U\left(\cdot;k_{0}\right)$:$\Lambda\left(k_{0}\right)\to\mathbb{R}$
is
\[
U\left(c;k_{0}\right):=\int_{0}^{+\infty}e^{-\rho t}u\left(c\left(t\right)\right)\mbox{d}t\ \,\forall c\in\Lambda\left(k_{0}\right)
\]
where $\rho>0$, and the function $u:\left[0,+\infty\right)\to\mathbb{R}$,
representing instantaneous utility, is strictly increasing and strictly
concave and satisfies:
\begin{align}
 & u\in\mathcal{C}^{2}\left(\left(0,+\infty\right),\mathbb{R}\right)\cap\mathcal{C}^{0}\left(\left[0,+\infty\right),\mathbb{R}\right),\, u\left(0\right)=0,\,\lim_{x\to+\infty}u\left(x\right)=+\infty\nonumber \\
 & {\displaystyle \lim_{x\to0^{+}}u'\left(x\right)=+\infty,\ \lim_{x\to+\infty}u'\left(x\right)=0}\label{assumption u}\\
 & {\displaystyle \exists\epsilon_{0}>0:\lim_{t\to+\infty}e^{\epsilon_{0}t}e^{-\rho t}u\left(e^{\left(L+\epsilon_{0}\right)t}\right)=0}\nonumber 
\end{align}
The \emph{value function} $V:\left[0,+\infty\right)\to\mathbb{R}$
is
\[
V\left(k_{0}\right):=\sup_{c\in\Lambda\left(k_{0}\right)}U\left(c;k_{0}\right)\ \forall k_{0}\geq0
\]
\end{defn}
\begin{rem}
\label{remark 1}The last condition in $\eqref{assumption u}$ implies:
\begin{align*}
 & \int_{0}^{+\infty}e^{-\rho t}u\left(e^{\left(L+\epsilon_{0}\right)t}\right)\mbox{d}t<+\infty,\ \int_{0}^{+\infty}te^{-\rho t}u\left(e^{\left(L+\epsilon_{0}\right)t}\right)\mbox{d}t<+\infty.
\end{align*}

\end{rem}

\section{Preliminary results}
\begin{rem}
\label{F lip vero con esistenza}Set
\[
\overline{M}:=\max_{\left[0,+\infty\right)}F'=\max\left\{ F'\left(0\right),F'\left(\bar{k}\right)\right\} .
\]
Recalling that $F$ is strictly increasing with $F\left(0\right)=0$,
we see that, for any $x,y\in\left[0,+\infty\right)$:
\begin{align*}
 & \left|F\left(x\right)-F\left(y\right)\right|\leq\overline{M}\left|x-y\right|\\
 & F\left(x\right)\leq\overline{M}x
\end{align*}
In particular $F$ is Lipschitz-continuous.

This implies that the Cauchy's problem $\eqref{eq di stato}$ admits
a unique global solution (that is to say, defined on $\left[0,+\infty\right)$).

Indeed the mapping
\[
\mathcal{F}\left(k\right)\left(t\right):=k_{0}+\int_{0}^{t}F\left(k\left(s\right)\right)\mbox{d}s-\int_{0}^{t}c\left(s\right)\mbox{d}s
\]
 is a contraction on the space $X:=\left(\mathcal{C}^{0}\left(\left[0,\frac{1}{1+\overline{M}}\right]\right),\left\Vert \cdot\right\Vert _{\infty}\right)$,
and so admits a unique fixed point $k\left(\cdot;k_{0},c\right)$.
Considering the mapping
\[
\mathcal{F}\left(k\right)\left(t\right):=k\left(\frac{1}{1+\overline{M}};k_{0},c\right)+\int_{\frac{1}{1+\overline{M}}}^{t}F\left(k\left(s\right)\right)\mbox{d}s-\int_{\frac{1}{1+\overline{M}}}^{t}c\left(s\right)\mbox{d}s
\]
on the space $X':=\left(\mathcal{C}^{0}\left(\left[\frac{1}{1+\overline{M}},\frac{2}{1+\overline{M}}\right]\right),\left\Vert \cdot\right\Vert _{\infty}\right)$,
one can extend $k\left(\cdot;k_{0},c\right)$ to the interval $\left[\frac{1}{1+\overline{M}},\frac{2}{1+\overline{M}}\right]$,
and so on.
\end{rem}
$\,$
\begin{rem}
\label{Lemma Comp ODE}We recall that if $k_{1}$ and $k_{2}$ are
two solutions of $\eqref{eq di stato}$, then the function
\[
h\left(t\right):=\begin{cases}
{\displaystyle \frac{F\left(k_{1}\left(t\right)\right)-F\left(k_{2}\left(t\right)\right)}{k_{1}\left(t\right)-k_{2}\left(t\right)}} & \mbox{ if }k_{1}\left(t\right)\neq k_{2}\left(t\right)\\
F'\left(k_{1}\left(t\right)\right) & \mbox{ if }k_{1}\left(t\right)=k_{2}\left(t\right)
\end{cases}
\]
is continuous in $\left[0,+\infty\right)$.

As a consequence, we have a well known comparison result, which in
our case can be stated as follows:
\end{rem}
\emph{Let $k_{1},k_{2}\geq0$, $c_{1},c_{2}\in\mathcal{L}_{loc}^{1}\left(\left[0,+\infty\right),\mathbb{R}\right)$,
$T_{0}\geq0$ and $T_{1}\in\left(T_{0},+\infty\right]$ such that
$c_{1}\leq c_{2}$ almost everywhere in $\left[T_{0},T_{1}\right]$.
Then the following implications hold:}
\begin{eqnarray}
k\left(T_{0};k_{1},c_{1}\right)=k\left(T_{0};k_{2},c_{2}\right) & \implies & \forall t\in\left[T_{0},T_{1}\right]:k\left(t;k_{1},c_{1}\right)\geq k\left(t;k_{2},c_{2}\right)\label{eq: comp ODE weak}\\
k\left(T_{0};k_{1},c_{1}\right)>k\left(T_{0};k_{2},c_{2}\right) & \implies & \forall t\in\left[T_{0},T_{1}\right]:k\left(t;k_{1},c_{1}\right)>k\left(t;k_{2},c_{2}\right).\label{eq: comp ODE strong}
\end{eqnarray}

$\,$
\begin{lem}
\label{minorante convessa}There exists a function $g:\left(0,+\infty\right)\to\left(0,+\infty\right)$
which is convex, decreasing and such that
\[
g\left(x\right)\leq u'\left(x\right)\quad\forall x>0.
\]
\end{lem}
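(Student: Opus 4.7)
The plan is to take $g$ to be the convex envelope of $u'$ on $(0,+\infty)$, namely
\[
g(x):=\sup\bigl\{\ell(x):\ell:\mathbb{R}\to\mathbb{R}\text{ affine},\ \ell\leq u'\text{ on }(0,+\infty)\bigr\}.
\]
The zero function lies in the set on the right, so the supremum is $\geq 0$; since each admissible $\ell$ satisfies $\ell(x)\leq u'(x)<+\infty$, the supremum is finite and $g\leq u'$. Being a pointwise supremum of affine functions, $g$ is convex. Moreover, any admissible $\ell(y)=ay+b$ must satisfy $a\leq 0$, for otherwise $\ell(y)\to+\infty$ while $u'(y)\to 0$, contradicting $\ell\leq u'$ for $y$ large. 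Hence $g$ is the supremum of non-increasing affine functions, so $g$ is itself non-increasing (indeed strictly decreasing, since a positive convex function whose majorant $u'$ tends to $0$ at $+\infty$ cannot be constant on any sub-interval without collapsing to zero further to the right).

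The crucial step is the strict inequality $g(x_0)>0$ at every $x_0>0$. For each $c>0$ I introduce the continuous auxiliary function $h_c(y):=u'(y)+cy$ on $(0,+\infty)$. The Inada condition $u'(0^+)=+\infty$ and the term $cy\to+\infty$ as $y\to+\infty$ force $h_c$ to diverge at both endpoints, so $h_c$ attains a positive minimum $m_c:=\min_{y>0}h_c(y)=u'(x_c^{*})+cx_c^{*}$ at some $x_c^{*}\in(0,+\infty)$. The defining inequality $u'(y)+cy\geq m_c$ rearranges to $m_c-cy\leq u'(y)$, so $\ell_c(y):=m_c-cy$ is an admissible affine minorant of $u'$.

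Given any $x_0>0$, I choose $c\in\bigl(0,u'(x_0)/x_0\bigr)$. For $y\in(0,x_0]$ the monotonicity of $u'$ yields $u'(y)+cy\geq u'(x_0)>cx_0$, while for $y>x_0$ one has $u'(y)+cy>0+cx_0=cx_0$. Since $m_c$ is actually attained at a point of $(0,+\infty)$, these strict inequalities imply $m_c>cx_0$, hence $\ell_c(x_0)=m_c-cx_0>0$ and $g(x_0)\geq\ell_c(x_0)>0$. The main obstacle is precisely this last step: convexity, monotonicity and the minorant inequality follow at once from the convex-envelope construction, whereas strict positivity requires simultaneously exploiting both Inada conditions on $u'$ (at $0^+$ and at $+\infty$) via the auxiliary slope $c$, and then carefully tracking the position of the minimum of $h_c$ relative to $x_0$.
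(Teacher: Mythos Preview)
Your proof is correct. Both you and the paper construct the convex envelope of $u'$, but from dual viewpoints: the paper works with the closed convex hull $K_{u'}$ of the epigraph $\Sigma_{u'}=\{(x,y)\in(0,+\infty)^2:y\geq u'(x)\}$ and sets $g(x)=\min\{y\geq 0:(x,y)\in K_{u'}\}$, whereas you take the supremum of affine minorants. These two descriptions of the convex envelope coincide, so the resulting $g$ is the same function. For the positivity step, the paper builds, for each fixed $x$, an explicit closed convex superset $K_x\supseteq\Sigma_{u'}$ whose lower boundary on $(0,x)$ is the positive linear segment $t\mapsto \frac{\min_{[0,x]}u'}{x}(x-t)$; your construction of the affine minorant $\ell_c(y)=m_c-cy$ with $c<u'(x_0)/x_0$ achieves the same goal more directly and makes the role of both Inada conditions (at $0^+$ and at $+\infty$) transparent. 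Your argument is somewhat more self-contained, while the paper's geometric approach makes the convexity of $g$ immediate from the convexity of $K_{u'}$. One minor remark: your parenthetical claim of \emph{strict} monotonicity is not needed---the paper only proves and uses $g(x_1)\leq g(x_0)$ for $x_0<x_1$---and the sketch you give there would require a bit more care, but this does not affect the validity of the proof.
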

\begin{proof}
Let
\begin{align*}
 & \Sigma_{u'}:=\left\{ \left(x,y\right)\in\left(0,+\infty\right)^{2}/y\geq u'\left(x\right)\right\} \\
 & K_{u'}:=\bigcap\left\{ K\in\mathcal{P}\left(\mathbb{R}^{2}\right)/K=\overline{K},\, K\mbox{ is convex},\, K\supseteq\Sigma_{u'}\right\} .
\end{align*}
In particular $K_{u'}$ is a closed-convex superset of $\Sigma_{u'}$.
Observe that, for any $x>0$, the function $H_{x}\left(y\right):=\left(x,y\right)$
belongs to $\mathcal{C}^{0}\left(\mathbb{R},\mathbb{R}^{2}\right)$,
so any set of the form 
\[
\left\{ y\geq0/\left(x,y\right)\in K_{u'}\right\} =H_{x}^{-1}\left(K_{u'}\right)\bigcap\left[0,+\infty\right)
\]
is closed in $\mathbb{R}$, and consequently it has a minimum element.
Now define
\[
\forall x>0:g\left(x\right):=\min\left\{ y\geq0/\left(x,y\right)\in K_{u'}\right\} .
\]
i) This definition implies that for every $\left(x,y\right)\in K_{u'}$,
$g\left(x\right)\leq y$; hence 
\[
g\left(x\right)\leq u'\left(x\right)\quad\forall x>0
\]
because for any $x>0$, $\left(x,u'\left(x\right)\right)\in\Sigma_{u'}\subseteq K_{u'}$.

ii) In the second place, $g$ is convex in $\left(0,+\infty\right)$.
Let $x_{0},x_{1}>0$ and $\lambda\in\left(0,1\right)$. By definition
of $g$, $\left(x_{0},g\left(x_{0}\right)\right),\left(x_{1},g\left(x_{1}\right)\right)\in K_{u'}$,
which is a convex set. Hence
\[
\left(1-\lambda\right)\left(x_{0},g\left(x_{0}\right)\right)+\lambda\left(x_{1},g\left(x_{1}\right)\right)\in K_{u'}.
\]
By the first property in i), this implies
\[
g\left(\left(1-\lambda\right)x_{0}+\lambda x_{1}\right)\leq\left(1-\lambda\right)g\left(x_{0}\right)+\lambda g\left(x_{1}\right).
\]
iii) $g$ is decreasing. Indeed, take $0<x_{0}<x_{1}$. By ii) and
by definition of convexity, for every $n\in\mathbb{N}$:
\begin{eqnarray*}
g\left(n\left(x_{1}-x_{0}\right)+x_{0}\right) & \geq & n\left[g\left(x_{1}\right)-g\left(x_{0}\right)\right]+g\left(x_{0}\right).
\end{eqnarray*}
Hence by the assumptions on $u$ and by i):
\begin{eqnarray*}
0 & = & \lim_{n\to+\infty}u'\left(n\left(x_{1}-x_{0}\right)+x_{0}\right)\geq\limsup_{n\to+\infty}g\left(n\left(x_{1}-x_{0}\right)+x_{0}\right)\\
 & \geq & \lim_{n\to+\infty}n\left[g\left(x_{1}\right)-g\left(x_{0}\right)\right]+g\left(x_{0}\right)
\end{eqnarray*}
which implies $g\left(x_{1}\right)\leq g\left(x_{0}\right)$.

iv) Observe that the definition of $g$ does not exclude that $g\left(x\right)=0$
for some $x>0$. Indeed we show that $g>0$ in $\left(0,+\infty\right)$.

Fix $x>0$, and consider the closed-convex aproximation of $\Sigma_{u'}$
\[
K_{x}:=\left\{ \left(t,y\right)\in\left[0,x\right]\times\left[0,+\infty\right)/y\geq\frac{\min_{\left[0,x\right]}u'}{x}\left(x-t\right)\right\} \bigcup\left[x,+\infty\right)\times\left[0,+\infty\right).
\]
By construction $K_{u'}\subseteq K_{x}$ which implies $\left(t,g\left(t\right)\right)\in K_{x}$
for any $t>0$. In particular, for every $t\in\left(0,x\right)$:
\[
g\left(t\right)\geq\frac{\min_{\left[0,x\right]}u'}{x}\left(x-t\right)>0
\]
because $u'>0$. This is precisely the fact that allows us to repeat
this construction for every $x>0$, which ensures that $g>0$ in $\left(0,+\infty\right)$.
\end{proof}
$\,$
\begin{rem}
\label{F Lip} The function $h$ defined in Remark $\ref{Lemma Comp ODE}$
satisfies
\[
\left|h\right|\leq\overline{M}.
\]
where $\overline{M}$ is defined as in Remark $\ref{F lip vero con esistenza}$.
\end{rem}
$\,$
\begin{rem}
\label{prime stime Ic}Let $k_{0}\geq0$ and $c\in\Lambda\left(k_{0}\right)$.
Then, for every $t\geq0$:
\begin{eqnarray*}
k\left(t;k_{0},c\right) & \leq & k_{0}e^{\overline{M}t}\\
\int_{0}^{t}c\left(s\right)\mbox{d}s & \leq & k_{0}e^{\overline{M}t}
\end{eqnarray*}
Indeed, by Remark $\ref{F lip vero con esistenza}$ and remembering
that $c\geq0$, we have, for every $t\geq0$, $\dot{k}\left(t;k_{0},c\right)\leq\overline{M}k\left(t;k_{0},c\right)$
- which implies by $\eqref{eq: comp ODE weak}$:
\[
k\left(t;k_{0},c\right)\leq k_{0}e^{\overline{M}t}\quad\forall t\geq0.
\]

Now integrating both sides of the state equation, again by Remark
$\ref{F lip vero con esistenza}$ and by the fact that $k\left(\cdot;k_{0},c\right)\geq0$
we see that, for every $t\geq0$:
\begin{eqnarray*}
\int_{0}^{t}c\left(s\right)\mbox{d}s & = & k_{0}-k\left(t;k_{0},c\right)+\int_{0}^{t}F\left(k\left(s;k_{0},c\right)\right)\mbox{d}s\\
 & \leq & k_{0}+\overline{M}\int_{0}^{t}k\left(s;k_{0},c\right)\mbox{d}s\\
 & \leq & k_{0}+\overline{M}k_{0}\int_{0}^{t}e^{\overline{M}s}\mbox{d}s=k_{0}e^{\overline{M}t}.
\end{eqnarray*}
\end{rem}
\begin{lem}
\label{controlli limitati e meglio}There exists a function $N:\left(0,+\infty\right)^{2}\to\left(0,+\infty\right)$,
increasing in both variables, such that:

for every $\left(k_{0},T\right)\in\left(0,+\infty\right)^{2}$ and
every $c\in\Lambda\left(k_{0}\right)$, there exists a control function
$c^{T}\in\Lambda\left(k_{0}\right)$ satisfying
\begin{eqnarray*}
 & U\left(c^{T};k_{0}\right)\geq U\left(c;k_{0}\right)\\
 & c^{T}=c\wedge N\left(k_{0},T\right)\mbox{ almost everywhere in }\left[0,T\right]
\end{eqnarray*}
In particular, $c^{T}$ is bounded above, in $\left[0,T\right]$,
by a quantity which does not depend on the original control $c$,
but only on $T$ and on the initial status $k_{0}$.\end{lem}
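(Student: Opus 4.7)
The plan is to construct $c^T$ by truncating $c$ at level $N$ on $[0,T]$ and compensating the lost utility by injecting a ``bonus'' on a subset of $[T,T+1]$ where $c$ is a priori bounded. If $c\leq N$ almost everywhere on $[0,T]$, set $c^T:=c$ and there is nothing to prove. Otherwise put $c^T:=c\wedge N$ on $[0,T]$: by~\eqref{eq: comp ODE weak} the trajectory $k^T:=k(\cdot;k_0,c^T)$ dominates $k_c$ on $[0,T]$, and integration of the state equation yields the capital surplus $\Delta:=k^T(T)-k_c(T)\geq A:=\int_0^T[c(s)-N]_{+}\,\mathrm{d}s$, while Remark~\ref{prime stime Ic} gives $\Delta\leq k_0 e^{\overline{M}T}$.

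Let $K:=2k_0 e^{\overline{M}(T+1)}$ and $S:=\{t\in[T,T+1]:c(t)\leq K\}$. Chebyshev's inequality together with Remark~\ref{prime stime Ic} applied on $[0,T+1]$ yields $|[T,T+1]\setminus S|\leq k_0 e^{\overline{M}(T+1)}/K=1/2$, hence $|S|\geq 1/2$. Define
\[
c^T(t):=\begin{cases}c(t)\wedge N & t\in[0,T],\\ c(t)+\Delta\,\mathbf{1}_S(t) & t\in[T,T+1],\\ c(t) & t>T+1.\end{cases}
\]
For admissibility, observe that $D:=k^T-k_c$ satisfies $\dot{D}=h(t)D-\Delta\,\mathbf{1}_S$ on $[T,T+1]$ with $D(T)=\Delta$; variation of constants gives $D(t)=e^{\int_T^t h}\bigl[\Delta-\Delta\int_T^t \mathbf{1}_S(r)e^{-\int_T^r h(s)\,\mathrm{d}s}\,\mathrm{d}r\bigr]\geq 0$, because the final integral is bounded above by $t-T\leq 1$. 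Since $c^T=c$ on $(T+1,+\infty)$ and $D(T+1)\geq 0$, the relation $\dot{D}=hD\geq 0$ there preserves $D\geq 0$, and therefore $k^T\geq k_c\geq 0$ everywhere.

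The main obstacle is the utility inequality. By concavity of $u$ the loss on $[0,T]$ satisfies $\int_0^T e^{-\rho t}[u(c)-u(c\wedge N)]\,\mathrm{d}t\leq u'(N)\cdot A$; on $S$ one has $c\leq K$, so concavity yields $u(c+\Delta)-u(c)\geq\Delta\,u'(K+\Delta)$ pointwise, whence
\[
\int_T^{T+1}e^{-\rho t}[u(c^T)-u(c)]\,\mathrm{d}t \;\geq\; \tfrac{1}{2}e^{-\rho(T+1)}\,u'\bigl(3k_0 e^{\overline{M}(T+1)}\bigr)\,A,
\]
using $|S|\geq 1/2$, $\Delta\geq A$ and $K+\Delta\leq 3k_0 e^{\overline{M}(T+1)}$. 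Hence the lemma holds as soon as $u'(N)\leq \tfrac{1}{2}e^{-\rho(T+1)}u'\bigl(3k_0 e^{\overline{M}(T+1)}\bigr)$; since $u'$ is, by~\eqref{assumption u}, a continuous strictly decreasing bijection of $(0,+\infty)$ onto itself, I set $N(k_0,T):=(u')^{-1}\bigl(\tfrac{1}{2}e^{-\rho(T+1)}u'(3k_0 e^{\overline{M}(T+1)})\bigr)$, which is positive and increasing in both variables because the argument of $(u')^{-1}$ is decreasing in each. The delicate point, and the reason for introducing $S$ via Chebyshev, is that without the uniform bound $c\leq K$ on $S$ one cannot lower-bound the gain on $[T,T+1]$ independently of $c$, which is what allows the choice of $N$ to depend only on $k_0$ and $T$.
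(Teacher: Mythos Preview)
Your proof is correct and takes a genuinely different route from the paper's. The paper adds a constant bonus $I_T:=\int_0^T e^{-\rho t}[c-c\wedge N]_+\,\mathrm{d}t$ on a fixed interval $(T,T+\beta]$ with $\beta=\log(1+\overline{M})/\overline{M}$ chosen so that the admissibility computation closes exactly; since $c$ is not a~priori bounded on that interval, the lower bound on $\int_T^{T+\beta} e^{-\rho t}u'(c+I_T)\,\mathrm{d}t$ requires the auxiliary Lemma~\ref{minorante convessa} (a convex decreasing minorant $g\leq u'$) together with Jensen's inequality. Your Chebyshev trick sidesteps this entirely: by working only on the set $S\subseteq[T,T+1]$ where $c\leq K$, you get the pointwise bound $u'(c+\Delta)\geq u'(K+\Delta)$ for free, and the lower bound on the gain follows directly from $|S|\geq 1/2$. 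This makes your argument more elementary---it does not need Lemma~\ref{minorante convessa} at all---at the modest cost of a slightly less explicit $c^T$ (supported on a $c$-dependent set rather than a fixed interval). Both admissibility arguments rely on the same variation-of-constants identity; yours uses only $h>0$ (guaranteed by $F'>0$) in place of the calibrated $\beta$. Since the rest of the paper uses only the two conclusions of the lemma and the monotonicity of $N$, your version is fully interchangeable with the original.
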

\begin{proof}
Let $g$ be the function defined in Lemma $\ref{minorante convessa}$
and $\beta:=\frac{\log\left(1+\overline{M}\right)}{\overline{M}}$.
Define, for every $\left(k_{0},T\right)\in\left(0,+\infty\right)^{2}$
:
\begin{align*}
 & \alpha\left(k_{0},T\right):=\beta e^{-\rho\left(T+\beta\right)}g\left[k_{0}\left(\frac{e^{\overline{M}\left(T+\beta\right)}}{\beta}+e^{\overline{M}T}\right)\right]\\
 & N\left(k_{0},T\right):=\inf\left\{ \tilde{N}>0/\forall N\geq\tilde{N}:u'\left(N\right)<\alpha\left(k_{0},T\right)\right\} .
\end{align*}
 In the first place, $N\left(k_{0},T\right)\neq+\infty$, because
$\alpha\left(k_{0},T\right)>0$ for every $k_{0}>0$, $T>0$ and $\lim_{N\to+\infty}u'\left(N\right)=0$.

In the second place, $u'\left(\left(0,+\infty\right)\right)=\left(0,+\infty\right)$,
which implies $N\left(k_{0},T\right)>0$: otherwise, since $\left(u'\right)^{-1}$$\left(\alpha\left(k_{0},T\right)\right)>0$,
there would exist $N>0$ such that 
\begin{align*}
 & N<\left(u'\right)^{-1}\left(\alpha\left(k_{0},T\right)\right)\\
 & u'\left(N\right)<\alpha\left(k_{0},T\right)
\end{align*}
which is absurd because $u'$ is decreasing; hence the quantity $u'\left(N\left(k_{0},T\right)\right)$
is well defined. Moreover by the continuity of $u'$,
\begin{equation}
u'\left(N\left(k_{0},T\right)\right)\leq\alpha\left(k_{0},T\right).\label{N va bene}
\end{equation}
The function $N\left(\cdot,\cdot\right)$ is also increasing in both
variables, because $\alpha\left(\cdot,\cdot\right)$ is decreasing
in both variables and $u'$ is decreasing.

Indeed, for $k_{0}\leq k_{1}$ and for a fixed $T>0$, suppose that
$N\left(k_{1},T\right)<N\left(k_{0},T\right)$. Then by definition
of infimum we could choose $\tilde{N}\in\left[N\left(k_{1},T\right),N\left(k_{0},T\right)\right)$
such that $u'\left(\tilde{N}\right)<\alpha\left(k_{1},T\right)$,
which implies
\[
u'\left(\tilde{N}\right)<\alpha\left(k_{0},T\right)
\]
by the monotonicity of $\alpha$. But since $\tilde{N}>0$, $\tilde{N}<N\left(k_{0},T\right)$
there also exists $N\geq\tilde{N}$ such that $u'\left(N\right)\geq\alpha\left(k_{0},T\right)$
which implies, by the monotonicity of $u'$,
\[
u'\left(\tilde{N}\right)\geq\alpha\left(k_{0},T\right),
\]
a contradiction. With an analogous argument we prove that $N\left(\cdot,\cdot\right)$
is increasing in the second variable.

Now let $k_{0},T>0$ and $c\in\Lambda\left(k_{0}\right)$ as in the
hypothesis. If $c\leq N\left(k_{0},T\right)$ almost everywhere in
$\left[0,T\right]$, then define $c^{T}:=c$. If, on the contrary,
$c>N\left(k_{0},T\right)$ in a non-negligible subset of $\left[0,T\right]$,
then define:
\[
c^{T}\left(t\right):=\begin{cases}
c\left(t\right)\wedge N\left(k_{0},T\right) & \mbox{ if }t\in\left[0,T\right]\\
c\left(t\right)+I_{T} & \mbox{ if }t\in\left(T,T+\beta\right]\\
c\left(t\right) & \mbox{ if }t>T+\beta
\end{cases}
\]
where $I_{T}:=\int_{0}^{T}e^{-\rho t}\left(c\left(t\right)-c\left(t\right)\wedge N\left(k_{0},T\right)\right)\mbox{d}t$.
Observe that by Remark $\ref{prime stime Ic}$: 
\begin{eqnarray}
0<\ I_{T} & \leq & \int_{0}^{T}\left(c\left(t\right)-c\left(t\right)\wedge N\left(k_{0},T\right)\right)\mbox{d}t\nonumber \\
 & \leq & \int_{0}^{T}c\left(t\right)\mbox{d}t\nonumber \\
 & \leq & k_{0}e^{\overline{M}T}\label{stime su I_T,N}
\end{eqnarray}
In order to prove the admissibility of such control function, we compare
the orbit $k:=k\left(\cdot;k_{0},c\right)$ to the orbit $k^{T}:=k\left(\cdot;k_{0},c^{T}\right)$.
In the first place, observe that by $\eqref{eq: comp ODE weak}$ and
by definition of $c^{T}$:
\begin{equation}
k^{T}\left(t\right)\geq k\left(t\right)\quad\forall t\in\left[0,T\right]\label{confronto su [0,T]}
\end{equation}
Now by the state equation, we have:
\begin{equation}
\dot{k^{T}}-\dot{k}=F\left(k^{T}\right)-F\left(k\right)+c-c^{T}.\label{sottr state eq}
\end{equation}
Set for every $t\geq0$:
\[
h\left(t\right):=\begin{cases}
\frac{F\left(k^{T}\left(t\right)\right)-F\left(k\left(t\right)\right)}{k^{T}\left(t\right)-k\left(t\right)} & \mbox{ if }k^{T}\left(t\right)\neq k\left(t\right)\\
F'\left(k\left(t\right)\right) & \mbox{ if }k^{T}\left(t\right)=k\left(t\right)
\end{cases}
\]
Hence by $\eqref{sottr state eq}$
\[
\dot{k^{T}}\left(t\right)-\dot{k}\left(t\right)=h\left(t\right)\left[k^{T}\left(t\right)-k\left(t\right)\right]+c\left(t\right)-c^{T}\left(t\right)\quad\forall t\geq0.
\]
By Remark $\ref{Lemma Comp ODE}$, the function $h$ is continuous
in $\left[0,+\infty\right)$, so this is a typical linear equation
with measurable coefficient of degree one, satisfied by $k^{T}-k$.
Hence, multiplying both sides by the continuous function $t\to\exp\left(-\int_{0}^{t}h\left(s\right)\mbox{d}s\right)$,
we obtain:
\[
\frac{\mbox{d}}{\mbox{d}t}\left\{ \left[k^{T}\left(t\right)-k\left(t\right)\right]e^{-\int_{0}^{t}h\left(s\right)\mbox{d}s}\right\} =\left[c\left(t\right)-c^{T}\left(t\right)\right]e^{-\int_{0}^{t}h\left(s\right)\mbox{d}s}\quad\forall t\geq0
\]
which implies, integrating between $0$ and any $t\geq0$:
\begin{eqnarray}
k^{T}\left(t\right)-k\left(t\right) & = & \int_{0}^{t}\left[c\left(s\right)-c^{T}\left(s\right)\right]e^{\int_{s}^{t}h}\mbox{d}s\label{orbite a confronto}
\end{eqnarray}
Now observe that 
\begin{equation}
h\leq\overline{M}\mbox{ in }\left[0,+\infty\right)\mbox{ and }h\geq0\mbox{ in }\left[0,T\right]\label{fondam aggiunta}
\end{equation}

by $\eqref{confronto su [0,T]}$ and the monotonicity of $F$. Set
$t\in\left(T,T+\beta\right]$; then by $\eqref{orbite a confronto}$
and $\eqref{fondam aggiunta}$:
\begin{eqnarray}
k^{T}\left(t\right)-k\left(t\right) & = & \int_{0}^{T}\left[c\left(s\right)-c\left(s\right)\wedge N\left(k_{0},T\right)\right]e^{\int_{s}^{t}h}\mbox{d}s-I_{T}\cdot\int_{T}^{t}e^{\int_{s}^{t}h}\mbox{d}s\nonumber \\
 & \geq & \int_{0}^{T}\left[c\left(s\right)-c\left(s\right)\wedge N\left(k_{0},T\right)\right]\mbox{d}s-I_{T}\cdot\int_{T}^{t}e^{\overline{M}\left(t-s\right)}\mbox{d}s\nonumber \\
 & \geq & \int_{0}^{T}e^{-\rho s}\left[c\left(s\right)-c\left(s\right)\wedge N\left(k_{0},T\right)\right]\mbox{d}s-I_{T}\cdot\int_{T}^{T+\beta}e^{\overline{M}\left(T+\beta-s\right)}\mbox{d}s\nonumber \\
 & = & I_{T}\left(1-\frac{e^{\overline{M}\beta}-1}{\overline{M}}\right)=0\label{confronto[T,T+beta]}
\end{eqnarray}
This also implies, by $\eqref{eq: comp ODE weak}$ and by definition
of $c^{T}$,
\[
k^{T}\left(t\right)\geq k\left(t\right)\quad\forall t\geq T+\beta
\]

Such inequality, together with $\eqref{confronto su [0,T]}$ and $\eqref{confronto[T,T+beta]}$,
gives us the general inequality
\begin{align*}
 & k^{T}\left(t\right)\geq k\left(t\right)\geq0\quad\forall t\geq0.
\end{align*}
This implies, associated with the obvious fact that $c^{T}\geq0$
almost everywhere in $\left[0,+\infty\right)$, that $c^{T}\in\Lambda\left(k_{0}\right)$.

Now we prove the ``optimality'' property of $c^{T}$ respect to
$c$. By the concavity of $u$, and setting $N:=N\left(k_{0},T\right)$
for simplicity of notation, we have:
\begin{eqnarray}
U\left(c;k_{0}\right)-U\left(c^{T};k_{0}\right) & = & \int_{0}^{+\infty}e^{-\rho t}\left[u\left(c\left(t\right)\right)-u\left(c^{T}\left(t\right)\right)\right]\mbox{d}t\nonumber \\
 & = & \int_{\left[0,T\right]\cap\left\{ c\geq N\right\} }e^{-\rho t}\left[u\left(c\left(t\right)\right)-u\left(c\left(t\right)\wedge N\right)\right]\mbox{d}t\nonumber \\
 &  & +\int_{T}^{T+\beta}e^{-\rho t}\left[u\left(c\left(t\right)\right)-u\left(c\left(t\right)+I_{T}\right)\right]\mbox{d}t\nonumber \\
 & \leq & \int_{\left[0,T\right]\cap\left\{ c\geq N\right\} }e^{-\rho t}u'\left(c\left(t\right)\wedge N\right)\left[c\left(t\right)-c\left(t\right)\wedge N\right]\mbox{d}t\nonumber \\
 &  & -I_{T}\int_{T}^{T+\beta}e^{-\rho t}u'\left(c\left(t\right)+I_{T}\right)\mbox{d}t\nonumber \\
 & = & u'\left(N\right)\int_{0}^{T}e^{-\rho t}\left[c\left(t\right)-c\left(t\right)\wedge N\right]\mbox{d}t\nonumber \\
 &  & -I_{T}\int_{T}^{T+\beta}e^{-\rho t}u'\left(c\left(t\right)+I_{T}\right)\mbox{d}t\nonumber \\
 & = & I_{T}\left[u'\left(N\right)-\int_{T}^{T+\beta}e^{-\rho t}u'\left(c\left(t\right)+I_{T}\right)\mbox{d}t\right]\label{nuovo contr quasi meglio}
\end{eqnarray}
Now we exhibit a certain lower bound wich is independent on the particular
control function $c$. By Jensen inequality, by Lemma $\ref{minorante convessa}$
and by $\eqref{stime su I_T,N}$, we have:
\begin{eqnarray*}
\int_{T}^{T+\beta}e^{-\rho t}u'\left(c\left(t\right)+I_{T}\right)\mbox{d}t & \geq & \int_{T}^{T+\beta}e^{-\rho t}g\left(c\left(t\right)+I_{T}\right)\mbox{d}t\\
 & \geq & e^{-\rho\left(T+\beta\right)}\int_{T}^{T+\beta}g\left(c\left(t\right)+I_{T}\right)\mbox{d}t\\
 & \geq & \beta e^{-\rho\left(T+\beta\right)}g\left(\frac{1}{\beta}\int_{T}^{T+\beta}\left[c\left(t\right)+I_{T}\right]\mbox{d}t\right)\\
 & \geq & \beta e^{-\rho\left(T+\beta\right)}g\left(\frac{1}{\beta}\int_{0}^{T+\beta}c\left(t\right)\mbox{d}t+I_{T}\right)\\
 & \geq & \beta e^{-\rho\left(T+\beta\right)}g\left[k_{0}\left(\frac{e^{\overline{M}\left(T+\beta\right)}}{\beta}+e^{\overline{M}T}\right)\right]\\
 & = & \alpha\left(k_{0},T\right).
\end{eqnarray*}
Hence by $\eqref{N va bene}$ and $\eqref{nuovo contr quasi meglio}$:
\begin{eqnarray*}
U\left(c;k_{0}\right)-U\left(c^{T};k_{0}\right) & \leq & I_{T}\left[u'\left(N\left(k_{0},T\right)\right)-\int_{T}^{T+\beta}e^{-\rho t}u'\left(c\left(t\right)+I_{T}\right)\mbox{d}t\right]\\
 & \leq & I_{T}\left[u'\left(N\left(k_{0},T\right)\right)-\alpha\left(k_{0},T\right)\right]\leq0.
\end{eqnarray*}
\end{proof}
\begin{lem}
\label{lemma per monotonia V}Let $0<k_{0}<k_{1}$ and $c\in\Lambda\left(k_{0}\right)$.
Then there exists a control function $\underline{c}^{k_{1}-k_{0}}\in\Lambda\left(k_{1}\right)$
such that
\[
U\left(\underline{c}^{k_{1}-k_{0}};k_{1}\right)-U\left(c;k_{0}\right)\geq u'\left(N\left(k_{0},k_{1}-k_{0}\right)+1\right)\int_{0}^{k_{1}-k_{0}}e^{-\rho t}\mbox{d}t
\]
where $N$ is the function defined in \emph{Lemma} $\ref{controlli limitati e meglio}$.\end{lem}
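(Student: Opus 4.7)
The plan is to construct $\underline{c}^{k_{1}-k_{0}}$ by spending the surplus capital $k_{1}-k_{0}$ as additional consumption at rate $1$ on the initial time window $[0,T]$ with $T:=k_{1}-k_{0}$, after having first replaced $c$ by the bounded control $c^{T}$ produced by Lemma \ref{controlli limitati e meglio}. Concretely, I will set
\[
\underline{c}^{k_{1}-k_{0}}(t):=\begin{cases}c^{T}(t)+1 & \text{if }t\in[0,T]\\ c^{T}(t) & \text{if }t>T\end{cases}
\]
where $c^{T}\in\Lambda(k_{0})$ comes from the cited lemma applied to $(k_{0},T)$, so that $c^{T}\leq N(k_{0},T)$ almost everywhere on $[0,T]$ and $U(c^{T};k_{0})\geq U(c;k_{0})$.

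The main obstacle will be checking that $\underline{c}^{k_{1}-k_{0}}\in\Lambda(k_{1})$, i.e.\ that the trajectory $\tilde{k}:=k(\cdot;k_{1},\underline{c}^{k_{1}-k_{0}})$ stays non-negative. I will compare $\tilde{k}$ with $k^{T}:=k(\cdot;k_{0},c^{T})$ via the linear ODE satisfied by their difference, exactly as in the proof of Lemma \ref{controlli limitati e meglio}: defining $h$ as in Remark \ref{Lemma Comp ODE} (with $\tilde{k}$ and $k^{T}$ in place of $k_{1}$ and $k_{2}$) and using the integrating factor yields
\[
\tilde{k}(t)-k^{T}(t)=(k_{1}-k_{0})\,e^{\int_{0}^{t}h}-\int_{0}^{t}e^{\int_{s}^{t}h}\,\mbox{d}s\qquad\forall t\in[0,T].
\]
Let $\tau:=\inf\{t\geq 0:\tilde{k}(t)\leq k^{T}(t)\}$. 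Since $\tilde{k}(0)=k_{1}>k_{0}=k^{T}(0)$, continuity gives $\tau>0$; on $[0,\tau)$ one has $\tilde{k}>k^{T}$, hence $h\geq 0$ by the monotonicity of $F$, and inserting $e^{\int_{s}^{t}h}\geq 1$ in the formula above gives $\tilde{k}(t)-k^{T}(t)\geq(k_{1}-k_{0})-t=T-t$ for all $t\in[0,\tau)\cap[0,T]$. If it were $\tau<T$, continuity would force $\tilde{k}(\tau)=k^{T}(\tau)$ while the estimate would give $\tilde{k}(\tau)-k^{T}(\tau)\geq T-\tau>0$, a contradiction; thus $\tau\geq T$ and in particular $\tilde{k}(T)\geq k^{T}(T)\geq 0$. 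On $[T,+\infty)$ the two trajectories are driven by the same control $c^{T}$, so $\eqref{eq: comp ODE weak}$ forces $\tilde{k}\geq k^{T}\geq 0$, and admissibility follows.

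Once admissibility is in hand the utility estimate is quick. Since $\underline{c}^{k_{1}-k_{0}}$ coincides with $c^{T}$ off $[0,T]$ and $U(c^{T};k_{0})\geq U(c;k_{0})$ by Lemma \ref{controlli limitati e meglio},
\[
U(\underline{c}^{k_{1}-k_{0}};k_{1})-U(c;k_{0})\geq\int_{0}^{T}e^{-\rho t}\bigl[u(c^{T}(t)+1)-u(c^{T}(t))\bigr]\,\mbox{d}t.
\]
Concavity of $u$ gives $u(c^{T}(t)+1)-u(c^{T}(t))\geq u'(c^{T}(t)+1)$, and the bound $c^{T}(t)\leq N(k_{0},T)=N(k_{0},k_{1}-k_{0})$ almost everywhere on $[0,T]$, combined with the monotonicity of $u'$, yields $u'(c^{T}(t)+1)\geq u'(N(k_{0},k_{1}-k_{0})+1)$, from which the claimed lower bound follows.
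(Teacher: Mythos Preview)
Your construction of $\underline{c}^{k_{1}-k_{0}}$ and the utility estimate are exactly those of the paper, and the admissibility argument is the ``comparison technique'' that the paper mentions in the Remark immediately following its proof. However, there is a slip in your admissibility computation. From
\[
\tilde{k}(t)-k^{T}(t)=(k_{1}-k_{0})\,e^{\int_{0}^{t}h}-\int_{0}^{t}e^{\int_{s}^{t}h}\,\mbox{d}s,
\]
the inequality $e^{\int_{s}^{t}h}\geq 1$ goes the \emph{wrong} way for the subtracted integral: it yields $-\int_{0}^{t}e^{\int_{s}^{t}h}\,\mbox{d}s\leq -t$, which combined with $(k_{1}-k_{0})e^{\int_{0}^{t}h}\geq k_{1}-k_{0}$ does not give a lower bound. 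The correct move is to factor first,
\[
\tilde{k}(t)-k^{T}(t)=e^{\int_{0}^{t}h}\Bigl[(k_{1}-k_{0})-\int_{0}^{t}e^{-\int_{0}^{s}h}\,\mbox{d}s\Bigr],
\]
and then use $h\geq 0$ on $[0,\tau)$ to get $e^{-\int_{0}^{s}h}\leq 1$, hence the bracket is $\geq (k_{1}-k_{0})-t=T-t\geq 0$ for $t\in[0,\tau)\cap[0,T]$, and the outer factor is $\geq 1$. With this correction your contradiction argument goes through and the rest of the proof is fine. The paper's own proof avoids the integrating factor altogether and argues directly from the integral form of the state equation, using only the strict monotonicity of $F$; your route is slightly longer but equally valid once the inequality is applied in the right direction.
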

\begin{proof}
Fix $k_{0},k_{1}$ and $c$ as in the hypothesis and take $c^{k_{1}-k_{0}}$
as in Lemma $\ref{controlli limitati e meglio}$ (where it is understood
that $T=k_{1}-k_{0}$).Then define:
\[
\underline{c}^{k_{1}-k_{0}}\left(t\right):=\begin{cases}
c^{k_{1}-k_{0}}\left(t\right)+1 & \mbox{ if }t\in\left[0,k_{1}-k_{0}\right)\\
c^{k_{1}-k_{0}}\left(t\right) & \mbox{ if }t\geq k_{1}-k_{0}
\end{cases}
\]
In the first place we prove that $\underline{c}^{k_{1}-k_{0}}\in\Lambda\left(k_{1}\right)$,
showing that 
\begin{equation}
\underline{k}:=k\left(\cdot;k_{1};\underline{c}^{k_{1}-k_{0}}\right)>k\left(\cdot;k_{0},c^{k_{1}-k_{0}}\right)=:k\label{orbite in sec Lemma}
\end{equation}
over $\left(0,+\infty\right)$. Suppose by contradiction that this
is not true, and take $\tau:=\inf\left\{ t>0/\underline{k}\left(t\right)\leq k\left(t\right)\right\} $.
Then by the continuity of the orbits, $\underline{k}\left(\tau\right)\leq k\left(\tau\right)$,
which implies $\tau>0$. Considering the orbits as solutions to an
integral equation we have:
\[
k\left(\tau\right)=k_{0}+\int_{0}^{\tau}F\left(k\left(t\right)\right)\mbox{d}t-\int_{0}^{\tau}c^{k_{1}-k_{0}}\left(t\right)\mbox{d}t
\]
\[
\underline{k}\left(\tau\right)=k_{1}+\int_{0}^{\tau}F\left(\underline{k}\left(t\right)\right)\mbox{d}t-\int_{0}^{\tau}c^{k_{1}-k_{0}}\left(t\right)\mbox{d}t-\min\left\{ \tau,k_{1}-k_{0}\right\} .
\]
Hence
\begin{eqnarray*}
0\geq\underline{k}\left(\tau\right)-k\left(\tau\right) & = & k_{1}-k_{0}+\int_{0}^{\tau}\left[F\left(\underline{k}\left(t\right)\right)-F\left(k\left(t\right)\right)\right]\mbox{d}t-\min\left\{ \tau,k_{1}-k_{0}\right\} \\
 & \geq & \int_{0}^{\tau}\left[F\left(\underline{k}\left(t\right)\right)-F\left(k\left(t\right)\right)\right]\mbox{d}t
\end{eqnarray*}
By the definition of $\tau$ and the strict monotonicity of $F$,
this quantity must be strictly positive, which is absurd. Hence
\begin{align*}
 & k\left(\cdot;k_{1};\underline{c}^{k_{1}-k_{0}}\right)>k\left(\cdot;k_{0},c^{k_{1}-k_{0}}\right)\geq0\mbox{ in }\left[0,+\infty\right)\\
 & \underline{c}^{k_{1}-k_{0}}\geq c^{k_{1}-k_{0}}\geq0\mbox{ a.e. in }\left[0,+\infty\right)
\end{align*}
which implies $\underline{c}^{k_{1}-k_{0}}\in\Lambda\left(k_{0}\right)$.

In the second place, remembering the properties of $c^{k_{1}-k_{0}}$
given by Lemma $\ref{controlli limitati e meglio}$, we have 
\begin{eqnarray*}
U\left(\underline{c}^{k_{1}-k_{0}};k_{1}\right)-U\left(c;k_{0}\right) & \geq & U\left(\underline{c}^{k_{1}-k_{0}};k_{1}\right)-U\left(c^{k_{1}-k_{0}};k_{0}\right)\\
 & = & \int_{0}^{k_{1}-k_{0}}e^{-\rho t}\left[u\left(c^{k_{1}-k_{0}}\left(t\right)+1\right)-u\left(c^{k_{1}-k_{0}}\left(t\right)\right)\right]\mbox{d}t\\
 & \geq & \int_{0}^{k_{1}-k_{0}}e^{-\rho t}u'\left(c^{k_{1}-k_{0}}\left(t\right)+1\right)\mbox{d}t\\
 & \geq & u'\left(N\left(k_{0},k_{1}-k_{0}\right)+1\right)\int_{0}^{k_{1}-k_{0}}e^{-\rho t}\mbox{d}t
\end{eqnarray*}
which concludes the proof.\end{proof}
\begin{rem}
In the previous Lemma, the property $\eqref{orbite in sec Lemma}$
can also be proved with the ``comparison technique'', like we did
for the admissibility of $c^{T}$ in Lemma $\ref{controlli limitati e meglio}$. 

More generally, it can be proved that
\[
k\left(\cdot;k_{1},c_{H}\right)>k\left(\cdot;k_{0},c\right)
\]
where $k_{1}>k_{0}\geq0$, $c\in\mathcal{L}_{loc}^{1}\left(\left[0,+\infty\right),\mathbb{R}\right)$
and
\[
c_{H}\left(t\right):=\begin{cases}
c\left(t\right)+H & \mbox{ if }t\in\left[0,\delta_{H}\right)\\
c\left(t\right) & \mbox{ if }t\geq\delta_{H}
\end{cases}
\]
and $\delta_{H}>0$ satisfying $\delta_{H}\cdot H\leq k_{1}-k_{0}$. 

Indeed, set $k_{H}:=k\left(\cdot;k_{1},c_{H}\right)$ and $k:=k\left(\cdot;k_{0},c\right)$
and suppose by contradiction that $-\infty<\inf\left\{ t>0/k_{H}\left(t\right)\leq k\left(t\right)\right\} =:\tau$.
Then for a suitable, positive continuous function $h:\left[0,+\infty\right)\to\mathbb{R}$,
the following equality holds:
\begin{eqnarray*}
k_{H}\left(\tau\right)-k\left(\tau\right) & = & e^{\int_{0}^{\tau}h}\left[k_{1}-k_{0}+\int_{0}^{\tau}\left(c\left(s\right)-c_{H}\left(s\right)\right)e^{-\int_{0}^{s}h}\mbox{d}s\right].
\end{eqnarray*}
Moreover $\tau\leq\delta_{H}$, because on the contrary by definition
of infimum we would have $k_{H}>k$ in $\left[0,\delta_{H}\right]$;
then remembering $\eqref{eq: comp ODE strong}$ and the definition
of $c_{H}$ we would conclude that $k_{H}>k$ everywhere in $\left[0,+\infty\right)$,
which contradicts $\tau>-\infty$. Then the above equality implies
\[
k_{H}\left(\tau\right)-k\left(\tau\right)\,>\, k_{1}-k_{0}-H\int_{0}^{\tau}e^{-\int_{0}^{s}h}\mbox{d}s\,>\, k_{1}-k_{0}-\tau H\,\geq\, k_{1}-k_{0}-\delta_{H}H\,\geq0.
\]
At the same time $k_{H}\left(\tau\right)\leq k\left(\tau\right)$
by the continuity of $k_{h}$ and $k$ and by definition of infimum
(in fact the equality holds, again by continuity); hence we have reached
the desired contradiction.
\end{rem}
Now we state a simple characterisation of the admissible constant
controls.
\begin{prop}
\label{caratt controlli ammiss costanti}Let $k_{0},c\geq0.$ Then

i) $k\left(\cdot;k_{0},F\left(k_{0}\right)\right)\equiv k_{0}$

ii) the function constantly equal to $c$ is admissible at $k_{0}$
(which we write $c\in\Lambda\left(k_{0}\right)$) if, and only if
\[
c\in\left[0,F\left(k_{0}\right)\right].
\]
In particular the null function is admissible at any initial state
$k_{0}\geq0$.\end{prop}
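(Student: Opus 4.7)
For part (i), the plan is to verify directly that the constant function $k \equiv k_0$ solves the Cauchy problem \eqref{eq di stato} with $c \equiv F(k_0)$: the initial condition is obvious, and $\dot k(t) = 0 = F(k_0) - F(k_0) = F(k(t)) - c(t)$. By the global uniqueness of solutions established in Remark \ref{F lip vero con esistenza} (via the contraction-mapping argument relying on the Lipschitz continuity of $F$ on $[0,+\infty)$), this constant function must be $k(\cdot;k_0,F(k_0))$.

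For the ``if'' direction of (ii), suppose $c \in [0, F(k_0)]$. I would apply the comparison result \eqref{eq: comp ODE weak} in Remark \ref{Lemma Comp ODE} with $T_0 = 0$, $T_1 = +\infty$, $k_1 = k_2 = k_0$, $c_1 \equiv c$, and $c_2 \equiv F(k_0)$; since $c_1 \leq c_2$ a.e. and the two orbits coincide at $t=0$, the comparison yields
\[
k(t; k_0, c) \;\geq\; k(t; k_0, F(k_0)) \;=\; k_0 \;\geq\; 0 \qquad \forall t \geq 0,
\]
where the equality uses part (i). Together with $c \geq 0$, this gives $c \in \Lambda(k_0)$.

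For the ``only if'' direction, I would argue by contradiction: assume the constant control $c$ is admissible but $c > F(k_0)$. Applying the comparison \eqref{eq: comp ODE weak} in the opposite configuration (now $c_1 \equiv F(k_0) \leq c_2 \equiv c$, both starting at $k_0$) combined with part (i) gives $k(t; k_0, c) \leq k_0$ for all $t \geq 0$, hence by strict monotonicity of $F$ we get $F(k(t; k_0, c)) \leq F(k_0)$ for all $t \geq 0$. Integrating the state equation then yields
\[
k(t; k_0, c) \;=\; k_0 + \int_0^t \bigl[F(k(s;k_0,c)) - c\bigr]\,\mathrm{d}s \;\leq\; k_0 + (F(k_0) - c)\, t,
\]
and since $F(k_0) - c < 0$ the right-hand side becomes strictly negative for $t$ large enough, contradicting the state constraint $k(\cdot;k_0,c) \geq 0$ in the definition of $\Lambda(k_0)$. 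The final clause about the null function is then the special case $c = 0 \in [0, F(k_0)]$, which is available for any $k_0 \geq 0$ because $F(0) = 0$ and $F$ is increasing. No step is really an ``obstacle'' here; the only place where some care is needed is in the necessity part, to make sure one invokes the comparison lemma in the correct direction and exploits the monotonicity of $F$ to get a uniform negative drift.
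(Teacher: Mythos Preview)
Your proof is correct, but the route differs from the paper's. The paper argues both directions of (ii) by analyzing the sign of $\dot{k}(t)=F(k(t))-c$ directly: for $c<F(k_0)$ it shows $\dot{k}(0)>0$, then uses continuity of $\dot{k}$ and strict monotonicity of $F$ to push the interval of strict increase indefinitely (an ``and so on'' argument), while for $c>F(k_0)$ it shows analogously that $\dot{k}<F(k_0)-c<0$ on ever-larger intervals, forcing $k(t)\to-\infty$. You instead make the comparison result \eqref{eq: comp ODE weak} do all the work: for sufficiency you compare with the constant orbit $k\equiv k_0$ from part (i) to get $k(t;k_0,c)\geq k_0$ in one stroke; for necessity you compare in the other direction to get $k(t;k_0,c)\leq k_0$, then use monotonicity of $F$ on $(-\infty,k_0]$ to bound the drift by the negative constant $F(k_0)-c$ and integrate. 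Your approach is more concise and avoids the slightly informal interval-extension step in the paper; the paper's argument, on the other hand, yields a bit more information (strict monotonicity of the orbit) without invoking the comparison lemma.
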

\begin{proof}
i) By the uniqueness of the orbit.

ii)$\left(\Longleftarrow\right)$ In the first place, observe that
$F\left(k_{0}\right)\in\Lambda\left(k_{0}\right)$, by i). In the
second place, assume $c\in\left[0,F\left(k_{0}\right)\right)$ and
set $k:=k\left(\cdot;k_{0},c\right)$. Hence
\[
\dot{k}\left(0\right)=F\left(k_{0}\right)-c>0
\]
which means, by the continuity of $\dot{k}$, that we can find $\delta>0$
such that $k$ is strictly increasing in $\left[0,\delta\right]$.
In particular $\dot{k}\left(\delta\right)=F\left(k\left(\delta\right)\right)-c>F\left(k_{0}\right)-c$
because $F$ is strictly increasing too. By the fact that $\dot{k}\left(\delta\right)>0$
we see that there exists $\hat{\delta}>\delta$ such that $k$ is
strictly increasing in $\left[0,\hat{\delta}\right]$ - and so on.
Hence $k$ is strictly increasing in $\left[0,+\infty\right)$ and
in particular $k\geq0$. This shows that $c\in\Lambda\left(k_{0}\right)$.

$\left(\Longrightarrow\right)$ Suppose that $c>F\left(k_{0}\right)$
and set again $k:=k\left(\cdot;k_{0},c\right)$. Then 
\[
\dot{k}\left(0\right)=F\left(k_{0}\right)-c<0
\]
 so that we can find $\delta>0$ such that $k$ is strictly decreasing
in $\left[0,\delta\right]$, and $\dot{k}\left(\delta\right)=F\left(k\left(\delta\right)\right)-c<F\left(k_{0}\right)-c<0$.
Hence one can arbitrarily extend the neighbourhood of $0$ in which
$\dot{k}$ is strictly less than the strictly negative constant $F\left(k_{0}\right)-c$,
which implies that
\[
\lim_{t\to+\infty}k\left(t\right)=-\infty.
\]
Hence $k$ cannot be everywhere-positive and $c\notin\Lambda\left(k_{0}\right)$. \end{proof}
\begin{cor}
\label{ammissibili crescenti}The set sequence \textup{$\left(\Lambda\left(k\right)\right)_{k\geq0}$}
is strictly increasing, that is:
\[
\Lambda\left(k_{0}\right)\subsetneq\Lambda\left(k_{1}\right)
\]

for every $0\leq k_{0}<k_{1}$.\end{cor}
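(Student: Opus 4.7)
The plan is to handle the two parts of the claim separately, each reducing to a result that is already in the excerpt.

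\textbf{Inclusion $\Lambda(k_0)\subseteq\Lambda(k_1)$.} I would fix $c\in\Lambda(k_0)$, set $k_i(\cdot):=k(\cdot;k_i,c)$ for $i=0,1$, and apply the strong comparison principle (\ref{eq: comp ODE strong}) with $c_1=c_2=c$, $T_0=0$, $T_1=+\infty$. Since $k_1>k_0$, that gives $k_1(t)>k_0(t)\geq 0$ for every $t\geq 0$, and $c\geq 0$ almost everywhere is already true, so $c\in\Lambda(k_1)$.

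\textbf{Strictness.} I would exhibit an explicit constant control separating the two sets. Take $\tilde c:\equiv F(k_1)$. By Proposition \ref{caratt controlli ammiss costanti}.ii, $\tilde c\in\Lambda(k_1)$. On the other hand, $F$ is strictly increasing, hence $F(k_1)>F(k_0)$, so again by Proposition \ref{caratt controlli ammiss costanti}.ii, $\tilde c\notin\Lambda(k_0)$. Therefore $\Lambda(k_1)\setminus\Lambda(k_0)\neq\emptyset$.

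I do not expect any real obstacle here: the corollary is a direct packaging of Remark \ref{Lemma Comp ODE} (strict monotone dependence on the initial datum of ODE solutions) and the characterization of admissible constant controls. The only point to be slightly careful about is the boundary case $k_0=0$, but it is handled uniformly — the strong comparison still yields $k_1(t)>k_0(t)\geq 0$ from $k_1>0=k_0$ at time zero, and the separating constant $\tilde c=F(k_1)>0=F(0)$ lies outside $\Lambda(0)$ by the same proposition.
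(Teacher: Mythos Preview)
Your proposal is correct and follows essentially the same route as the paper: inclusion via the comparison principle (the paper cites \eqref{eq: comp ODE weak}, you use the strong form \eqref{eq: comp ODE strong}, which is in fact the appropriate one here since the initial data differ), and strictness via a constant control using Proposition~\ref{caratt controlli ammiss costanti}. The paper picks any $\hat c\equiv F(\hat k)$ with $\hat k\in(k_0,k_1]$ as the separating control, of which your choice $\tilde c\equiv F(k_1)$ is the endpoint case.
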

\begin{proof}
For every $c\in\Lambda\left(k_{0}\right)$, $k\left(\cdot;k_{0},c\right)\leq k\left(\cdot;k_{1},c\right)$
by $\ref{eq: comp ODE weak}$, which implies the second orbit being
positive, and so $c\in\Lambda\left(k_{1}\right)$. 

On the other hand, by Proposition $\ref{caratt controlli ammiss costanti}$
and by the strict monotonicity of $F$, the constant control $\hat{c}\equiv F\left(\hat{k}\right)$
belongs to $\Lambda\left(k_{1}\right)\setminus\Lambda\left(k_{0}\right)$
for any $\hat{k}\in\left(k_{0},k_{1}\right]$. 
\end{proof}

\section{Basic qualitative properties of the value function}

Now we deal with the first problem one has to solve in order to develop
the theory: the finiteness of the value function. We start setting
a result which is analogous to the one we cleared up in Remark $\ref{F Lip}$,
and which also follows from a certain sublinearity property of the
production function $F$.
\begin{rem}
\label{F sublin 2}Set $M_{0},\hat{M}\geq0$ such that:
\begin{alignat*}{1}
 & \forall x\geq M_{0}:F\left(x\right)\leq\left(L+\epsilon_{0}\right)x\\
 & \hat{M}:=\max_{\left[0,M_{0}\right]}F.
\end{alignat*}
(which is possible because $\lim_{x\to+\infty}\frac{F\left(x\right)}{x}=L$).
Hence, for every $x\geq0$:
\[
F\left(x\right)\leq\left(L+\epsilon_{0}\right)x+\hat{M}
\]

\end{rem}
$\,$
\begin{rem}
\label{remark u concava}Since $u$ is a concave function satisfying
$u\left(0\right)=0$, $u$ is sub-additive in $\left[0,+\infty\right)$
and satisfies:
\[
\forall x>0:\forall K>1:u\left(Kx\right)\leq Ku\left(x\right)
\]
\end{rem}
\begin{lem}
\label{funzionale finito}Let $k_{0}\geq0$, and $c\in\Lambda\left(k_{0}\right)$.
Hence, setting $M\left(k_{0}\right):=1+\max\left\{ \left(L+\epsilon_{0}\right)k_{0},\hat{M}\right\} $:
\begin{align*}
i)\quad & \forall t\geq0:\int_{0}^{t}c\left(s\right)\mbox{d}s\leq tM\left(k_{0}\right)\left[1+e^{\left(L+\epsilon_{0}\right)t}\right]+\frac{M\left(k_{0}\right)}{L+\epsilon_{0}}\\
ii)\quad & \lim_{t\to+\infty}e^{-\rho t}\int_{0}^{t}u\left(c\left(s\right)\right)\mbox{d}s=0\\
iii)\quad & U\left(c;k_{0}\right)=\rho\int_{0}^{+\infty}e^{-\rho t}\int_{0}^{t}u\left(c\left(s\right)\right)\mbox{d}s\mbox{d}t\leq\gamma\left(k_{0}\right)
\end{align*}
where $\gamma\left(k_{0}\right)$ is a finite quantity depending on
$k_{0}$ and on the problem's data.\end{lem}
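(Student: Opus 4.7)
The plan is to proceed in the natural order (i) $\to$ (ii) $\to$ (iii), each part feeding into the next, with the hypothesis \eqref{assumption u} on $u$ being pulled out only at the very end.

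For (i), I would integrate the state equation from $0$ to $t$. Since $k(\cdot;k_0,c)\geq 0$ and $c\geq 0$, this gives
\[
\int_0^{t}c(s)\,\mbox{d}s \;=\; k_0-k(t;k_0,c)+\int_0^{t}F(k(s;k_0,c))\,\mbox{d}s \;\leq\; k_0+\int_0^{t}F(k(s;k_0,c))\,\mbox{d}s .
\]
Applying Remark \ref{F sublin 2} to turn $F$ into an affine upper bound with slope $L+\epsilon_{0}$, a direct use of Gronwall on the differential inequality $\dot{k}\leq (L+\epsilon_{0})k+\hat{M}$ produces an exponential majorant for $k(s;k_0,c)$ on $[0,t]$. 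Re-integrating and reorganizing the constants according to the definition of $M(k_0)$ yields the stated bound. This is purely mechanical once the sublinearity bound is in hand.

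For (ii), the key is Jensen's inequality applied to the concave function $u$:
\[
\int_0^{t}u(c(s))\,\mbox{d}s \;\leq\; t\,u\!\left(\frac{1}{t}\int_0^{t}c(s)\,\mbox{d}s\right) .
\]
By (i), the argument of $u$ on the right is bounded by a constant $C(k_0)$ times $1+e^{(L+\epsilon_{0})t}$, which for $t$ large enough is at most $C'(k_0)e^{(L+\epsilon_{0})t}$ with $C'(k_0)>1$. Then Remark \ref{remark u concava} lets me extract this multiplicative factor: $u(C'(k_0)e^{(L+\epsilon_{0})t})\leq C'(k_0)u(e^{(L+\epsilon_{0})t})$. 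Putting the pieces together,
\[
e^{-\rho t}\int_0^{t}u(c(s))\,\mbox{d}s \;\leq\; C'(k_0)\,t\,e^{-\rho t}u\!\left(e^{(L+\epsilon_{0})t}\right),
\]
and the last expression tends to $0$ thanks to the third line of \eqref{assumption u}: the strict gap $\epsilon_{0}$ in the exponent absorbs the polynomial prefactor $t$.

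For (iii), I would first rewrite the functional via integration by parts (equivalently, Fubini). Letting $\Phi(t):=\int_{0}^{t}u(c(s))\,\mbox{d}s$,
\[
U(c;k_0)=\int_{0}^{+\infty}e^{-\rho t}\Phi'(t)\,\mbox{d}t
=\Bigl[e^{-\rho t}\Phi(t)\Bigr]_{0}^{+\infty}+\rho\int_{0}^{+\infty}e^{-\rho t}\Phi(t)\,\mbox{d}t
=\rho\int_{0}^{+\infty}e^{-\rho t}\Phi(t)\,\mbox{d}t,
\]
where the boundary term at $0$ is trivial and the one at $+\infty$ vanishes exactly by (ii). To get the finite majorant $\gamma(k_0)$, I would dominate the integrand with the same estimate used in (ii), obtaining a multiple of $t^{2}e^{-\rho t}u(e^{(L+\epsilon_{0})t})$; integrability of this quantity on $[0,+\infty)$ follows from Remark \ref{remark 1} (the extra $t$ is again absorbed by the $\epsilon_{0}$-margin, or one can cite the second display of Remark \ref{remark 1} after a crude bound $t\leq e^{\epsilon_0 t/2}$ for large $t$).

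The main obstacle is step (ii): getting the exponential bound on $\frac{1}{t}\int_{0}^{t}c$ out of Jensen, then cleanly pushing it through $u$ via the sub-homogeneity bound of Remark \ref{remark u concava}, while keeping enough room in the exponent that the hypothesis $e^{\epsilon_{0}t}e^{-\rho t}u(e^{(L+\epsilon_{0})t})\to 0$ eats all the polynomial factors that accumulate from the Gronwall estimate and the integration-by-parts in (iii).
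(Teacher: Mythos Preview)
Your approach is correct and matches the paper's almost line for line: Gronwall on the sublinear bound $F(x)\leq (L+\epsilon_0)x+M(k_0)$ for (i), Jensen plus the sub-homogeneity $u(Kx)\leq Ku(x)$ for (ii), and integration by parts together with (ii) and Remark~\ref{remark 1} for (iii); the only stylistic difference is that in (ii) the paper splits the argument of $u$ into three pieces by subadditivity, whereas you absorb everything into a single $C'(k_0)e^{(L+\epsilon_0)t}$. One harmless slip: in (iii) the integrand $e^{-\rho t}\Phi(t)$ is dominated by a multiple of $t\,e^{-\rho t}u(e^{(L+\epsilon_0)t})$, not $t^{2}$, so Remark~\ref{remark 1} applies directly without any further absorption of polynomial factors.
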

\begin{proof}
i) Set $\kappa:=k\left(\cdot;k_{0},c\right)$ and $M\left(k_{0}\right)$
as in the hypotheses. Observe that, by Remark $\ref{F sublin 2}$,
for every $x\geq0$:
\[
F\left(x\right)\leq\left(L+\epsilon_{0}\right)x+M\left(k_{0}\right).
\]
Fix $t\geq0$; by the state equation, we have for any $s\in\left[0,t\right]$
\[
\kappa\left(s\right)\leq k_{0}+sM\left(k_{0}\right)+\left(L+\epsilon_{0}\right)\int_{0}^{s}\kappa\left(\tau\right)\mbox{d}\tau
\]
which implies by Gronwall's inequality:
\[
\kappa\left(s\right)\leq\left[k_{0}+sM\left(k_{0}\right)\right]e^{\left(L+\epsilon_{0}\right)s}\quad\forall s\in\left[0,t\right],
\]
as $s\to k_{0}+sM\left(k_{0}\right)$ is increasing. So
\begin{eqnarray*}
\int_{0}^{t}\left(L+\epsilon_{0}\right)\kappa\left(s\right)\mbox{d}s & \leq & k_{0}\left(L+\epsilon_{0}\right)\int_{0}^{t}e^{\left(L+\epsilon_{0}\right)s}\mbox{d}s+M\left(k_{0}\right)\left(L+\epsilon_{0}\right)\int_{0}^{t}s\cdot e^{\left(L+\epsilon_{0}\right)s}\mbox{d}s\\
 & = & k_{0}e^{\left(L+\epsilon_{0}\right)t}-k_{0}+tM\left(k_{0}\right)e^{\left(L+\epsilon_{0}\right)t}-\frac{M\left(k_{0}\right)}{\left(L+\epsilon_{0}\right)}e^{\left(L+\epsilon_{0}\right)t}+\frac{M\left(k_{0}\right)}{\left(L+\epsilon_{0}\right)}\\
 & = & tM\left(k_{0}\right)e^{\left(L+\epsilon_{0}\right)t}+\left[k_{0}-\frac{M\left(k_{0}\right)}{\left(L+\epsilon_{0}\right)}\right]e^{\left(L+\epsilon_{0}\right)t}+\frac{M\left(k_{0}\right)}{\left(L+\epsilon_{0}\right)}-k_{0}\\
 & \leq & tM\left(k_{0}\right)e^{\left(L+\epsilon_{0}\right)t}+\frac{M\left(k_{0}\right)}{\left(L+\epsilon_{0}\right)}-k_{0}
\end{eqnarray*}
Hence, again by the state equation, for every $t\geq0$:
\begin{eqnarray*}
\int_{0}^{t}c\left(s\right)\mbox{d}s & = & k_{0}-\kappa\left(t\right)+\int_{0}^{t}F\left(\kappa\left(s\right)\right)\mbox{d}s\\
 & \leq & k_{0}+tM\left(k_{0}\right)+\int_{0}^{t}\left(L+\epsilon_{0}\right)\kappa\left(s\right)\mbox{d}s\,\leq\, tM\left(k_{0}\right)\left[1+e^{\left(L+\epsilon_{0}\right)t}\right]+\frac{M\left(k_{0}\right)}{\left(L+\epsilon_{0}\right)}.
\end{eqnarray*}
which proves the first assertion.

ii) In the second place, it follows by Jensen inequality, the monotonicity
of $u$ and Remark $\ref{remark u concava}$, that for every $t\geq0$:
\begin{eqnarray*}
0\leq e^{-\rho t}\int_{0}^{t}u\left(c\left(s\right)\right)\mbox{d}s\hspace{-2mm} & \leq & \hspace{-2mm}te^{-\rho t}u\left(\frac{\int_{0}^{t}c\left(s\right)\mbox{d}s}{t}\right)\,\leq\, te^{-\rho t}u\left(M\left(k_{0}\right)\left[1+e^{\left(L+\epsilon_{0}\right)t}\right]+\frac{M\left(k_{0}\right)}{t\left(L+\epsilon_{0}\right)}\right)\\
 & \leq & \hspace{-2mm}te^{-\rho t}\Biggl\{ u\left(M\left(k_{0}\right)\right)+M\left(k_{0}\right)u\left(e^{\left(L+\epsilon_{0}\right)t}\right)+u\left(\frac{M\left(k_{0}\right)}{t\left(L+\epsilon_{0}\right)}\right)\Biggl\};
\end{eqnarray*}
observe that this quantity tends to $0$ as $t\to+\infty$, particulary
by the last condition assumed in $\eqref{assumption u}$ about $u$;
so also the second claim is proven.

Finally, integrating by parts, and using ii)
\begin{eqnarray*}
U\left(c;k_{0}\right) & = & \int_{0}^{+\infty}e^{-\rho t}u\left(c\left(t\right)\right)\mbox{d}t\\
 & = & \lim_{T\to+\infty}\Biggl\{ e^{-\rho T}\int_{0}^{T}u\left(c\left(s\right)\right)\mbox{d}s+\rho\int_{0}^{T}e^{-\rho t}\int_{0}^{t}u\left(c\left(s\right)\right)\mbox{d}s\mbox{d}t\Biggl\}\\
 & = & \rho\int_{0}^{+\infty}e^{-\rho t}\int_{0}^{t}u\left(c\left(s\right)\right)\mbox{d}s\mbox{d}t\\
 & \leq & \rho\int_{0}^{+\infty}te^{-\rho t}\Biggl\{ u\left(M\left(k_{0}\right)\right)+M\left(k_{0}\right)u\left(e^{\left(L+\epsilon_{0}\right)t}\right)+u\left(\frac{M\left(k_{0}\right)}{t\left(L+\epsilon_{0}\right)}\right)\Biggl\}\mbox{d}t\\
 & \leq & \rho u\left(M\left(k_{0}\right)\right)\int_{0}^{+\infty}te^{-\rho t}\mbox{d}t+\rho M\left(k_{0}\right)\int_{0}^{+\infty}te^{-\rho t}u\left(e^{\left(L+\epsilon_{0}\right)t}\right)\mbox{d}t\\
 &  & +\rho u\left(\frac{M\left(k_{0}\right)}{L+\epsilon_{0}}\right)\left\{ \int_{0}^{1}e^{-\rho t}\mbox{d}t+\int_{1}^{+\infty}te^{-\rho t}\mbox{d}t\right\} 
\end{eqnarray*}
Now it is sufficient to observe that by Remark $\ref{remark 1}$ this
upper bound is finite and set it equal to $\gamma\left(k_{0}\right)$.
\end{proof}
Hence we have established the starting point of the theory.
\begin{cor}
The value function $V:\left[0,+\infty\right)\to\mathbb{R}$ is well-definite;
that is, for every $k_{0}\geq0$, $V\left(k_{0}\right)<+\infty$.\end{cor}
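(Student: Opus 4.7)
The plan is to obtain the corollary as an immediate consequence of Lemma $\ref{funzionale finito}$.iii) by passing to the supremum. Fix any $k_{0}\geq0$. The lemma asserts that for every admissible control $c\in\Lambda(k_{0})$ one has $U(c;k_{0})\leq\gamma(k_{0})$, where $\gamma(k_{0})$ is a finite quantity depending only on $k_{0}$ and on the data of the problem (it does not depend on the particular $c$). Since this bound is uniform over $\Lambda(k_{0})$, taking the supremum gives
\[
V(k_{0})\,=\,\sup_{c\in\Lambda(k_{0})}U(c;k_{0})\,\leq\,\gamma(k_{0})\,<\,+\infty,
\]
which is exactly the assertion. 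The case $k_{0}=0$ needs no separate treatment: Lemma $\ref{funzionale finito}$ is stated for $k_{0}\geq0$ and its bound remains finite when $k_{0}=0$ since $M(0)=1+\hat{M}$ is still a finite constant and $\gamma(0)$ is defined by the same formula.

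There is no genuine obstacle in this step; the real work has already been done upstream. The key ingredients producing $\gamma(k_{0})$ are: (i) the Gronwall-type control on $\int_{0}^{t}c(s)\,\mathrm{d}s$ in Lemma $\ref{funzionale finito}$.i), which follows from Remark $\ref{F sublin 2}$ and uses only the state equation plus the nonnegativity of the orbit; (ii) the subadditivity and $K$-subhomogeneity of the concave function $u$ with $u(0)=0$ stated in Remark $\ref{remark u concava}$, which allow the estimate on $u\left(\int_{0}^{t}c/t\right)$ to be split into three manageable pieces; (iii) the integrability conditions gathered in Remark $\ref{remark 1}$, which are themselves a direct consequence of the asymptotic hypothesis $e^{\epsilon_{0}t}e^{-\rho t}u\left(e^{(L+\epsilon_{0})t}\right)\to0$ built into $\eqref{assumption u}$. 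Once these three inputs are in place, the proof of the corollary reduces to a single line.
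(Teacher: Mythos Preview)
Your proof is correct and follows exactly the same route as the paper: fix $k_{0}\geq0$, invoke the uniform bound $U(c;k_{0})\leq\gamma(k_{0})$ from Lemma~\ref{funzionale finito}.iii), and take the supremum over $\Lambda(k_{0})$. The additional commentary on the $k_{0}=0$ case and on the upstream ingredients is accurate but not needed for the corollary itself.
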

\begin{proof}
Take $k_{0}\geq0$ and set $\gamma\left(k_{0}\right)$ as in Lemma
$\ref{funzionale finito}$. Hence:
\[
V\left(k_{0}\right)=\sup_{c\in\Lambda\left(k_{0}\right)}U\left(c;k_{0}\right)\leq\gamma\left(k_{0}\right)<+\infty.
\]

\end{proof}
Next, we prove directly some useful asymptotic properties of the value
function.
\begin{thm}[\textbf{Asymptotic properties of the value function} ]
\label{Basic Prop V}The value function $V:\left[0,+\infty\right)\to\mathbb{R}$
satisfies:
\begin{eqnarray*}
i) &  & \lim_{k\to+\infty}V\left(k\right)=+\infty\\
ii) &  & \lim_{k\to+\infty}\frac{V\left(k\right)}{k}=0\\
iii) &  & \lim_{k\to0}V\left(k\right)=V\left(0\right)=0
\end{eqnarray*}
\end{thm}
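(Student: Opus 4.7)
The plan is to treat the three limits separately, handling (i) and (iii) with direct splitting arguments and reserving the real work for (ii).

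For (i) I exhibit the constant control $c\equiv F(k_0)$, which by Proposition~\ref{caratt controlli ammiss costanti} belongs to $\Lambda(k_0)$ and keeps the orbit at $k_0$; hence $V(k_0)\geq u(F(k_0))/\rho$, and both $F(k_0)\to+\infty$ and $u(x)\to+\infty$ by hypothesis give the result.

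For (iii) I first observe that $V$ is nondecreasing: by Corollary~\ref{ammissibili crescenti}, $\Lambda(k_0)\subseteq\Lambda(k_1)$ for $k_0\leq k_1$, and since the integrand in $U(c;k_0)$ does not depend on $k_0$, we obtain $V(k_0)\leq V(k_1)$. The value $V(0)=0$ is forced by Remark~\ref{prime stime Ic}: $k(\cdot;0,c)\equiv 0$ and the state equation give $c\equiv 0$ almost everywhere, so $\Lambda(0)=\{0\}$. For continuity at $0^+$, fix $k_0\in(0,1]$ and $c\in\Lambda(k_0)$, and decompose $U(c;k_0)=\int_0^T+\int_T^{+\infty}$. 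Jensen's inequality together with Remark~\ref{prime stime Ic} bounds the first piece by $T\,u\!\bigl(k_0 e^{\overline{M}T}/T\bigr)$, which vanishes as $k_0\to 0^+$ since $u$ is continuous at $0$ with $u(0)=0$. For the tail I use a time shift: $\tilde c(s):=c(s+T)$ lies in $\Lambda(\tilde k_0)$ with $\tilde k_0:=k(T;k_0,c)\leq k_0 e^{\overline{M}T}$, so the tail equals $e^{-\rho T}U(\tilde c;\tilde k_0)\leq e^{-\rho T}V(k_0 e^{\overline{M}T})\leq e^{-\rho T}V(1)$ once $k_0\leq e^{-\overline{M}T}$. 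Given $\eta>0$, first pick $T$ with $e^{-\rho T}V(1)<\eta/2$, then $k_0$ small enough that the first piece is below $\eta/2$.

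Part (ii) is where I expect the main obstacle, because the bound $V(k_0)\leq\gamma(k_0)$ furnished by Lemma~\ref{funzionale finito} is only $O(k_0)$ (through the term $\rho M(k_0)\int t\,e^{-\rho t}u(e^{(L+\epsilon_0)t})\,dt$), so a finer estimate is required. I restart from the Fubini--Jensen inequality
\[
U(c;k_0)\leq \rho\int_0^{+\infty}t\,e^{-\rho t}u(\bar c_t)\,dt,\qquad \bar c_t:=\frac{1}{t}\int_0^t c(s)\,ds,
\]
and the Gronwall estimate $\bar c_t\leq (k_0/t+M(k_0))e^{(L+\epsilon_0)t}$ already obtained in the proof of Lemma~\ref{funzionale finito}. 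Writing $u(x)=x\,\phi(x)$ with $\phi(x):=u(x)/x$ positive, decreasing and $\phi(+\infty)=0$ (the last coming from $u'(+\infty)=0$), I split the integral at $t=1$. For $k_0\geq 1$ and $t\geq 1$ the Gronwall bound simplifies to $\bar c_t\leq Ck_0 e^{(L+\epsilon_0)t}$ for a suitable constant $C\geq 1$, which yields
\[
\frac{u(\bar c_t)}{k_0}\leq Ce^{(L+\epsilon_0)t}\,\phi\!\bigl(Ck_0 e^{(L+\epsilon_0)t}\bigr)\leq u\!\bigl(Ce^{(L+\epsilon_0)t}\bigr)\leq C\,u(e^{(L+\epsilon_0)t}),
\]
the first inequality via monotonicity of $\phi$ and $k_0\geq 1$, the last via Remark~\ref{remark u concava}; this dominating function is integrable against $te^{-\rho t}$ by Remark~\ref{remark 1}, and the original integrand tends to $0$ pointwise. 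For $t\in(0,1]$ the same bound reduces to $\bar c_t\leq Ck_0/t$, giving $u(\bar c_t)/k_0\leq u(C/t)$ by the same factor-out step; since $u(x)=o(x)$ at $+\infty$, $t\,u(C/t)$ is bounded on $(0,1]$, hence integrable against $e^{-\rho t}$. Two applications of dominated convergence deliver $U(c;k_0)/k_0\to 0$ uniformly in $c\in\Lambda(k_0)$, whence $V(k_0)/k_0\to 0$.
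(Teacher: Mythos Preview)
Your proof is correct. Part (i) is identical to the paper's argument.

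For part (ii) both you and the paper start from the same Fubini--Jensen inequality $U(c;k_0)\le\rho\int_0^\infty t e^{-\rho t}u(\bar c_t)\,dt$ and the Gronwall-type bound of Lemma~\ref{funzionale finito}, but then diverge. The paper splits the integral at $1$ and at a large $N_\eta$, uses sub-additivity of $u$ to separate the summands in the bound for $\bar c_t$, arranges that the tail $[N_\eta,\infty)$ contributes at most $k_0\eta$, and shows the remaining pieces are $u(k_0)$ times a constant depending on $N_\eta$; the conclusion then follows from $u(k_0)/k_0\to 0$. You instead exploit directly that $\phi(x)=u(x)/x$ is decreasing with $\phi(+\infty)=0$: bounding $u(\bar c_t)/k_0$ by $Ce^{(L+\epsilon_0)t}\phi(Ck_0 e^{(L+\epsilon_0)t})$ on $[1,\infty)$ and by $(C/t)\phi(Ck_0/t)$ on $(0,1]$ gives $c$-independent integrands that are dominated (by $Cu(e^{(L+\epsilon_0)t})$ and by a constant, respectively) and tend pointwise to $0$, so dominated convergence finishes. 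The two arguments are morally the same---both hinge on $u(x)/x\to 0$---but yours packages the $\epsilon$-splitting into a single dominated-convergence step. One small remark: the Gronwall bound you quote is not literally the inequality in Lemma~\ref{funzionale finito}(i), but the two simplified forms you actually use ($\bar c_t\le Ck_0 e^{(L+\epsilon_0)t}$ for $t\ge 1$ and $\bar c_t\le Ck_0/t$ for $t\le 1$, both for large $k_0$) follow readily from it.

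For part (iii) your route is genuinely different and considerably shorter than the paper's. The paper builds a piecewise-linear majorant $G\ge F$, solves the comparison ODE $\dot h=G(h)$ explicitly, and tracks the resulting bounds on $\int_0^t c$ across the threshold time $\bar t$ where $h$ crosses $\bar k$; this costs a page of computation. Your argument needs only weak monotonicity of $V$ (immediate from Corollary~\ref{ammissibili crescenti}), finiteness of $V(1)$ (Lemma~\ref{funzionale finito}), the crude bound of Remark~\ref{prime stime Ic}, and the translation $c(\cdot+T)\in\Lambda(k(T;k_0,c))$. The trade-off is that the paper's approach yields explicit quantitative control of $V$ near $0$, while yours gives the limit more cheaply but less explicitly.
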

\begin{proof}
\textbf{i)} For every $k_{0}\geq0$ the constant control $F\left(k_{0}\right)$
is admissible at $k_{0}$ by Proposition $\ref{caratt controlli ammiss costanti}$;
hence
\[
V\left(k_{0}\right)\geq U\left(F\left(k_{0}\right);k_{0}\right)=\frac{u\left(F\left(k_{0}\right)\right)}{\rho}\to+\infty
\]
as $k_{0}\to+\infty$, by the assumptions on $u$ and $F$.\\

\textbf{ii)} Set $\hat{M}>0$ as in Remark $\ref{F sublin 2}$ and
$k_{0}>0$ such that:
\begin{equation}
k_{0}>\frac{1}{L+\epsilon_{0}}\hat{M}\label{condiz per V infty}
\end{equation}
Hence, for every $x>0$:
\begin{equation}
F\left(x\right)\leq\left(L+\epsilon_{0}\right)\left(x+k_{0}\right)\label{sublin per V infty}
\end{equation}

By reasons that will be clear later, suppose also that:
\begin{equation}
k_{0}>\frac{1}{L+\epsilon_{0}}\label{eq: ulteriore su k_0}
\end{equation}
Observe that the proof of Lemma $\ref{funzionale finito}$, i) does
not require $M\left(k_{0}\right)\geq1$, but only $M\left(k_{0}\right)\geq\hat{M}$;
hence $\eqref{condiz per V infty}$ and $\eqref{sublin per V infty}$
imply that the property in Lemma $\ref{funzionale finito}$, i) holds
for $M\left(k_{0}\right)=k_{0}\left(L+\epsilon_{0}\right)$ - which
means that:
\begin{equation}
\forall t\geq0:\int_{0}^{t}c\left(s\right)\mbox{d}s\leq k_{0}+tk_{0}\left(L+\epsilon_{0}\right)\left[1+e^{\left(L+\epsilon_{0}\right)t}\right].\label{Ic per V infty}
\end{equation}
In particular
\begin{equation}
\forall t\geq1:\frac{\int_{0}^{t}c\left(s\right)\mbox{d}s}{t}\leq k_{0}+k_{0}\left(L+\epsilon_{0}\right)+k_{0}\left(L+\epsilon_{0}\right)e^{\left(L+\epsilon_{0}\right)t}.\label{Ic/t per V infty}
\end{equation}
Now set 
\begin{equation}
J_{c}\left(\alpha,\beta\right):=\int_{\alpha}^{\beta}te^{-\rho t}u\left(\frac{\int_{0}^{t}c\left(s\right)\mbox{d}s}{t}\right)\mbox{d}t\label{def Jc}
\end{equation}

and fix $N>0$ .

We provide three different estimates, over $J_{c}\left(0,1\right)$,
$J_{c}\left(1,N\right)$ and $J_{c}\left(N,+\infty\right)$, using
Remark $\ref{remark u concava}$.

First, we have by $\eqref{Ic per V infty}$:
\begin{eqnarray*}
J_{c}\left(0,1\right) & \leq & \int_{0}^{1}te^{-\rho t}\frac{1}{t}u\left(\int_{0}^{1}c\left(s\right)\mbox{d}s\right)\mbox{d}t\\
 & \leq & u\left[k_{0}\left(1+\left(L+\epsilon_{0}\right)\left(1+e^{\left(L+\epsilon_{0}\right)}\right)\right)\right]\frac{1-e^{-\rho}}{\rho}\\
 & \leq & u\left(k_{0}\right)\frac{1-e^{-\rho}}{\rho}\left[1+\left(L+\epsilon_{0}\right)\left(1+e^{\left(L+\epsilon_{0}\right)}\right)\right].
\end{eqnarray*}
Moreover, by $\eqref{Ic/t per V infty}$:
\begin{eqnarray*}
J_{c}\left(1,N\right) & \leq & \int_{1}^{N}te^{-\rho t}u\left(k_{0}+k_{0}\left(L+\epsilon_{0}\right)+k_{0}\left(L+\epsilon_{0}\right)e^{\left(L+\epsilon_{0}\right)t}\right)\mbox{d}t\\
 & \leq & u\left(k_{0}+k_{0}\left(L+\epsilon_{0}\right)\right)\int_{1}^{N}te^{-\rho t}\mbox{d}t+u\left(k_{0}\left(L+\epsilon_{0}\right)\right)\int_{1}^{N}te^{-\rho t}e^{\left(L+\epsilon_{0}\right)t}\mbox{d}t\\
 & \leq & u\left[k_{0}\left(1+L+\epsilon_{0}\right)\right]\left(1+e^{\left(L+\epsilon_{0}\right)N}\right)\int_{1}^{N}te^{-\rho t}\mbox{d}t
\end{eqnarray*}
Finally, remembering that $k_{0}\left(L+\epsilon_{0}\right)>1$ by
$\eqref{eq: ulteriore su k_0}$,
\begin{eqnarray*}
J_{c}\left(N,+\infty\right) & \leq & \int_{N}^{+\infty}te^{-\rho t}u\left(k_{0}+k_{0}\left(L+\epsilon_{0}\right)+k_{0}\left(L+\epsilon_{0}\right)e^{\left(L+\epsilon_{0}\right)t}\right)\mbox{d}t\\
 & \leq & u\left(k_{0}+k_{0}\left(L+\epsilon_{0}\right)\right)\int_{N}^{+\infty}te^{-\rho t}\mbox{d}t+k_{0}\left(L+\epsilon_{0}\right)\int_{N}^{+\infty}te^{-\rho t}u\left(e^{\left(L+\epsilon_{0}\right)t}\right)\mbox{d}t
\end{eqnarray*}

Now we show that 
\[
\lim_{k\to+\infty}\frac{V\left(k\right)}{k}=0.
\]
Fix $\eta>0$; by Remark $\ref{remark 1}$, we can chose $N_{\eta}>0$
such that
\[
\left(L+\epsilon_{0}\right)\int_{N_{\eta}}^{+\infty}te^{-\rho t}u\left(e^{\left(L+\epsilon_{0}\right)t}\right)\mbox{d}t<\eta.
\]

Hence for $k_{0}$ satisfying:
\[
k_{0}>\max\left\{ \frac{1}{L+\epsilon_{0}}\hat{M},\frac{1}{L+\epsilon_{0}}\right\} 
\]
and for \emph{every} $c\in\Lambda\left(k_{0}\right)$, the above estimates
imply: 
\begin{eqnarray}
U\left(c;k_{0}\right) & = & \rho\int_{0}^{+\infty}e^{-\rho t}\int_{0}^{t}u\left(c\left(s\right)\right)\mbox{d}s\mbox{d}t\nonumber \\
 & \leq & \rho J_{c}\left(0,1\right)+\rho J_{c}\left(1,N_{\eta}\right)+\rho J_{c}\left(N_{\eta},+\infty\right)\nonumber \\
 & \leq & u\left(k_{0}\right)\left(1-e^{-\rho}\right)\left[1+\left(L+\epsilon_{0}\right)\left(e^{\left(L+\epsilon_{0}\right)}+1\right)\right]+\nonumber \\
 &  & +u\left(k_{0}\right)\left(1+L+\epsilon_{0}\right)\left(1+e^{\left(L+\epsilon_{0}\right)N_{\eta}}\right)\int_{1}^{N_{\eta}}te^{-\rho t}\mbox{d}t+\nonumber \\
 &  & +u\left(k_{0}\right)\left(1+L+\epsilon_{0}\right)\int_{N_{\eta}}^{+\infty}te^{-\rho t}\mbox{d}t+k_{0}\eta\label{eq: nuova prop V}
\end{eqnarray}
following Remark $\ref{remark u concava}$, Lemma $\ref{funzionale finito}$,
iii), $\eqref{def Jc}$ and Jensen inequality. Now observe that:
\[
\lim_{k_{0}\to+\infty}\frac{u\left(k_{0}\right)}{k_{0}}=\lim_{k_{0}\to+\infty}u'\left(k_{0}\right)=0.
\]
Hence for $k_{0}$ sufficiently large (say $k_{0}>k^{*}$):
\begin{eqnarray*}
\frac{u\left(k_{0}\right)}{k_{0}} & < & \eta\Biggl\{\left(1-e^{-\rho}\right)\left[1+\left(L+\epsilon_{0}\right)\left(e^{\left(L+\epsilon_{0}\right)}+1\right)\right]+\\
 &  & +\left(1+L+\epsilon_{0}\right)\left(1+e^{\left(L+\epsilon_{0}\right)N_{\eta}}\right)\int_{1}^{N_{\eta}}te^{-\rho t}\mbox{d}t+\left(1+L+\epsilon_{0}\right)\int_{N_{\eta}}^{+\infty}te^{-\rho t}\mbox{d}t\Biggl\}^{-1}
\end{eqnarray*}
Observe that this is possible because the expression into the brackets
does not depend on $k_{0}$. In fact, like $N_{\eta},$ it depends
only on $\eta$ and on the problem's data $L$, $\epsilon_{0}$, $\rho$
- and so does $k^{*}$.

By $\eqref{eq: nuova prop V}$, this implies for \emph{every} $c\in\Lambda\left(k_{0}\right)$:
\[
U\left(c;k_{0}\right)\leq2k_{0}\eta
\]
which gives, taking the sup over $\Lambda\left(k_{0}\right)$:
\[
V\left(k_{0}\right)\leq2k_{0}\eta.
\]
Hence the assertion is proven, because the previous inequality holds
for every 
\[
k_{0}>\max\left\{ \frac{1}{L+\epsilon_{0}}\hat{M},\frac{1}{L+\epsilon_{0}},k^{*}\right\} ,
\]
and the last quantity is a threshold depending only on $\eta$ and
on the problem's data.\\

\textbf{iii)} In the first place, we prove that
\[
V\left(0\right)=0.
\]
 Let $c\in\Lambda\left(0\right);$ by definition, $c\geq0$ so that
\[
\forall t\geq0:\dot{k}\left(t;0,c\right)\leq F\left(k\left(t;0,c\right)\right).
\]
Observe that $F$ is precisely the function which defines the dynamics
of $k\left(\cdot;0,0\right)$, hence by $\eqref{eq: comp ODE weak}$:
\[
\forall t\geq0:k\left(t;0,c\right)\leq k\left(t;0,0\right)=0
\]

where the last equality holds by Lemma $\ref{caratt controlli ammiss costanti}$,
i).

Hence $k\left(\cdot;0,c\right)\equiv0$ which together with $F\left(0\right)=0$
implies $c\equiv0$. So $\Lambda\left(0\right)=\left\{ 0\right\} $,
which implies
\[
V\left(0\right)=U\left(0;0\right)=\int_{0}^{+\infty}e^{-\rho t}u\left(0\right)\mbox{d}t=0
\]

Now we show that
\[
\lim_{k\to0}V\left(k\right)=0.
\]
In this case we have to study the behaviour of $V\left(k_{0}\right)$
when $k_{0}\to0$, so we use the sublinearity of $F\left(x\right)$
for $x\to+\infty$ and the concavity of $F$ near $0$.

As a first step, we construct a linear function which is always above
$F$ with these two tools. Indeed we show that there is $m>0$ such
that the function
\[
G\left(x\right):=\begin{cases}
mx & \mbox{ if }x\in\left[0,\bar{k}\right]\\
\left(L+\epsilon_{0}\right)\left(x-\bar{k}\right)+m\bar{k} & \mbox{ if }x\geq\bar{k}
\end{cases}
\]
satisfies
\begin{equation}
\forall x\geq0:F\left(x\right)\leq G\left(x\right).\label{F<G}
\end{equation}
If $F'\left(\bar{k}\right)\leq L+\epsilon_{0}$ then it is enough
to take $m>\max\left\{ F'\left(0\right),F'\left(\bar{k}\right),\frac{F\left(\bar{k}\right)}{\bar{k}}\right\} $.

If $F'\left(\bar{k}\right)>L+\epsilon_{0}$ then observe that for
every $x\geq\bar{k}$:
\[
\frac{F\left(x\right)-F\left(\bar{k}\right)}{x-\bar{k}}\leq F'\left(x\right)\to L,\ \mbox{for }x\to+\infty.
\]
Hence there exists $M\geq\bar{k}$ such that, for every $x\geq M$,
\[
F\left(x\right)\leq F\left(\bar{k}\right)+\left(L+\epsilon_{0}\right)\left(x-\bar{k}\right)
\]
which implies that for every $x\geq\bar{k}$:
\[
F\left(x\right)\leq\left(L+\epsilon_{0}\right)\left(x-\bar{k}\right)+F\left(\bar{k}\right)+\max_{\left[\bar{k},M\right]}F.
\]
Hence we replace the third of the previous conditions on $m$ with
$m\bar{k}>F\left(\bar{k}\right)+\max_{\left[\bar{k},M\right]}F$.
Observe that condition $m>F'\left(\bar{k}\right)$ is still necessary
to ensure that $mx>F\left(x\right)$ for $x\in\left[\underline{k},\bar{k}\right]$
(Lagrange's theorem proves that it is sufficient).

Suppose also, for reasons that will be clear later, that
\begin{equation}
m>1.\label{m>1}
\end{equation}

Now take $k_{0}>0$, $c\in\Lambda\left(k_{0}\right)$ and consider
the function $h:[0,+\infty)\to\mathbb{R}$ which is the unique solution
to the Cauchy's problem
\[
\begin{cases}
h\left(0\right)=k_{0}\\
\dot{h}\left(t\right)=G\left(h\left(t\right)\right) & \ t\geq0
\end{cases}
\]
Hence, by $\eqref{F<G}$ and $\eqref{eq: comp ODE weak}$, $k:=k\left(\cdot;k_{0},c\right)\leq h.$
So, setting 
\begin{align*}
 & \bar{t}:=\frac{1}{m}\log\left(\frac{\bar{k}}{k_{0}}\right)\mbox{ and }\hat{k}:=\bar{k}\left(m-L-\epsilon_{0}\right)
\end{align*}
we get, for every $t\in\left[0,\bar{t}\right]$:
\[
h\left(t\right)=k_{0}e^{mt}
\]
and, for every $t\geq\bar{t}$:
\begin{eqnarray*}
h\left(t\right) & = & e^{\left(L+\epsilon_{0}\right)t}\int_{\bar{t}}^{t}e^{-\left(L+\epsilon_{0}\right)s}\hat{k}\mbox{d}s+\bar{k}e^{-\left(L+\epsilon_{0}\right)\bar{t}}\ =\ \frac{\hat{k}e^{-\left(L+\epsilon_{0}\right)\bar{t}}}{L+\epsilon_{0}}e^{\left(L+\epsilon_{0}\right)t}+\bar{k}e^{-\left(L+\epsilon_{0}\right)\bar{t}}-\frac{\hat{k}}{L+\epsilon_{0}}\\
 & =: & \omega_{0}\left(k_{0}\right)e^{\left(L+\epsilon_{0}\right)t}+\omega_{1}\left(k_{0}\right)-\frac{\hat{k}}{L+\epsilon_{0}}
\end{eqnarray*}
where by definition of $\bar{t}$ the functions $\omega_{i}$ satisfy:
\begin{align*}
 & \omega_{0}\left(k_{0}\right)=\frac{\hat{k}}{L+\epsilon_{0}}e^{-\left(L+\epsilon_{0}\right)\bar{t}}=\frac{\hat{k}}{L+\epsilon_{0}}\left(\frac{k_{0}}{\overline{k}}\right)^{\frac{L+\epsilon_{0}}{m}}\\
 & \omega_{1}\left(k_{0}\right)=\overline{k}e^{-\left(L+\epsilon_{0}\right)\bar{t}}=\overline{k}\left(\frac{k_{0}}{\overline{k}}\right)^{\frac{L+\epsilon_{0}}{m}}.
\end{align*}
Using the state equation, we deduce two estimates for the integrals
of $c$ by the above computations of $h$. 

For every $t\in\left[0,\bar{t}\right]$ (remembering that $h$ is
increasing so that $\forall s\leq t:\, h\left(s\right)\leq\bar{k}$):
\begin{eqnarray}
\int_{0}^{t}c\left(s\right)\mbox{d}s & \leq & k_{0}+\int_{0}^{t}F\left(k\left(s\right)\right)\mbox{d}s\leq k_{0}+\int_{0}^{t}G\left(h\left(s\right)\right)\mbox{d}s\nonumber \\
 & = & k_{0}+\int_{0}^{t}mk_{0}e^{ms}\mbox{d}s=k_{0}e^{mt}.\label{stima 1 per V in 0}
\end{eqnarray}
Instead, for every $t>\bar{t}$:
\begin{eqnarray}
\int_{0}^{t}c\left(s\right)\mbox{d}s & \leq & k_{0}+\int_{0}^{\bar{t}}G\left(h\left(s\right)\right)\mbox{d}s+\int_{\bar{t}}^{t}G\left(h\left(s\right)\right)\mbox{d}s\nonumber \\
 & \leq & k_{0}e^{m\bar{t}}+\int_{\bar{t}}^{t}\left\{ \left(L+\epsilon_{0}\right)h\left(s\right)+\hat{k}\right\} \mbox{d}s\nonumber \\
 & \leq & \bar{k}+\left(t-\bar{t}\right)\hat{k}+\left(L+\epsilon_{0}\right)\int_{\bar{t}}^{t}\left\{ \omega_{0}\left(k_{0}\right)e^{\left(L+\epsilon_{0}\right)s}+\omega_{1}\left(k_{0}\right)-\frac{\hat{k}}{L+\epsilon_{0}}\right\} \mbox{d}s\nonumber \\
 & \leq & \bar{k}+\omega_{0}\left(k_{0}\right)\left[e^{\left(L+\epsilon_{0}\right)t}-e^{\left(L+\epsilon_{0}\right)\bar{t}}\right]+\left(L+\epsilon_{0}\right)\left(t-\bar{t}\right)\omega_{1}\left(k_{0}\right)\nonumber \\
 & \leq & \bar{k}+\omega_{0}\left(k_{0}\right)e^{\left(L+\epsilon_{0}\right)t}-\frac{\hat{k}}{L+\epsilon_{0}}+\left(L+\epsilon_{0}\right)\left(t-\bar{t}\right)\omega_{1}\left(k_{0}\right)\label{stima 2 per V in 0}
\end{eqnarray}
where we have used $h\left(s\right)\geq\bar{k}$ for $s\in\left(\bar{t},t\right)$
and the fact that $k_{0}e^{m\bar{t}}=\bar{k}$.

Now observe that
\begin{align}
 & \lim_{k_{0}\to0}\omega_{0}\left(k_{0}\right)=\lim_{k_{0}\to0}\omega_{1}\left(k_{0}\right)=0\nonumber \\
 & \lim_{k_{0}\to0}\bar{t}=\lim_{k_{0}\to0}\frac{1}{m}\log\left(\frac{\bar{k}}{k_{0}}\right)=+\infty.\label{condizioni V in 0}
\end{align}
Hence if $k_{0}$ is small enough (say $k_{0}<k^{*}$), we may assume
$\bar{t}>1$ and $\omega_{i}\left(k_{0}\right)\leq1$ for $i=0,1$,
so that $\eqref{stima 2 per V in 0}$ implies, for every $t>\bar{t}$:
\begin{eqnarray}
\frac{\int_{0}^{t}c\left(s\right)\mbox{d}s}{t} & \leq & \bar{k}+e^{\left(L+\epsilon_{0}\right)t}+\left(L+\epsilon_{0}\right)\frac{\left(t-\bar{t}\right)}{t}\,\leq\bar{k}+e^{\left(L+\epsilon_{0}\right)t}+\left(L+\epsilon_{0}\right)\label{stima 3 per V in 0}
\end{eqnarray}
Hence, by Lemma $\ref{funzionale finito}$, iii) , by Remark $\ref{remark u concava}$,
and by $\eqref{stima 1 per V in 0}$, $\eqref{stima 3 per V in 0}$,
the following inequality holds for every $k_{0}<k^{*}$ and every
$c\in\Lambda\left(k_{0}\right)$:
\begin{eqnarray*}
0\ \leq\ U\left(c;k_{0}\right) & = & \rho\int_{0}^{+\infty}e^{-\rho t}\int_{0}^{t}u\left(c\left(s\right)\right)\mbox{d}s\mbox{d}t\,\leq\,\rho\int_{0}^{+\infty}te^{-\rho t}u\left(\frac{\int_{0}^{t}c\left(s\right)\mbox{d}s}{t}\right)\mbox{d}t\\
 & \leq & \rho\int_{0}^{1}e^{-\rho t}u\left(\int_{0}^{t}c\left(s\right)\mbox{d}s\right)\mbox{d}t+\rho\int_{1}^{\bar{t}}te^{-\rho t}u\left(\frac{k_{0}e^{mt}}{t}\right)\mbox{d}t+\\
 &  & +\rho\int_{\bar{t}}^{+\infty}te^{-\rho t}u\left(\bar{k}+e^{\left(L+\epsilon_{0}\right)t}+\left(L+\epsilon_{0}\right)\right)\mbox{d}t\\
 & \leq & \rho\int_{0}^{1}e^{-\rho t}u\left(k_{0}e^{mt}\right)\mbox{d}t+\rho u\left(\frac{k_{0}e^{m\bar{t}}}{\bar{t}}\right)\int_{1}^{\bar{t}}te^{-\rho t}\mbox{d}t+\\
 &  & +\rho u\left(\bar{k}+\left(L+\epsilon_{0}\right)\right)\int_{\bar{t}}^{+\infty}te^{-\rho t}\mbox{d}t+\rho\int_{\bar{t}}^{+\infty}te^{-\rho t}u\left(e^{\left(L+\epsilon_{0}\right)t}\right)\mbox{d}t\\
 & \leq & \rho u\left(k_{0}e^{m}\right)\int_{0}^{1}e^{-\rho t}\mbox{d}t+\rho u\left(\frac{\bar{k}}{\bar{t}}\right)\frac{e^{-\rho}\left(1+\rho\right)}{\rho^{2}}+\\
 &  & +\rho u\left(\bar{k}+\left(L+\epsilon_{0}\right)\right)\int_{\bar{t}}^{+\infty}te^{-\rho t}\mbox{d}t+\rho\int_{\bar{t}}^{+\infty}te^{-\rho t}u\left(e^{\left(L+\epsilon_{0}\right)t}\right)\mbox{d}t
\end{eqnarray*}
where we used also the fact that the function $t\to\frac{e^{mt}}{t}$
is increasing for $t>1$, by condition $\eqref{m>1}$. 

It follows from $\eqref{condizioni V in 0}$ and the fact that $\lim_{x\to0}u\left(x\right)=0$,
together with Remark $\ref{remark 1}$, that the above quantity tends
to $0$ as $k_{0}\to0$; moreover, that quantity does not depend on
$c$.

Hence, noticing that $k^{*}$ depends only on the data and $m$, we
see that for any $\epsilon>0$ there exists $\delta\in\left(0,k^{*}\right]$
such that for every $k_{0}\in\left(0,\delta\right)$ and for \emph{every}
$c\in\Lambda\left(k_{0}\right)$: 
\[
U\left(c;k_{0}\right)\leq\epsilon,
\]
which implies, taking the sup over $\Lambda\left(k_{0}\right)$, that
$V\left(k_{0}\right)\leq\epsilon$ - and the assertion follows.
\end{proof}

\section{Existence of the optimal control}

In this section we deal with a fundamental topic of any optimization
problem: the existence of an optimal control. For any fixed $k_{0}\geq0$,
we look for a control $c^{*}\in\Lambda\left(k_{0}\right)$ satisfying
\[
U\left(c^{*};k_{0}\right)=\sup_{c\in\Lambda\left(k_{0}\right)}U\left(c;k_{0}\right)=V\left(k_{0}\right).
\]
We preliminary observe that the peculiar features of our problem,
particularly the absence of any boundedness conditions on the admissible
controls, force us to make use of this result in proving certain properties
of the value function which usually do not require such a settlement
- and which we postpone for this reason.

First observe that by Theorem $\ref{Basic Prop V}$, iii) if we set
$c_{0}:\equiv0$, then $U\left(c_{0},0\right)=0=V\left(0\right)$;
hence $c_{0}$ is optimal at $0$.

Let $k_{0}>0$; this will be the initial state which we will refer
to during the whole section - hence the meaning of this symbol will
not change in this context.

We split the construction in various steps; first we make a simple
but important
\begin{rem}
\label{remark weak ovunque}Suppose that $\left(f_{n}\right)_{n\in\mathbb{N}},f$
are functions in $\mathcal{L}_{loc}^{1}\left(\left[0,+\infty\right),\mathbb{R}\right)$
such that for \emph{every} $N\in\mathbb{N}$, $f_{n}\rightharpoonup f$
in $L^{1}\left(\left[0,N\right],\mathbb{R}\right)$. If $T>0$, $T\in\mathbb{R}$,
then it follows from the definition of weak convergence that, for
$g\in L^{\infty}\left(\left[0,T\right],\mathbb{R}\right)$: 
\begin{eqnarray*}
\int_{0}^{T}g\left(s\right)f_{n}\left(s\right)\mbox{d}s\hspace{-2mm} & =\hspace{-2mm} & \int_{0}^{\left[T\right]+1}\chi_{\left[0,T\right]}g\left(s\right)f_{n}\left(s\right)\mbox{d}s\to\int_{0}^{\left[T\right]+1}\chi_{\left[0,T\right]}g\left(s\right)f\left(s\right)\mbox{d}s=\int_{0}^{T}g\left(s\right)f\left(s\right)\mbox{d}s.
\end{eqnarray*}
Hence $f_{n}\rightharpoonup f$ in $L^{1}\left(\left[0,T\right],\mathbb{R}\right)$,
for every $T>0,T\in\mathbb{R}$.
\end{rem}
\textbf{\emph{$\,$}}\\
\textbf{\emph{Step 1.}} The first step is to find a maximizing sequence
of controls which are admissible at $k_{0}$ and a function $\gamma\in\mathcal{L}_{loc}^{1}\left(\left[0,+\infty\right),\mathbb{R}\right)$,
such that the sequence weakly converges to $\gamma$ in $L^{1}\left(\left[0,T\right],\mathbb{R}\right)$,
for every $T>0$.

By definition of supremum, we can find a maximizing sequence; that
is to say, there exist a sequence $\left(c_{n}\right)_{n\in\mathbb{N}}\subseteq\Lambda\left(k_{0}\right)$
of admissible controls satisfying:
\[
\lim_{n\to+\infty}U\left(c_{n};k_{0}\right)=V\left(k_{0}\right).
\]
In order to apply the tools we set up at the beginning of the chapter,
we need the following result.
\begin{lem}
\label{Dunf-Pett}Let $T\in\mathbb{N}$ and $\left(f_{n}\right)_{n\in\mathbb{N}}\subseteq\mathcal{L}_{loc}^{1}\left(\left[0,+\infty\right),\mathbb{R}\right)$,
$M\left(T\right)>0$ such that
\[
\forall n\in\mathbb{N}:\left\Vert f_{n}\right\Vert _{\infty,\left[0,T\right]}\leq M\left(T\right).
\]
Then there exist a subsequence $\left(\overline{f}_{n}\right)_{n\in\mathbb{N}}$
of$\,$ $\left(f_{n}\right)_{n\in\mathbb{N}}$ and a function $f\in L^{1}\left(\left[0,T\right],\mathbb{R}\right)$
such that\textup{
\[
\overline{f}_{n}\rightharpoonup f\mbox{ in }L^{1}\left(\left[0,T\right],\mathbb{R}\right).
\]
}\end{lem}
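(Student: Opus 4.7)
The natural strategy is to invoke the Dunford–Pettis theorem, which characterises relatively weakly compact subsets of $L^{1}(E)$ (for a finite measure space $E$) as the uniformly integrable ones. Since $[0,T]$ has finite Lebesgue measure, it suffices to verify uniform integrability of $(f_n)_{n\in\mathbb{N}}$, viewed as a sequence in $L^{1}([0,T],\mathbb{R})$.

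First I would observe that the hypothesis $\|f_n\|_{\infty,[0,T]}\le M(T)$ gives an $L^{1}$ bound:
\[
\|f_n\|_{1,[0,T]}=\int_{0}^{T}|f_n(s)|\mbox{d}s\le T\cdot M(T),
\]
so the sequence is bounded in $L^{1}([0,T],\mathbb{R})$. Next, for any measurable $E\subseteq[0,T]$,
\[
\int_{E}|f_n(s)|\mbox{d}s\le M(T)\cdot|E|,
\]
so given $\epsilon>0$ the choice $\delta:=\epsilon/M(T)$ ensures $\int_{E}|f_n|<\epsilon$ whenever $|E|<\delta$, uniformly in $n$. This is precisely the uniform absolute continuity condition; combined with the $L^1$-bound above it yields uniform integrability of $(f_n)_{n\in\mathbb{N}}$.

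By the Dunford–Pettis theorem the set $\{f_n:n\in\mathbb{N}\}$ is relatively weakly compact in $L^{1}([0,T],\mathbb{R})$. Since this space is separable (so that bounded subsets of its dual are metrisable in the weak$^{*}$ topology, hence one can use the Eberlein–Šmulian theorem to pass from relative weak compactness to sequential relative weak compactness) I can extract a subsequence $(\overline{f}_n)_{n\in\mathbb{N}}$ and a limit $f\in L^{1}([0,T],\mathbb{R})$ with $\overline{f}_n\rightharpoonup f$, which is the desired conclusion.

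No single step here is a real obstacle: the only point requiring care is to remember that Dunford–Pettis delivers weak relative compactness in the topological sense, so the passage to a convergent subsequence relies on the Eberlein–Šmulian theorem, which is legitimate because $L^1([0,T],\mathbb R)$ is separable. Everything else is a direct consequence of the uniform $L^{\infty}$ bound and the finiteness of $|[0,T]|$.
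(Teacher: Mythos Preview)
Your proposal is correct and follows essentially the same route as the paper: use the uniform $L^\infty$ bound on $[0,T]$ to verify uniform integrability (equivalently, absolute equicontinuity of the associated measures) and then invoke the Dunford--Pettis criterion. Your version is in fact slightly tidier, since you bound $\int_E|f_n|$ directly for an arbitrary measurable set and you make explicit the Eberlein--\v{S}mulian step from relative weak compactness to sequential weak compactness, which the paper leaves implicit.
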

\begin{proof}
For every $0\leq t_{0}<t_{1}\leq T$:
\[
\int_{t_{0}}^{t_{1}}\left|f_{n}\left(s\right)\right|\mbox{d}s\leq\left\Vert f_{n}\right\Vert _{\infty,\left[0,T\right]}\cdot\left(t_{1}-t_{0}\right)\leq M\left(T\right)\cdot\left(t_{1}-t_{0}\right).
\]
Hence, by the fact that the family $\left\{ \left(t_{0},t_{1}\right)\in\mathcal{P}\left(\left[0,T\right]\right)/t_{0},t_{1}\in\left[0,T\right]\right\} $
generates the Borel $\sigma$- algebra in $\left[0,T\right]$, we
deduce that the latter relation holds for every measurable set $E\subseteq\left[0,T\right]$;
that is to say
\[
\int_{E}\left|f_{n}\left(s\right)\right|\mbox{d}s\leq M\left(T\right)\cdot\mu\left(E\right).
\]
This implies easily that the densities $\left\{ d_{n}/n\in\mathbb{N}\right\} $
given by $d_{n}\left(E\right):=\int_{E}f_{n}\left(s\right)\mbox{d}s$
are absolutely equicontinuous. So the thesis follows from the Dunford-Pettis
criterion (see \cite{Edwards}). Observe that the third condition
required by such theorem, that is to say, for any $\epsilon>0$ there
exists a compact set $K_{\epsilon}\subseteq\left[0,T\right]$ such
that
\[
\forall n\in\mathbb{N}:\int_{\left[0,T\right]\setminus K_{\epsilon}}f_{n}\left(s\right)\mbox{d}s\leq\epsilon
\]
is obviously satisfied.
\end{proof}
Now we apply Lemma $\ref{controlli limitati e meglio}$ to $\left(c_{n}\right)_{n\in\mathbb{N}}$
in order to find a new sequence $\left(c_{n}^{1}\right)_{n\in\mathbb{N}}\subseteq\Lambda\left(k_{0}\right)$
such that, for every $n\in\mathbb{N}$:
\begin{align*}
 & U\left(c_{n}^{1};k_{0}\right)\geq U\left(c_{n};k_{0}\right)\\
 & c_{n}^{1}=c_{n}\wedge N\left(k_{0},1\right)\mbox{ a.e. in }\left[0,1\right].
\end{align*}
In particular $\left(c_{n}^{1}\right)_{n\in\mathbb{N}}\subseteq\mathcal{L}_{loc}^{1}\left(\left[0,+\infty\right),\mathbb{R}\right)$
and $\left\Vert c_{n}^{1}\right\Vert _{\infty,\left[0,1\right]}\leq N\left(k_{0},1\right)$
for every $n\in\mathbb{N}$. Hence by Lemma $\ref{Dunf-Pett}$, there
exists a sequence $\left(\overline{c}_{n}^{1}\right)_{n\in\mathbb{N}}$
extracted from $\left(c_{n}^{1}\right)_{n\in\mathbb{N}}$ and a function
$c^{1}\in L^{1}\left(\left[0,1\right],\mathbb{R}\right)$ such that
\[
\overline{c}_{n}^{1}\rightharpoonup c^{1}\mbox{ in }L^{1}\left(\left[0,1\right],\mathbb{R}\right).
\]

Now define, for every $n\in\mathbb{N}$:
\[
c_{n}^{2}:=\left(\overline{c}_{n}^{1}\right)^{2}
\]
where $\left(\overline{c}_{n}^{1}\right)^{2}$ is understood with
the notation of Lemma $\ref{controlli limitati e meglio}$.

Hence for every $n\in\mathbb{N}$:
\begin{align*}
 & U\left(c_{n}^{2};k_{0}\right)\geq U\left(\overline{c}_{n}^{1};k_{0}\right)\\
 & c_{n}^{2}=\overline{c}_{n}^{1}\wedge N\left(k_{0},2\right)\mbox{ a.e. in }\left[0,2\right].
\end{align*}
Again by Lemma $\ref{Dunf-Pett}$, we can exhibit a subsequence $\left(\overline{c}_{n}^{2}\right)_{n\in\mathbb{N}}$
of $\left(c_{n}^{2}\right)_{n\in\mathbb{N}}$ and a function $c^{2}\in L^{1}\left(\left[0,2\right],\mathbb{R}\right)$
such that
\[
\overline{c}_{n}^{2}\rightharpoonup c^{2}\mbox{ in }L^{1}\left(\left[0,2\right],\mathbb{R}\right).
\]
Following this pattern we are able to give a recursive definition
of a family

$\left\{ \left(\left(c_{n}^{T}\right)_{n\in\mathbb{N}},\left(\overline{c}_{n}^{T}\right)_{n\in\mathbb{N}},c^{T}\right)/T\in\mathbb{N}\right\} $
and $\left\{ i\left(T,n\right)\in\left[0,+\infty\right)/T,n\in\mathbb{N}\right\} $
satisfying, for every

$T,n\in\mathbb{N}$:
\begin{align}
 & c_{n}^{T}\in\Lambda\left(k_{0}\right),\,\overline{c}_{n}^{T}=c_{n+i\left(T,n\right)}^{T}\nonumber \\
 & U\left(c_{n}^{T+1};k_{0}\right)\geq U\left(\overline{c}_{n}^{T};k_{0}\right)\nonumber \\
 & c_{n}^{T+1}=\overline{c}_{n}^{T}\wedge N\left(k_{0},T+1\right)\mbox{ a.e. in }\left[0,T+1\right]\nonumber \\
 & \overline{c}_{n}^{T}\rightharpoonup c^{T}\mbox{ in }L^{1}\left(\left[0,T\right],\mathbb{R}\right)\label{conv debole barra}
\end{align}
Now we show that, for every $T\in\mathbb{N}$, 
\begin{equation}
c^{T+1}=c^{T}\mbox{ almost everywhere in }\left[0,T\right].\label{gamma buona def}
\end{equation}
Assume the notation ``$\tilde{\forall}s\in A:P\left(s\right)$''
with the meaning `` for almost every $s\in A$, $P\left(s\right)$
holds'' . Hence:
\begin{eqnarray*}
\tilde{\forall}s\in\left[0,T\right]:\overline{c}_{n}^{T+1}\left(s\right) & = & c_{n+i\left(T,n\right)}^{T+1}\left(s\right)\ =\ \overline{c}_{n+i\left(T,n\right)}^{T}\left(s\right)\wedge N\left(k_{0},T+1\right)\ =\ \overline{c}_{n+i\left(T,n\right)}^{T}\left(s\right)
\end{eqnarray*}
where the last equality holds because, by the penultimate condition
in $\eqref{conv debole barra}$ and by the monotonicity of the function
$N\left(k_{0},\cdot\right)$, for any $p\in\mathbb{N}$: 
\[
\left\Vert \overline{c}_{p}^{T}\right\Vert _{\infty,\left[0,T\right]}=\left\Vert c_{p+i\left(T,p\right)}^{T}\right\Vert _{\infty,\left[0,T\right]}\leq N\left(k_{0},T\right)\leq N\left(k_{0},T+1\right).
\]
Hence the assertion in $\eqref{gamma buona def}$ follows from the
essential uniqueness of the weak limit in $L^{1}\left(\left[0,T\right],\mathbb{R}\right)$.

Now we want to set up a diagonal procedure in order to exhibit a sequence
$\left(\gamma_{n}\right)_{n\in\mathbb{N}}\subseteq\Lambda\left(k_{0}\right)$
and a function $\gamma\in\mathcal{L}_{loc}^{1}\left(\left[0,+\infty\right),\mathbb{R}\right)$
such that
\[
\gamma_{n}\rightharpoonup\gamma\mbox{ in }L^{1}\left(\left[0,T\right],\mathbb{R}\right)\quad\forall T>0.
\]

\begin{defn}
\label{definizione quasi ottimale}i) $\gamma:\left[0,+\infty\right)\to\mathbb{R}$
is the function
\[
\gamma\left(t\right):=c^{\left[t\right]+1}\left(t\right)\quad\forall t\geq0
\]
 ii) The sequence $\left(\gamma_{n}\right)_{n\in\mathbb{N}}$ is defined
as follows:
\[
\begin{cases}
\gamma_{1}:=\overline{c}_{1}^{1}\\
\forall n\geq2:\mbox{ if }\gamma_{n}=c_{j\left(n\right)}^{n}\mbox{ then }\gamma_{n+1}=\overline{c}_{m}^{n+1},\\
\mbox{where }m:=\min\left\{ k\in\mathbb{N}/k>j\left(n\right)\mbox{ and }c_{k}^{n+1}\in\left(\overline{c}_{p}^{n+1}\right)_{p\in\mathbb{N}}\right\} 
\end{cases}
\]

\end{defn}
This diagonal procedure is resumed by the following scheme, in which
the elements of the (weakly) convergent subsequences $\left(\overline{c}_{n}^{m}\right)_{n\in\mathbb{N}}$,
$m\geq1$ are emphasized by the square brackets.
\[
\begin{array}{cccccccccccccc}
c_{1}^{1} & c_{2}^{1} & \dots & c_{h}^{1} & \dots & \left[\mathbf{c_{j\left(1\right)}^{1}}\right] & \dots & c_{i}^{1} & \dots & c_{k}^{2} & \dots & c_{j\left(2\right)}^{1} & \dots & c_{j\left(3\right)}^{1}\\
c_{1}^{2} & c_{2}^{2} & \dots & \left[c_{h}^{2}\right] & \dots & c_{j\left(1\right)}^{2} & \dots & c_{i}^{2} & \dots & c_{k}^{2} & \dots & \left[\mathbf{c_{j\left(2\right)}^{2}}\right] & \dots & c_{j\left(3\right)}^{2}\\
c_{1}^{3} & c_{2}^{3} & \dots & c_{h}^{3} & \dots & c_{j\left(1\right)}^{3} & \dots & \left[c_{i}^{3}\right] & \dots & \left[c_{k}^{3}\right] & \dots & c_{j\left(2\right)}^{3} & \dots & \left[\mathbf{c_{j\left(3\right)}^{3}}\right]\\
c_{1}^{4} & \dots & \dots & \dots & \dots & \dots & \dots & \dots & \dots & \dots & \dots & \dots & \dots & \dots
\end{array}
\]

\begin{rem}
\label{conv debole}Let $T\in\mathbb{N}$. Condition $\eqref{gamma buona def}$
implies that $\gamma=c^{T}$ almost everywhere in $\left[0,T\right]$.
Hence it follows from $\eqref{conv debole barra}$ that:
\[
\overline{c}_{n}^{T}\rightharpoonup\gamma\mbox{ in }L^{1}\left(\left[0,T\right],\mathbb{R}\right).
\]

\end{rem}
We have shown that $\left(\overline{c}_{n}^{T+1}\right)_{n\in\mathbb{N}}$,
restricted to $\left[0,T\right]$, almost coincides with a subsequence
of $\left(\overline{c}_{n}^{T}\right)_{n\in\mathbb{N}}$ ; we want
to prove an analogous result in relation to $\left(\gamma_{n}\right)_{n\in\mathbb{N}}$.

We have $\gamma_{1}:=\overline{c}_{1}^{1}=c_{j\left(1\right)}^{1}$
(with $j\left(1\right)=1+i\left(1,1\right))$ , so by Definition $\ref{definizione quasi ottimale}$,
there exists $m_{2}>j\left(1\right)$ such that $\gamma_{2}=\overline{c}_{m_{2}}^{2}=c_{j\left(m_{2}\right)}^{2}$,
where $j\left(m_{2}\right):=m_{2}+i\left(2,m_{2}\right)$. Hence
\begin{eqnarray*}
\tilde{\forall}s\in\left[0,1\right]:\gamma_{2}\left(s\right) & = & \overline{c}_{m_{2}}^{2}\left(s\right)=c_{j\left(m_{2}\right)}^{2}\left(s\right)\ =\ \overline{c}_{j\left(m_{2}\right)}^{1}\left(s\right)\wedge N\left(k_{0},2\right)=\overline{c}_{j\left(m_{2}\right)}^{1}\left(s\right)
\end{eqnarray*}
where the last equality again holds because by construction $\left\Vert \overline{c}_{p}^{1}\right\Vert _{\infty.\left[0,1\right]}\leq N\left(k_{0},1\right)\leq N\left(k_{0},2\right)$
for any $p\in\mathbb{N}$.

Moreover for some $m_{3}>j\left(m_{2}\right)$, $\gamma_{3}=\overline{c}_{m_{3}}^{3}$;
setting $j\left(m_{3}\right):=m_{3}+i\left(3,m_{3}\right)$ and $j\left(j\left(m_{3}\right)\right):=j\left(m_{3}\right)+i\left(2,j\left(m_{3}\right)\right)$,
we have:
\begin{eqnarray*}
\tilde{\forall}s\in\left[0,1\right]:\gamma_{3}\left(s\right) & = & \overline{c}_{m_{3}}^{3}\left(s\right)\ =\ c_{j\left(m_{3}\right)}^{3}\left(s\right)\ =\ \overline{c}_{j\left(m_{3}\right)}^{2}\left(s\right)\wedge N\left(k_{0},3\right)\\
 & = & c_{j\left(j\left(m_{3}\right)\right)}^{2}\left(s\right)\wedge N\left(k_{0},3\right)\ =\ \overline{c}_{j\left(j\left(m_{3}\right)\right)}^{1}\left(s\right)\wedge N\left(k_{0},2\right)\wedge N\left(k_{0},3\right)\\
 & = & \overline{c}_{j\left(j\left(m_{3}\right)\right)}^{1}\left(s\right)
\end{eqnarray*}
as $N\left(k_{0},1\right)\leq N\left(k_{0},2\right)\leq N\left(k_{0},3\right)$,
and
\begin{eqnarray*}
\tilde{\forall}s\in\left[0,2\right]:\gamma_{3}\left(s\right) & = & c_{j\left(m_{3}\right)}^{3}\left(s\right)=\overline{c}_{j\left(m_{3}\right)}^{2}\left(s\right)\wedge N\left(k_{0},3\right)\\
 & = & \overline{c}_{j\left(m_{3}\right)}^{2}
\end{eqnarray*}
 Hence, by the fact that $1<j\left(m_{2}\right)<j\left(j\left(m_{3}\right)\right)$,
we see that $\left(\gamma_{1},\gamma_{2},\gamma_{3}\right)$ coincides
with a subsequence of $\left(\overline{c}_{n}^{1}\right)_{n\in\mathbb{N}}$almost
everywhere in $\left[0,1\right]$; it follws from $j\left(m_{3}\right)>m_{2}$
that $\left(\gamma_{2},\gamma_{3}\right)$ coincides with a subsequence
of $\left(\overline{c}_{n}^{2}\right)_{n\in\mathbb{N}}$ almost everywhere
in $\left[0,2\right]$. Obviously this reasoning can be repeated to
prove by induction the following
\begin{prop}
\label{Step1}Let $\left(\gamma_{n}\right)_{n\in\mathbb{N}}$, $\gamma$
as in Definition $\ref{definizione quasi ottimale}$. Then $\left(\gamma_{n}\right)_{n\in\mathbb{N}}\subseteq\Lambda\left(k_{0}\right)$,$\gamma\in\mathcal{L}_{loc}^{1}\left(\left[0,+\infty\right),\mathbb{R}\right)$
and
\[
\lim_{n\to+\infty}U\left(\gamma_{n};k_{0}\right)=V\left(k_{0}\right).
\]
Moreover, for every $T\in\mathbb{N}$, $\left(\gamma_{n}\right)_{n\geq T}$
coincides almost everywhere in $\left[0,T\right]$ with a subsequence
of $\left(\overline{c}_{n}^{T}\right)_{n\in\mathbb{N}}$. Consequently
\begin{align*}
 & \gamma_{n}\rightharpoonup\gamma\mbox{ in }L^{1}\left(\left[0,T\right],\mathbb{R}\right)\quad\forall T>0,T\in\mathbb{R},\\
 & \left\Vert \gamma_{n}\right\Vert _{\infty,\left[0,T\right]}\leq N\left(k_{0},T\right)\quad\forall T,n\in\mathbb{N}.
\end{align*}
\end{prop}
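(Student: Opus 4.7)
The plan is to verify the claims in the order in which they become available from Definition \ref{definizione quasi ottimale}. Admissibility $\gamma_n \in \Lambda(k_0)$ is immediate: each $c_n^T$ is produced by Lemma \ref{controlli limitati e meglio} applied to a control already in $\Lambda(k_0)$, so $c_n^T \in \Lambda(k_0)$, and by construction every $\gamma_n$ equals some $\overline{c}_{m_n}^n = c_{m_n + i(n, m_n)}^n$. The structural claim about subsequences is the backbone of everything else: I would prove by induction on $T \in \mathbb{N}$ that $(\gamma_n)_{n \geq T}$ coincides almost everywhere on $[0, T]$ with a subsequence of $(\overline{c}_n^T)_{n \in \mathbb{N}}$. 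The text preceding the statement works out $T = 1, 2, 3$ in detail; the induction step rests on the fact that the relation $c_\ell^{T+1} = \overline{c}_\ell^T \wedge N(k_0, T+1)$ becomes an equality a.e. on $[0, T]$, since $\|\overline{c}_p^T\|_{\infty, [0, T]} \leq N(k_0, T) \leq N(k_0, T+1)$ by the monotonicity of $N$ in its second variable.

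Given the structural claim, the two regularity statements follow quickly. The uniform bound $\|\gamma_n\|_{\infty, [0, T]} \leq N(k_0, T)$ (for $n \geq T$) is a direct translation of the a.e. coincidence with some $\overline{c}_m^T$, combined with the truncation property of $c_m^T$ inherited from Lemma \ref{controlli limitati e meglio}. Local integrability $\gamma \in \mathcal{L}^1_{loc}([0, +\infty), \mathbb{R})$ is read off from $\gamma(t) \equiv c^{[t]+1}(t)$, using $c^T \in L^1([0, T])$ together with \eqref{gamma buona def}. For the weak convergence, Remark \ref{conv debole} gives $\gamma = c^T$ a.e. on $[0, T]$; since $\overline{c}_n^T \rightharpoonup c^T$ in $L^1([0, T], \mathbb{R})$ and $(\gamma_n)_{n \geq T}$ coincides a.e. with a subsequence of $(\overline{c}_n^T)_n$, the subsequence inherits the same weak limit, giving $\gamma_n \rightharpoonup \gamma$ in $L^1([0, T], \mathbb{R})$ for integer $T$ and, via Remark \ref{remark weak ovunque}, for all real $T > 0$.

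The main obstacle is the maximizing property $U(\gamma_n; k_0) \to V(k_0)$, since $\gamma_n$ is not itself a term of the original maximizing sequence $(c_m)_m$ but is assembled through nested extractions and truncations. The plan is to telescope the inequalities $U(c_\ell^{T+1}; k_0) \geq U(\overline{c}_\ell^T; k_0) = U(c_{\ell + i(T, \ell)}^T; k_0)$ from level $T = n-1$ down to $T = 0$: writing $\gamma_n = c_{j_n^{(n)}}^n$, one produces a non-decreasing chain of indices $j_n^{(n)} \leq j_n^{(n-1)} \leq \cdots \leq j_n^{(0)}$ with
\begin{equation*}
U(\gamma_n; k_0) \geq U(c_{j_n^{(n-1)}}^{n-1}; k_0) \geq \cdots \geq U(c_{j_n^{(1)}}^1; k_0) \geq U(c_{j_n^{(0)}}; k_0).
\end{equation*}
The rule $m > j(n)$ in Definition \ref{definizione quasi ottimale} forces the base indices $m_n$ to be strictly increasing in $n$, so $m_n \geq n$ and thus $j_n^{(n)} \geq m_n \to +\infty$, whence $j_n^{(0)} \to +\infty$ as well. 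Since $U(c_m; k_0) \to V(k_0)$ by the choice of the maximizing sequence, this yields $\liminf_n U(\gamma_n; k_0) \geq V(k_0)$, and the reverse inequality is automatic from $\gamma_n \in \Lambda(k_0)$ and the definition of $V$.
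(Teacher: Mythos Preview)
Your proposal is correct and follows essentially the same route as the paper: admissibility from the construction, the inductive subsequence identification on $[0,T]$ via the truncation relation $c_\ell^{T+1}=\overline{c}_\ell^{T}\wedge N(k_0,T+1)$ together with the monotonicity of $N(k_0,\cdot)$, weak convergence from Remark~\ref{conv debole} and Remark~\ref{remark weak ovunque}, and the telescoping of $U(c_\ell^{T+1};k_0)\geq U(\overline{c}_\ell^{T};k_0)$ down to the original maximizing sequence to get $\lim_n U(\gamma_n;k_0)=V(k_0)$. Your parenthetical restriction ``(for $n\geq T$)'' on the $L^\infty$ bound is in fact the accurate formulation and is all that is needed downstream.
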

\begin{proof}
By Remark $\ref{conv debole}$, for every $T\in\mathbb{N}$, $\gamma=c^{T}\mbox{ almost everywhere in }\left[0,T\right]$;
hence $\gamma\in L^{1}\left(\left[0,T\right],\mathbb{R}\right)$,
which implies $\gamma\in\mathcal{L}_{loc}^{1}\left(\left[0,+\infty\right),\mathbb{R}\right)$
because $T$ is generic.

By Definition $\ref{definizione quasi ottimale}$, $\gamma_{1}=c_{j\left(1\right)}^{1}$
for some $j\left(1\right)\geq1$; hence by induction we have, for
every $n\in\mathbb{N}$, $\gamma_{n}=\overline{c}_{j\left(n\right)}^{n}$
for some $j\left(n\right)\geq n$; in particular, by the first condition
in $\eqref{conv debole barra}$, $\gamma_{n}\in\Lambda\left(k_{0}\right)$.
With $n\to j\left(n\right)$ defined this way, set $p\left(n\right):=j\left(n\right)+i\left(n,j\left(n\right)\right)$;
so remembering the other conditions in $\eqref{conv debole barra}$:
\begin{eqnarray*}
\left|U\left(\gamma_{n};k_{0}\right)-V\left(k_{0}\right)\right| & = & V\left(k_{0}\right)-U\left(\gamma_{n};k_{0}\right)=V\left(k_{0}\right)-U\left(\overline{c}_{j\left(n\right)}^{n};k_{0}\right)\\
 & = & V\left(k_{0}\right)-U\left(c_{p\left(n\right)}^{n};k_{0}\right)\leq V\left(k_{0}\right)-U\left(\overline{c}_{p\left(n\right)}^{n-1};k_{0}\right)\\
 & = & V\left(k_{0}\right)-U\left(c_{p\left(n\right)+i\left(n-1,p\left(n\right)\right)}^{n-1};k_{0}\right)\\
 & \leq & \ldots\quad\leq V\left(k_{0}\right)-U\left(c_{q\left(n\right)}^{1};k_{0}\right)\\
 & \leq & V\left(k_{0}\right)-U\left(c_{q\left(n\right)};k_{0}\right)=\left|U\left(c_{q\left(n\right)};k_{0}\right)-V\left(k_{0}\right)\right|,
\end{eqnarray*}
for some $q\left(n\right)\geq p\left(n\right)\geq n$. Hence the first
assertion follows from the fact that $\lim_{k\to+\infty}U\left(c_{k};k_{0}\right)$$=V\left(k_{0}\right)$.

Now fix $T\in\mathbb{N}$. The argument developed after Remark $\ref{conv debole}$
inductively shows that there exists a sequence of natural numbers
$n\to k_{T}\left(n\right)$ such that
\[
\forall n\geq T:\tilde{\forall}s\in\left[0,T\right]:\gamma_{n}\left(s\right)=\overline{c}_{n+k_{T}\left(n\right)}^{T}\left(s\right).
\]
This implies by Remark $\ref{conv debole}$ that $\gamma_{n}\rightharpoonup\gamma\mbox{ in }L^{1}\left(\left[0,T\right],\mathbb{R}\right)$. 

As this holds for every $T\in\mathbb{N}$, it is a consequence of
Remark $\ref{remark weak ovunque}$ that it must hold for every real
number $T>0$. The last condition obviously holds by construction
and by $\eqref{conv debole barra}$.
\end{proof}
The first step is then accomplished. 

$\,$\\
\textbf{\emph{Step 2.}} The next step is to show that $\gamma$ is
admissible at $k_{0}$. For this purpose, it is enough to prove the
following
\begin{prop}
Let $T>0$. Hence $\gamma\geq0$ almost everywhere in $\left[0,T\right]$,
and, for every $t\in\left[0,T\right]$, $k\left(t;k_{0},\gamma\right)\geq0$.\end{prop}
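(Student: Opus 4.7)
The plan is to exploit two facts from Proposition \ref{Step1}: the weak convergence $\gamma_{n}\rightharpoonup\gamma$ in $L^{1}([0,T],\mathbb{R})$ and the uniform bound $\|\gamma_{n}\|_{\infty,[0,T]}\leq N(k_{0},T)$. Weak convergence alone handles the positivity of $\gamma$, while the $L^{\infty}$-bound is what allows a strong passage to the limit for the orbits.

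For the first claim, I would test the weak convergence against characteristic functions. For an arbitrary measurable $E\subseteq[0,T]$ we have $\chi_{E}\in L^{\infty}([0,T],\mathbb{R})$, so
\[
\int_{E}\gamma(s)\,\mathrm{d}s=\lim_{n\to+\infty}\int_{E}\gamma_{n}(s)\,\mathrm{d}s\geq 0,
\]
because each $\gamma_{n}\in\Lambda(k_{0})$ is non-negative a.e. Since $E$ is arbitrary, $\gamma\geq 0$ a.e.\ in $[0,T]$.

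For the second claim, set $k_{n}:=k(\cdot;k_{0},\gamma_{n})$ and $\tilde{k}:=k(\cdot;k_{0},\gamma)$. By Remark \ref{prime stime Ic}, $0\leq k_{n}(t)\leq k_{0}e^{\overline{M}T}$ on $[0,T]$, and from the state equation together with Remark \ref{F lip vero con esistenza} and the uniform bound on $\gamma_{n}$, $|\dot{k}_{n}(t)|\leq\overline{M}k_{0}e^{\overline{M}T}+N(k_{0},T)$ on $[0,T]$. Hence $(k_{n})$ is uniformly bounded and uniformly Lipschitz on $[0,T]$, so by Ascoli--Arzelà a subsequence (which I still denote by $(k_{n})$) converges uniformly to some continuous $k^{*}:[0,T]\to[0,+\infty)$ — note the limit is non-negative because each $k_{n}$ is.

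It remains to pass to the limit in the integral form
\[
k_{n}(t)=k_{0}+\int_{0}^{t}F(k_{n}(s))\,\mathrm{d}s-\int_{0}^{t}\gamma_{n}(s)\,\mathrm{d}s.
\]
The first integral converges to $\int_{0}^{t}F(k^{*}(s))\,\mathrm{d}s$ by uniform convergence and continuity of $F$; the second converges to $\int_{0}^{t}\gamma(s)\,\mathrm{d}s$ by weak convergence tested against $\chi_{[0,t]}\in L^{\infty}([0,T],\mathbb{R})$. Thus $k^{*}$ solves the same Cauchy problem as $\tilde{k}$, and uniqueness (Remark \ref{F lip vero con esistenza}) gives $k^{*}=\tilde{k}$ on $[0,T]$. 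A standard subsequence-of-subsequence argument then shows the full sequence $k_{n}$ converges uniformly to $\tilde{k}$, so $\tilde{k}(t)\geq 0$ on $[0,T]$, completing the proof.

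The delicate point is the nonlinear term $\int_{0}^{t}F(k_{n}(s))\,\mathrm{d}s$: weak convergence of $\gamma_{n}$ alone would not be enough to pass to the limit here. This is exactly where the uniform $L^{\infty}$-localization supplied by Lemma \ref{controlli limitati e meglio} (through the bound $N(k_{0},T)$) becomes indispensable, because it converts weak compactness of the controls into strong (uniform) compactness of the trajectories.
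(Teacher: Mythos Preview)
Your proof is correct, but the route for the second claim differs from the paper's. The paper does not invoke Ascoli--Arzel\`a or the uniform $L^{\infty}$ bound on the $\gamma_{n}$. Instead it subtracts the two state equations, uses the Lipschitz bound $F(\kappa_{n})-F(\kappa)\leq\overline{M}(\kappa_{n}-\kappa)$ to obtain the differential inequality
\[
\frac{\mathrm{d}}{\mathrm{d}t}\!\left[(\kappa_{n}(t)-\kappa(t))e^{-\overline{M}t}\right]\leq e^{-\overline{M}t}\bigl(\gamma(t)-\gamma_{n}(t)\bigr),
\]
integrates, and tests the weak convergence against the bounded function $s\mapsto\chi_{[0,t]}(s)e^{\overline{M}(t-s)}$ to conclude $\limsup_{n}\kappa_{n}(t)\leq\kappa(t)$. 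This is a Gronwall-type estimate that needs \emph{only} the weak convergence of $\gamma_{n}$, not the pointwise bound $N(k_{0},T)$. So your closing remark that the $L^{\infty}$ localization is ``indispensable'' here is not accurate: it is indispensable for your compactness argument, but the paper's linearization trick bypasses it entirely. On the other hand, your approach yields more --- full uniform convergence $k_{n}\to\tilde{k}$ on $[0,T]$ rather than a one-sided pointwise bound --- at the cost of using the extra hypothesis. For the first claim, your argument via $\int_{E}\gamma\geq 0$ for all measurable $E$ is the standard weak-closedness of the non-negative cone and is cleaner than the pointwise $\liminf$ inequality the paper quotes.
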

\begin{proof}
It is well known that the weak convergence of $\left(\gamma_{n}\right)_{n\in\mathbb{N}}$
to $\gamma$ in $L^{1}\left(\left[0,T\right],\mathbb{R}\right)$,
ensured by Proposition $\ref{Step1}$, implies that
\[
\liminf_{n\to+\infty}\gamma_{n}\left(t\right)\leq\gamma\left(t\right)\mbox{ a.e. in }\left[0,T\right].
\]
Moreover, $\left(\gamma_{n}\right)_{n\in\mathbb{N}}\subseteq\Lambda\left(k_{0}\right)$,
hence any $\gamma_{n}$ is almost everywhere non-negative in $\left[0,T\right]$.
This implies $\gamma\geq0$ almost everywhere in $\left[0,T\right]$.

Set $\kappa:=k\left(\cdot;k_{0},\gamma\right)$ and $\kappa_{n}:=k\left(\cdot;k_{0},\gamma_{n}\right)$;
we show that, for every $t\in\left[0,T\right]$:
\[
\limsup_{n\to+\infty}\kappa_{n}\left(t\right)\leq\kappa\left(t\right).
\]
Then the second assertion will follow from the fact that $\kappa_{n}\geq0$
in $\left[0,T\right]$ for any $n\in\mathbb{N}$, by the admissibility
of the $\gamma_{n}$'s.

Fix $n\in\mathbb{N}$. Subtracting the state equation for $\kappa$
from the state equation for $\kappa_{n}$, we obtain, for every $t\in\left[0,T\right]$:
\begin{eqnarray*}
\dot{\kappa_{n}}\left(t\right)-\dot{\kappa}\left(t\right) & = & F\left(\kappa_{n}\left(t\right)\right)-F\left(\kappa\left(t\right)\right)-\left[\gamma_{n}\left(t\right)-\gamma\left(t\right)\right]\ \leq\ \overline{M}\left[\kappa_{n}\left(t\right)-\kappa\left(t\right)\right]-\left[\gamma_{n}\left(t\right)-\gamma\left(t\right)\right]
\end{eqnarray*}
which implies
\[
\left[\dot{\kappa_{n}}\left(t\right)-\dot{\kappa}\left(t\right)\right]e^{-\overline{M}t}-e^{-\overline{M}t}\overline{M}\left[\kappa_{n}\left(t\right)-\kappa\left(t\right)\right]\leq e^{-\overline{M}t}\left[\gamma\left(t\right)-\gamma_{n}\left(t\right)\right]
\]
that is to say:
\begin{eqnarray*}
\frac{\mbox{d}}{\mbox{d}t}\left[\left[\kappa_{n}\left(t\right)-\kappa\left(t\right)\right]e^{-\overline{M}t}\right] & \leq & e^{-\overline{M}t}\left[\gamma\left(t\right)-\gamma_{n}\left(t\right)\right].
\end{eqnarray*}
Hence, for every fixed $t\in\left[0,T\right]$:
\begin{eqnarray*}
\kappa_{n}\left(t\right)-\kappa\left(t\right) & \leq & \int_{0}^{t}e^{\overline{M}\left(t-s\right)}\left[\gamma\left(s\right)-\gamma_{n}\left(s\right)\right]\mbox{d}s\ =\ \int_{0}^{T}\chi_{\left[0,t\right]}\left(s\right)e^{\overline{M}\left(t-s\right)}\left[\gamma\left(s\right)-\gamma_{n}\left(s\right)\right]\mbox{d}s.
\end{eqnarray*}
The function $s\to\chi_{\left[0,t\right]}\left(s\right)e^{\overline{M}\left(t-s\right)}$
is bounded in $\left[0,T\right]$ (by $1$ and $e^{\overline{M}t}$),
hence we can apply the weak convergence $\gamma_{n}\rightharpoonup\gamma$
in $L^{1}\left(\left[0,T\right],\mathbb{R}\right)$ to deduce that
the quantity at the right-hand member of the above inequality tends
to $0$ as $n\to+\infty$. Hence

\[
\limsup_{n\to+\infty}\kappa_{n}\left(t\right)\leq\kappa\left(t\right).
\]

\end{proof}
As a consequence, $\gamma$ is almost everywhere non-negative in $\left[0,+\infty\right)$
and

$k\left(\cdot;k_{0},\gamma\right)$ is everywhere non-negative in
$\left[0,+\infty\right)$ - which precisely means that $\gamma\in\Lambda\left(k_{0}\right)$.
Hence the second step is also ended.

$\,$\\
\textbf{\emph{Step 3.}} Now it is time to define the control which
is optimal at $k_{0}$. In order to do this, we need to extract a
subsequence from $\left(\gamma_{n}\right)_{n\in\mathbb{N}}$ because
the weak convergence to $\gamma$ in the intervals could not be enough
to ensure that $\lim_{n\to+\infty}U\left(\gamma_{n};k_{0}\right)=U\left(\gamma;k_{0}\right)$;
we will also need the admissibility of $\gamma$. By the last assertion
stated in Proposition $\ref{Step1}$, and by the monotonicity of $u$,
we have:
\[
\left\Vert u\left(\gamma_{n}\right)\right\Vert _{\infty,\left[0,1\right]}\leq u\left(N\left(k_{0},1\right)\right)\quad\forall n\in\mathbb{N}.
\]
 Hence by Lemma $\ref{Dunf-Pett}$, there exists a function $f^{1}\in L^{1}\left(\left[0,1\right],\mathbb{R}\right)$
and a sequence $\left(u\left(\gamma_{1,n}\right)\right)_{n\in\mathbb{N}}$
extracted from $\left(u\left(\gamma_{n}\right)\right)_{n\in\mathbb{N}}$,
such that
\[
u\left(\gamma_{1,n}\right)\rightharpoonup f^{1}\mbox{ in }L^{1}\left(\left[0,1\right],\mathbb{R}\right).
\]
Again by Proposition $\ref{Step1}$ and the monotonicity of $u$,
\[
\left\Vert u\left(\gamma_{1,n}\right)\right\Vert _{\infty,\left[0,2\right]}\leq u\left(N\left(k_{0},2\right)\right)\quad\forall n\in\mathbb{N}
\]
which implies by Lemma $\ref{Dunf-Pett}$ the existence of $f^{2}\in L^{1}\left(\left[0,2\right],\mathbb{R}\right)$
and of a sequence $\left(u\left(\gamma_{2,n}\right)\right)_{n\in\mathbb{N}}$
extracted from $\left(u\left(\gamma_{1,n}\right)\right)_{n\in\mathbb{N}}$
such that
\[
u\left(\gamma_{2,n}\right)\rightharpoonup f^{2}\mbox{ in }L^{1}\left(\left[0,2\right],\mathbb{R}\right);
\]
in particular $f^{2}=f^{1}$ almost everywhere in $\left[0,1\right]$
by the essential uniqueness of the weak limit.

Going on this way we see that there exists a family $\left\{ \left(u\left(\gamma_{T,n}\right)_{n\in\mathbb{N}},f^{T}\right)/T\in\mathbb{N}\right\} $
satisfying, for every $T\in\mathbb{N}$:
\begin{align*}
 & \left\Vert u\left(\gamma_{T,n}\right)\right\Vert _{\infty,\left[0,T\right]}\leq u\left(N\left(k_{0},T\right)\right)\quad\forall n\in\mathbb{N}\\
 & \left(u\left(\gamma_{T+1,n}\right)\right)_{n\in\mathbb{N}}\mbox{ is extracted from }\left(u\left(\gamma_{T,n}\right)\right)_{n\in\mathbb{N}}\\
 & f^{T+1}=f^{T}\mbox{ almost everywhere in }\left[0,T\right]\\
 & u\left(\gamma_{T,n}\right)\rightharpoonup f^{T}\mbox{ in }L^{1}\left(\left[0,T\right],\mathbb{R}\right).
\end{align*}
Hence, for every $T\in\mathbb{N}$, the sequence $\left(u\left(\gamma_{n,n}\right)\right)_{n\geq T}$
is extracted from $\left(u\left(\gamma_{T,n}\right)\right)_{n\in\mathbb{N}}$
. If we define $f\left(t\right):=f^{\left[t\right]+1}\left(t\right)$,
then $f=f^{T}$ almost everywhere in $\left[0,T\right]$. So
\begin{equation}
u\left(\gamma_{n,n}\right)\rightharpoonup f\mbox{ in }L^{1}\left(\left[0,T\right],\mathbb{R}\right)\quad\forall T>0.\label{conv debole u(gamma_n,n)}
\end{equation}

by construction and by Remark $\ref{remark weak ovunque}$. This implies
that
\[
0\leq\liminf_{n\to+\infty}u\left(\gamma_{n,n}\left(t\right)\right)\leq f\left(t\right)
\]
for almost every $t\in\mathbb{R}$.

Now define $c^{*}:\left[0,+\infty\right)\to\mathbb{R}$ as 
\[
c^{*}\left(t\right):=\begin{cases}
u^{-1}\left(f\left(t\right)\right) & \mbox{ if }f\left(t\right)\geq0\\
0 & \mbox{ if }f\left(t\right)<0.
\end{cases}
\]
Obviously $c^{*}\geq0$ everywhere in $\mathbb{R}$. Moreover, again
by the properties of the weak convergence, for any $T\in\mathbb{N}$
and for almost every $t\in\left[0,T\right]$:
\[
f\left(t\right)\leq\limsup_{n\to+\infty}u\left(\gamma_{n,n}\left(t\right)\right)\leq u\left(N\left(k_{0},T\right)\right).
\]
This implies, together with the fact that $u^{-1}$ is increasing,
that $c^{*}$ is bounded above by $N\left(k_{0},T\right)$ almost
everywhere in $\left[0,T\right]$. As this holds for every $T\in\mathbb{N}$,
\begin{equation}
c^{*}\in L_{loc}^{\infty}\left(\left[0,+\infty\right),\mathbb{R}\right).\label{c* L1 loc}
\end{equation}
To complete the proof of the admissibility of $c^{*}$, we show that
$c^{*}\leq\gamma$ almost everywhere in $\left[0,+\infty\right)$.

Fix $T>0$ and let $t_{0}\in\left[0,T\right]$ be a Lebesgue point
for both $f$ and $\gamma$ in $\left[0,T\right]$; then take $t_{1}\in\left(t_{0},T\right)$.
By the concavity of $u$ and by Jensen inequality:
\begin{equation}
\frac{\int_{t_{0}}^{t_{1}}u\left(\gamma_{n,n}\left(s\right)\right)\mbox{d}s}{t_{1}-t_{0}}\leq u\left(\frac{\int_{t_{0}}^{t_{1}}\gamma_{n,n}\left(s\right)\mbox{d}s}{t_{1}-t_{0}}\right)\label{per c*<gamma}
\end{equation}

Observe that $\left(\gamma_{n,n}\right)_{n\geq1}$ is a subsequence
of $\left(\gamma_{1,n}\right)_{n\in\mathbb{N}}$, which is in its
turn extracted from $\left(\gamma_{n}\right)_{n\in\mathbb{N}}$. Hence
$\gamma_{n,n}\rightharpoonup\gamma\mbox{ in }L^{1}\left(\left[0,T\right],\mathbb{R}\right)$,
which implies $\lim_{n\to+\infty}\int_{t_{0}}^{t_{1}}\gamma_{n,n}\left(s\right)\mbox{d}s=\int_{t_{0}}^{t_{1}}\gamma\left(s\right)\mbox{d}s.$
So taking the limit for $n\to+\infty$ in $\eqref{per c*<gamma}$,
by the continuity of $u$ and by $\eqref{conv debole u(gamma_n,n)}$,
we have:
\[
\frac{\int_{t_{0}}^{t_{1}}f\left(s\right)\mbox{d}s}{t_{1}-t_{0}}\leq u\left(\frac{\int_{t_{0}}^{t_{1}}\gamma\left(s\right)\mbox{d}s}{t_{1}-t_{0}}\right).
\]
As $t_{0}$ is a Lebesgue point for both $f$ and $\gamma$ in $\left[0,T\right]$,
we can take the limit for $t_{1}\to t_{0}$ in the previous inequality
and get $f\left(t_{0}\right)\leq u\left(\gamma\left(t_{0}\right)\right)$.

By the Lebesgue Point Theorem, this argument works for almost every
$t_{0}\in\left[0,T\right]$. So by the monotonicity of $u^{-1}$ we
deduce
\[
c^{*}\leq\gamma\mbox{ almost everywhere in }\left[0,T\right].
\]
Because $T$ is generic, we have by $\eqref{eq: comp ODE weak}$:
$k\left(t;k_{0},c^{*}\right)\geq k\left(t;k_{0},\gamma\right)$ for
every $t\in\mathbb{R}$. Hence by the admissibility of $\gamma$ at
$k_{0}$, $k\left(\cdot;k_{0},c^{*}\right)\geq0$. This implies, together
with $\eqref{c* L1 loc}$ and $c^{*}\geq0$ in $\left[0,+\infty\right)$,
\[
c^{*}\in\Lambda\left(k_{0}\right).
\]
Then by Proposition $\ref{Step1}$, by the fact that $\left(\gamma_{n,n}\right)_{n\in\mathbb{N}}$
is extracted from $\left(\gamma_{n}\right)_{n\in\mathbb{N}}$, by
Lemma $\ref{funzionale finito}$, iii) , by $\eqref{conv debole u(gamma_n,n)}$
and by Fatou's Lemma:
\begin{eqnarray*}
V\left(k_{0}\right) & = & \lim_{n\to+\infty}U\left(\gamma_{n};k_{0}\right)=\lim_{n\to+\infty}U\left(\gamma_{n,n};k_{0}\right)\\
 & = & \lim_{n\to+\infty}\rho\int_{0}^{+\infty}e^{-\rho t}\int_{0}^{t}u\left(\gamma_{n,n}\left(s\right)\right)\mbox{ds}\mbox{d}t\\
 & \leq & \rho\int_{0}^{+\infty}e^{-\rho t}\limsup_{n\to+\infty}\int_{0}^{t}u\left(\gamma_{n,n}\left(s\right)\right)\mbox{ds}\mbox{d}t\\
 & = & \rho\int_{0}^{+\infty}e^{-\rho t}\int_{0}^{t}f\left(s\right)\mbox{ds}\mbox{d}t\\
 & = & \rho\int_{0}^{+\infty}e^{-\rho t}\int_{0}^{t}u\left(c^{*}\left(s\right)\right)\mbox{ds}\mbox{d}t=U\left(c^{*};k_{0}\right).
\end{eqnarray*}
Hence we have proved that for every $k_{0}\geq0$ there exists $c^{*}\in\Lambda\left(k_{0}\right)$
which is optimal at $k_{0}$ and everywhere positive in $\mathbb{R}$,
satisfying:
\[
c^{*}\in L_{loc}^{\infty}\left(\left[0,+\infty\right),\mathbb{R}\right).
\]

\section{Further properties of the value function}

Now it is possible to set some regularity properties of the value
function, with the help of optimal controls. The next theorem uses
the monotonicity with respect to the first variable of the function
defined in Lemma $\ref{controlli limitati e meglio}$.
\begin{thm}
\label{Further prop V}The value function $V:\left[0,+\infty\right)\to\mathbb{R}$
satisfies:

i) V is strictly increasing

ii) For every $k_{0}>0$, there exists $C\left(k_{0}\right),\delta>0$
such that for every $h\in\left(-\delta,\delta\right)$:
\[
\frac{V\left(k_{0}+h\right)-V\left(k_{0}\right)}{h}\geq C\left(k_{0}\right)
\]

iii) V is Lipschitz-continuous in every closed sub-interval of $\left(0,+\infty\right)$.\end{thm}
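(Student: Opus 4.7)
Each of the three claims can be handled by a compact argument built on the preliminary lemmas.

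For \textbf{(i)}, I apply Lemma~\ref{lemma per monotonia V} directly. Fixing $0 < k_0 < k_1$ and taking any $c \in \Lambda(k_0)$, the lemma produces $\underline{c}^{k_1-k_0} \in \Lambda(k_1)$ with
\[
V(k_1) \geq U(\underline{c}^{k_1-k_0};k_1) \geq U(c;k_0) + u'\bigl(N(k_0,k_1-k_0)+1\bigr)\cdot\frac{1-e^{-\rho(k_1-k_0)}}{\rho}.
\]
Taking the supremum over $c \in \Lambda(k_0)$ yields $V(k_1) - V(k_0) > 0$ since $u' > 0$ throughout $(0,+\infty)$. For the boundary case $k_0 = 0$, Theorem~\ref{Basic Prop V}.iii gives $V(0) = 0$, while for any $k_1 > 0$ the constant control $c \equiv F(k_1)$ is admissible by Proposition~\ref{caratt controlli ammiss costanti}, giving $V(k_1) \geq u(F(k_1))/\rho > 0$.

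For \textbf{(ii)}, I sharpen this into a uniform estimate near $k_0$. Choose $\delta \in (0,k_0)$. For $h \in (0,\delta)$, the bound from (i), combined with the monotonicity of $N(\cdot,\cdot)$ in both arguments and the fact that $x \mapsto (1-e^{-x})/x$ is decreasing, yields
\[
\frac{V(k_0+h) - V(k_0)}{h} \geq u'\bigl(N(k_0,\delta)+1\bigr)\cdot\frac{1-e^{-\rho\delta}}{\rho\delta}.
\]
For $h \in (-\delta,0)$, I apply Lemma~\ref{lemma per monotonia V} with the smaller state $k_0+h$ and the larger state $k_0$: the resulting inequality $V(k_0) - V(k_0+h) \geq u'(N(k_0+h,-h)+1)(1-e^{\rho h})/\rho$, divided by $-h > 0$ and majorised using $N(k_0+h,-h) \leq N(k_0,\delta)$, gives the same lower bound. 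Hence $C(k_0) := u'(N(k_0,\delta)+1)(1-e^{-\rho\delta})/(\rho\delta)$ is a valid choice.

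For \textbf{(iii)}, the key idea is a \emph{delay construction} controlling $V(k_1) - V(k_0)$ from above. Fix $[a,b] \subset (0,+\infty)$ and $a \leq k_0 < k_1 \leq b$. Since $F > 0$ on $(0,+\infty)$, the time $\tau := \int_{k_0}^{k_1} dk/F(k)$ needed for the zero-consumption trajectory to drift from $k_0$ to $k_1$ is finite and satisfies $\tau \leq (k_1-k_0)/F(a)$. For every $c \in \Lambda(k_1)$, define
\[
\tilde{c}(t) := \begin{cases} 0 & \text{if } t \in [0,\tau), \\ c(t-\tau) & \text{if } t \geq \tau. \end{cases}
\]
The corresponding orbit starting from $k_0$ agrees with $k(\cdot;k_0,0)$ on $[0,\tau]$ (positive and reaching $k_1$ at time $\tau$) and then coincides with the $\tau$-translate of $k(\cdot;k_1,c)$ on $[\tau,+\infty)$, so $\tilde{c} \in \Lambda(k_0)$. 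Because $u(0)=0$, a change of variables gives $U(\tilde{c};k_0) = e^{-\rho\tau}U(c;k_1)$, whence taking the supremum over $c$ produces $V(k_0) \geq e^{-\rho\tau}V(k_1)$. Combined with $V(k_1) \leq V(b)$ and $1 - e^{-\rho\tau} \leq \rho\tau$, this yields
\[
0 \leq V(k_1) - V(k_0) \leq V(b)\cdot\frac{\rho}{F(a)}\cdot(k_1 - k_0),
\]
i.e., Lipschitz continuity on $[a,b]$ with constant $V(b)\rho/F(a)$.

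\textbf{Main obstacle.} The delicate point is the admissibility check in (iii): one must verify that prepending a null-consumption segment to an admissible control at $k_1$ yields an admissible control at the strictly smaller state $k_0$. This relies on the strict positivity of $F$ away from $0$, which makes $\tau$ finite and the drift orbit strictly increasing. Once this is secured, (i) and (ii) are two applications of Lemma~\ref{lemma per monotonia V} together with the monotonicity of $N$ noted in Lemma~\ref{controlli limitati e meglio}.
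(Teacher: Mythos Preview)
Your proof is correct and follows the same overall strategy as the paper: Lemma~\ref{lemma per monotonia V} for the strict lower bound on the difference quotient, and the ``delay'' construction (prepending a null-consumption segment until the zero-control orbit reaches $k_1$) for the Lipschitz upper bound.

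There is, however, one genuine and useful difference. The paper's proof of all three parts invokes the \emph{existence} of optimal controls established in Section~5 (e.g.\ ``take $c\in\Lambda(k_0)$ optimal at $k_0$'' in (i) and (ii), ``take $c_1\in\Lambda(k_1)$ optimal at $k_1$'' in (iii)). You instead apply Lemma~\ref{lemma per monotonia V} to an arbitrary $c\in\Lambda(k_0)$ and then pass to the supremum, exploiting that the gain term $u'(N(k_0,k_1-k_0)+1)\int_0^{k_1-k_0}e^{-\rho t}\,dt$ is independent of $c$; similarly in (iii) you bound $V(k_0)\geq e^{-\rho\tau}U(c;k_1)$ for every $c\in\Lambda(k_1)$ before taking the supremum. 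This makes Theorem~\ref{Further prop V} logically independent of the existence result, which is a clean simplification. Your explicit formula $\tau=\int_{k_0}^{k_1}dk/F(k)$ and the bound $\tau\leq(k_1-k_0)/F(a)$ via $F(k)\geq F(a)$ on $[k_0,k_1]$ is also tidier than the paper's Darboux--Lagrange argument, though the resulting Lipschitz constant $\rho V(b)/F(a)$ is the same.
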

\begin{proof}
i) Let $0<k_{0}<k_{1}$. Set $c\in\left(0,F\left(k_{0}\right)\right]$
and $c_{0}\equiv c$ in $\left[0,+\infty\right)$; hence by Lemma
$\ref{caratt controlli ammiss costanti}$ and by Theorem $\ref{Basic Prop V}$,
\[
V\left(0\right)=0<\frac{u\left(c\right)}{\rho}=U\left(c_{0};k_{0}\right)\leq V\left(k_{0}\right).
\]
In order to establish that $V\left(k_{0}\right)<V\left(k_{1}\right)$,
take $c\in\Lambda\left(k_{0}\right)$ optimal at $k_{0}$ and define
$\underline{c}^{k_{1}-k_{0}}$ as in Lemma $\ref{lemma per monotonia V}$.
As
\[
u'\left(N\left(k_{0},k_{1}-k_{0}\right)+1\right)\int_{0}^{k_{1}-k_{0}}e^{-\rho t}\mbox{d}t>0
\]
we have
\[
V\left(k_{0}\right)=U\left(c;k_{0}\right)<U\left(\underline{c}^{k_{1}-k_{0}};k_{1}\right)\leq V\left(k_{1}\right)
\]

ii) We split the proof in two parts.

First, take $k_{0},h>0$, $c$ optimal at $k_{0}$ and set $k_{1}:=k_{0}+h$.
Because $k_{1}>k_{0}$ we can choose $\underline{c}^{k_{1}-k_{0}}=\underline{c}^{h}\in\Lambda\left(k_{0}+h\right)$
as in Lemma $\ref{lemma per monotonia V}$. Hence
\begin{eqnarray*}
V\left(k_{0}+h\right)-V\left(k_{0}\right) & \geq & U\left(\underline{c}^{h};k_{0}+h\right)-U\left(c;k_{0}\right)\ \geq\ u'\left(N\left(k_{0},h\right)+1\right)\int_{0}^{h}e^{-\rho t}\mbox{d}t
\end{eqnarray*}
Now, by the fact that $\lim_{h\to0}\frac{1}{h}\int_{0}^{h}e^{-\rho t}\mbox{d}t=1$
and that $N\left(k_{0},\cdot\right)$ is increasing, there exists
$\delta>0$ such that, for any $h\in\left(0,\delta\right)$: 
\begin{eqnarray*}
\frac{V\left(k_{0}+h\right)-V\left(k_{0}\right)}{h} & \geq & u'\left(N\left(k_{0},h\right)+1\right)\frac{\int_{0}^{h}e^{-\rho t}\mbox{d}t}{h}\ \geq\ \frac{u'\left(N\left(k_{0},1\right)+1\right)}{2}=:C\left(k_{0}\right)
\end{eqnarray*}

In the second place, fix $k_{0}>0$, $h<0$ and $c$ optimal at $k_{0}+h$.

Then again take $\underline{c}^{k_{0}-\left(k_{0}+h\right)}=\underline{c}^{-h}\in\Lambda\left(k_{0}\right)$
as in Lemma $\ref{lemma per monotonia V}$. Hence
\begin{eqnarray*}
V\left(k_{0}+h\right)-V\left(k_{0}\right) & \leq & U\left(c;k_{0}+h\right)-U\left(\underline{c}^{-h};k_{0}\right)\\
 & \leq & -u'\left(N\left(k_{0}+h,-h\right)+1\right)\int_{0}^{-h}e^{-\rho t}\mbox{d}t.
\end{eqnarray*}
We can assume that $-\frac{1}{h}\int_{0}^{-h}e^{-\rho t}\mbox{d}t\geq\frac{1}{2}$
for $-\delta<h<0$. Hence, by the monotonicity of $N\left(\cdot,\cdot\right)$
in both variables, for every $h\in\left(-\delta,0\right)$:
\[
\frac{V\left(k_{0}+h\right)-V\left(k_{0}\right)}{h}\geq\frac{u'\left(N\left(k_{0}+h,-h\right)+1\right)}{2}\geq\frac{u'\left(N\left(k_{0},1\right)+1\right)}{2}=C\left(k_{0}\right).
\]

iii) Let $0<k_{0}<k_{1}$. We want a reverse inequality for $V\left(k_{1}\right)-V\left(k_{0}\right)$,
so take $c_{1}\in\Lambda\left(k_{1}\right)$ optimal at $k_{1}$.
In order to define the proper $c_{0}\in\Lambda\left(k_{0}\right)$,
observe that the orbit $k=k\left(\cdot;k_{0},0\right)$ (with null
control) satisfies $\dot{k}=F\left(k\right)$. With an argument similar
to the one used in Proposition $\ref{caratt controlli ammiss costanti}$
we can see that $\dot{k}\left(t\right)>F\left(k_{0}\right)>0$ for
every $t>0$, and so $\lim_{t\to+\infty}k\left(t\right)=+\infty$.

Then by Darboux's property there exists $\bar{t}>0$ such that $k\left(\bar{t}\right)=k_{1}$.
Observe that, since $k$ and $F$ are strictly increasing functions,
$\dot{k}$ must also be strictly increasing. Hence appling Lagrange's
thorem to $k$ gives for some $\xi\in\left(0,\bar{t}\right)$:
\begin{eqnarray}
k_{1}-k_{0} & = & k\left(\bar{t}\right)-k\left(0\right)=\bar{t}\cdot\dot{k}\left(\xi\right)>\bar{t}\dot{k}\left(0\right)=\bar{t}F\left(k_{0}\right)\label{eq:lip}
\end{eqnarray}
Now define
\[
c_{0}\left(t\right):=\begin{cases}
0 & \mbox{ if }t\in\left[0,\bar{t}\right]\\
c_{1}\left(t-\bar{t}\right) & \mbox{ if }t>\bar{t}
\end{cases}
\]
 It is easy to check that $c_{0}\in\Lambda\left(k_{0}\right)$, because
\begin{align*}
 & k\left(t;k_{0},c_{0}\right)=k\left(t;k_{0},0\right)>0\quad\forall t\in\left[0,\bar{t}\right]\\
 & k\left(t+\bar{t};k_{0},c_{0}\right)=k\left(t;k_{1},c_{1}\right)\geq0\quad\forall t\geq0
\end{align*}
 by the uniqueness of the orbit; as far as the second equality is
concerned, observe that both orbits pass through $\left(0,k_{1}\right)$
and satisfy the differential equation controlled with $c_{1}$ for
$t>0$. Hence by $\eqref{eq:lip}$:
\begin{eqnarray*}
V\left(k_{1}\right)-V\left(k_{0}\right) & \leq & U\left(c_{1};k_{1}\right)-U\left(c_{0};k_{0}\right)=\int_{0}^{+\infty}e^{-\rho t}\left[u\left(c_{1}\left(t\right)\right)-u\left(c_{0}\left(t\right)\right)\right]\mbox{d}t\\
 & = & \int_{0}^{+\infty}e^{-\rho t}u\left(c_{1}\left(t\right)\right)\mbox{d}t-\int_{\bar{t}}^{+\infty}e^{-\rho t}u\left(c_{1}\left(t-\bar{t}\right)\right)\mbox{d}t\\
 & = & \int_{0}^{+\infty}e^{-\rho t}u\left(c_{1}\left(t\right)\right)\mbox{d}t-\int_{0}^{+\infty}e^{-\rho\left(s+\bar{t}\right)}u\left(c_{1}\left(s\right)\right)\mbox{d}s\\
 & = & \left(1-e^{-\rho\bar{t}}\right)U\left(c_{1};k_{1}\right)\,=\,\left(1-e^{-\rho\bar{t}}\right)V\left(k_{1}\right)\,\leq\,\rho\bar{t}V\left(k_{1}\right)\,<\,\rho V\left(k_{1}\right)\frac{k_{1}-k_{0}}{F\left(k_{0}\right)}
\end{eqnarray*}
So by the monotonicity of $V$ and $F$ we have, for $a\leq k_{0}<k_{1}\leq b$:
\[
V\left(k_{1}\right)-V\left(k_{0}\right)\leq\rho\frac{V\left(b\right)}{F\left(a\right)}\left(k_{1}-k_{0}\right).
\]

\end{proof}

\section{Dynamic Programming}

In this section we study the properties of the value function as a
solution to Bellman and Hamilton-Jacobi-Bellman equations.

First observe that we can translate an orbit by translating the control,
according to the next remark.
\begin{rem}[Translation of the orbit]
\label{translation orbit}For every $k_{0}\geq0$ and every $c\in\mathcal{L}_{loc}^{1}\mbox{\ensuremath{\left(\left(0,+\infty\right),\mathbb{R}\right)}}$:
\[
k\left(\cdot;k\left(\tau;k_{0},c\right),c\left(\cdot+\tau\right)\right)=k\left(\cdot+\tau;k_{0},c\right)
\]
by the uniqueness of the orbit. In particular, if $c\in\Lambda\left(k_{0}\right)$
then $c\left(\cdot+\tau\right)\in\Lambda\left(k\left(\tau;k_{0},c\right)\right)$.
\end{rem}
$\,$

The first step consists in proving a suitable version of Dynamic Programming
Principle.

$\,$
\begin{thm}[\textbf{Bellman's Dynamic Programming Principle}]
\label{BE-GF}For every $\tau>0$, the value function $V:[0,+\infty)\to\mathbb{R}$
satisfies the following functional equation:
\begin{equation}
\forall k_{0}\geq0:\mathbf{\mathrm{v}}\left(k_{0}\right)=\sup_{c\in\Lambda\left(k_{0}\right)}\left\{ \int_{0}^{\tau}e^{-\rho t}u\left(c\left(t\right)\right)\mbox{d}t+e^{-\rho\tau}\mathrm{v}\left(k\left(\tau;k_{0},c\right)\right)\right\} \label{eq:BE-GF}
\end{equation}

in the unknown $\mathrm{v}:[0,+\infty)\to\mathbb{R}$.\end{thm}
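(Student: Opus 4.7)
The plan is to establish the two inequalities separately by direct manipulation of $U(\cdot;\,\cdot)$, relying on Remark \ref{translation orbit} to connect controls on $[0,\tau)$ with controls restarted from $k(\tau;k_0,c)$. Throughout I will use that $V$ is everywhere finite, so that all the quantities appearing are real numbers and the suprema are meaningful.

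First I would prove the inequality ``$\leq$''. Fix $k_0\geq 0$ and any $c\in\Lambda(k_0)$. By Remark \ref{translation orbit}, the translated control $c(\cdot+\tau)$ lies in $\Lambda(k(\tau;k_0,c))$, so by definition of $V$ one has $U(c(\cdot+\tau);k(\tau;k_0,c))\leq V(k(\tau;k_0,c))$. Splitting the integral defining $U(c;k_0)$ at $t=\tau$ and changing variable $s=t-\tau$ in the tail gives
\[
U(c;k_0)=\int_0^\tau e^{-\rho t}u(c(t))\,\mathrm{d}t+e^{-\rho\tau}U(c(\cdot+\tau);k(\tau;k_0,c)).
\]
Combining these two and then taking the supremum over $c\in\Lambda(k_0)$ yields the desired upper bound on $V(k_0)$.

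For the reverse inequality ``$\geq$'' the idea is to concatenate controls. Fix $c_1\in\Lambda(k_0)$ and $c_2\in\Lambda(k(\tau;k_0,c_1))$, and define
\[
\tilde c(t):=\begin{cases} c_1(t) & t\in[0,\tau)\\ c_2(t-\tau) & t\geq\tau.\end{cases}
\]
The main thing to check is $\tilde c\in\Lambda(k_0)$: clearly $\tilde c\geq 0$ a.e.\ and $\tilde c\in\mathcal{L}^1_{loc}([0,+\infty),\mathbb{R})$; moreover by uniqueness of the orbit $k(\cdot;k_0,\tilde c)$ coincides with $k(\cdot;k_0,c_1)$ on $[0,\tau]$ and, again by Remark \ref{translation orbit} applied with initial state $k(\tau;k_0,c_1)$, with $k(\cdot-\tau;k(\tau;k_0,c_1),c_2)$ on $[\tau,+\infty)$, so it is nonnegative everywhere. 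The same splitting as before gives
\[
V(k_0)\geq U(\tilde c;k_0)=\int_0^\tau e^{-\rho t}u(c_1(t))\,\mathrm{d}t+e^{-\rho\tau}U(c_2;k(\tau;k_0,c_1)).
\]
Taking the supremum over $c_2\in\Lambda(k(\tau;k_0,c_1))$ on the right produces $e^{-\rho\tau}V(k(\tau;k_0,c_1))$, and a final supremum over $c_1\in\Lambda(k_0)$ completes the proof.

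I do not expect a genuine obstacle here: the argument is the standard two-sided one, and the only point that would require a sentence of care is the admissibility of the concatenation $\tilde c$, which is guaranteed by the translation remark together with the fact that both $c_1$ and the shifted $c_2$ keep the corresponding orbit nonnegative on their respective time intervals.
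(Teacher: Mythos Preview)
Your proof is correct and follows essentially the same approach as the paper: both directions use the splitting of the integral at $\tau$, Remark \ref{translation orbit} for the ``$\leq$'' inequality, and concatenation of controls (with the same admissibility check via uniqueness of the orbit) for ``$\geq$''. The only cosmetic difference is that the paper carries out the ``$\geq$'' direction with an explicit $\epsilon$-argument rather than taking the two suprema directly as you do.
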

\begin{proof}
Fix $\tau>0$ and $k_{0}\geq0$, and set
\[
\sigma\left(\tau,k_{0}\right):=\sup_{c\in\Lambda\left(k_{0}\right)}\left\{ \int_{0}^{\tau}e^{-\rho t}u\left(c\left(t\right)\right)\mbox{d}t+e^{-\rho\tau}V\left(k\left(\tau;k_{0},c\right)\right)\right\} .
\]
We prove that
\[
\sigma\left(\tau,k_{0}\right)=\sup_{c\in\Lambda\left(k_{0}\right)}U\left(c;k_{0}\right).
\]
In the first place, we show that $\sigma\left(\tau,k_{0}\right)$
is an upper bound of $\left\{ U\left(c;k_{0}\right)\,/\, c\in\Lambda\left(k_{0}\right)\right\} $.

Fix $c\in\Lambda\left(k_{0}\right)$; then by Remark $\ref{translation orbit}$
$c\left(\cdot+\tau\right)\in\Lambda\left(k\left(\tau;k_{0},c\right)\right)$;
hence
\begin{eqnarray*}
\sigma\left(\tau,k_{0}\right) & \geq & \int_{0}^{\tau}e^{-\rho t}u\left(c\left(t\right)\right)\mbox{d}t+e^{-\rho\tau}V\left(k\left(\tau;k_{0},c\right)\right)\\
 & \geq & \int_{0}^{\tau}e^{-\rho t}u\left(c\left(t\right)\right)\mbox{d}t+e^{-\rho\tau}U\left(c\left(\cdot+\tau\right);k\left(\tau;k_{0},c\right)\right)\\
 & = & \int_{0}^{\tau}e^{-\rho t}u\left(c\left(t\right)\right)\mbox{d}t+\int_{0}^{+\infty}e^{-\rho\left(t+\tau\right)}u\left(c\left(t+\tau\right)\right)\mbox{d}t\\
 & = & \int_{0}^{\tau}e^{-\rho t}u\left(c\left(t\right)\right)\mbox{d}t+\int_{\tau}^{+\infty}e^{-\rho s}u\left(c\left(s\right)\right)\mbox{d}t=U\left(c;k_{0}\right)
\end{eqnarray*}
In the second place, fix $\epsilon>0$, and take 
\[
0<\epsilon'\leq\frac{2\epsilon}{\left(1+e^{-\rho\tau}\right)}.
\]
Hence there exists $\tilde{c}_{\epsilon}\in\Lambda\left(k_{0}\right)$
and $\tilde{\tilde{c}}_{\epsilon}\in\Lambda\left(k\left(\tau;k_{0},\tilde{c}_{\epsilon}\right)\right)$
such that
\begin{eqnarray*}
\sigma\left(\tau,k_{0}\right)-\epsilon & \leq & \sigma\left(\tau,k_{0}\right)-\frac{\epsilon'}{2}\left(1+e^{-\rho\tau}\right)\\
 & \leq & \int_{0}^{\tau}e^{-\rho t}u\left(\tilde{c}_{\epsilon}\left(t\right)\right)\mbox{d}t+e^{-\rho\tau}V\left(k\left(\tau;k_{0},\tilde{c}_{\epsilon}\right)\right)-e^{-\rho\tau}\frac{\epsilon'}{2}\\
 & \leq & \int_{0}^{\tau}e^{-\rho t}u\left(\tilde{c}_{\epsilon}\left(t\right)\right)\mbox{d}t+e^{-\rho\tau}U\left(\tilde{\tilde{c}}_{\epsilon};k\left(\tau;k_{0},\tilde{c}_{\epsilon}\right)\right)\\
 & = & \int_{0}^{\tau}e^{-\rho t}u\left(\tilde{c}_{\epsilon}\left(t\right)\right)\mbox{d}t+\int_{0}^{+\infty}e^{-\rho\left(t+\tau\right)}u\left(\tilde{\tilde{c}}_{\epsilon}\left(t\right)\right)\mbox{d}t
\end{eqnarray*}
Now set 
\[
c_{\epsilon}\left(t\right):=\begin{cases}
\tilde{c}_{\epsilon}\left(t\right) & \mbox{ if }t\in\left[0,\tau\right]\\
\tilde{\tilde{c}}_{\epsilon}\left(t-\tau\right) & \mbox{ if }t>\tau
\end{cases}
\]
Hence $c_{\epsilon}\in\mathcal{L}_{loc}^{1}\left(\left(0,+\infty\right),\mathbb{R}\right)$
and $\forall t>0:c_{\epsilon}\left(t+\tau\right)=\tilde{\tilde{c}}_{\epsilon}\left(t\right)$.
So:
\begin{eqnarray}
\sigma\left(\tau,k_{0}\right)-\epsilon & \leq & \int_{0}^{+\infty}e^{-\rho t}u\left(c_{\epsilon}\left(t\right)\right)\mbox{d}t\label{serve per BE}
\end{eqnarray}
Finally, it is easy to show that $c_{\epsilon}\in\Lambda\left(k_{0}\right)$.
Observe that $k\left(\cdot;k_{0},c_{\epsilon}\right)=k\left(\cdot;k_{0},\tilde{c}_{\epsilon}\right)$
in $\left[0,\tau\right]$ by definition of $c_{\epsilon}$ and by
uniqueness. In particular $k\left(\tau;k_{0},c_{\epsilon}\right)=k\left(\tau;k_{0},\tilde{c}_{\epsilon}\right)$,
so that $k\left(\cdot+\tau;k_{0},c_{\epsilon}\right)$ and $k\left(\cdot;k\left(\tau;k_{0},\tilde{c}_{\epsilon}\right),\tilde{\tilde{c}}_{\epsilon}\right)$
have the same initial value; moreover, these two orbits satisfy the
same state equation (i.e. the equation associated to the control $c_{\epsilon}\left(\cdot+\tau\right)$)
and so they coincide, again by uniqueness. Recalling that by definition
$\tilde{c}_{\epsilon}\in\Lambda\left(k_{0}\right)$ and $\tilde{\tilde{c}}_{\epsilon}\in\Lambda\left(k\left(\tau;k_{0},\tilde{c}_{\epsilon}\right)\right)$,
we have $k\left(t;k_{0},c_{\epsilon}\right)\geq0$ for all $t\geq0$.
Hence by $\eqref{serve per BE}$ we can write
\[
\sigma\left(\tau,k_{0}\right)-\epsilon\leq U\left(c_{\epsilon};k_{0}\right)
\]
 and the assertion is proven.
\end{proof}
Equation $\eqref{eq:BE-GF}$ is called \emph{Bellman Functional Equation}.

A consequence of the above theorem is that every control which is
optimal respect to a state, is also optimal respect to every following
optimal state.
\begin{cor}
Let $k_{0}\geq0$, $c^{*}\in\Lambda\left(k_{0}\right)$ . Hence the
following are equivalent:

i) $c^{*}$ is optimal at $k_{0}$

ii) For every $\tau>0$:
\[
V\left(k_{0}\right)=\int_{0}^{\tau}e^{-\rho t}u\left(c^{*}\left(t\right)\right)\mbox{d}t+e^{-\rho\tau}V\left(k\left(\tau;k_{0},c^{*}\right)\right)
\]
Moreover, i) or ii) imply that for every $\tau>0$, $c^{*}\left(\cdot+\tau\right)$
is admissible and optimal at $k\left(\tau;k_{0},c^{*}\right)$.\end{cor}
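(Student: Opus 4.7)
The statement splits into three claims: i) $\Rightarrow$ ii), its converse, and the \emph{moreover} assertion. For i) $\Rightarrow$ ii), the plan is to split the improper integral defining $U(c^{*};k_{0})$ at time $\tau$. The change of variable $s=t-\tau$ together with the orbit identity of Remark~\ref{translation orbit} rewrites the tail as $e^{-\rho\tau}\,U(c^{*}(\cdot+\tau);k(\tau;k_{0},c^{*}))$, and since the same remark gives $c^{*}(\cdot+\tau)\in\Lambda(k(\tau;k_{0},c^{*}))$, this tail is dominated by $e^{-\rho\tau}V(k(\tau;k_{0},c^{*}))$. Using $V(k_{0})=U(c^{*};k_{0})$ one then obtains $V(k_{0})\leq\int_{0}^{\tau}e^{-\rho t}u(c^{*}(t))\,\mbox{d}t+e^{-\rho\tau}V(k(\tau;k_{0},c^{*}))$; the reverse inequality comes from the Bellman supremum of Theorem~\ref{BE-GF} evaluated on the admissible candidate $c^{*}$, and the two combine to (ii).

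For ii) $\Rightarrow$ i), I would pass to the limit $\tau\to+\infty$ in (ii). Rearranging yields
\[
e^{-\rho\tau}V(k(\tau;k_{0},c^{*}))=V(k_{0})-\int_{0}^{\tau}e^{-\rho t}u(c^{*}(t))\,\mbox{d}t,
\]
and Lemma~\ref{funzionale finito} ensures $U(c^{*};k_{0})<+\infty$, so the right-hand side converges monotonically to $\ell:=V(k_{0})-U(c^{*};k_{0})\geq 0$. Optimality of $c^{*}$ is then equivalent to $\ell=0$, which I would establish as a transversality condition at infinity: the sublinearity $V(k)/k\to 0$ provided by Theorem~\ref{Basic Prop V}, ii) (in fact the sharper bound $V(k)\leq\eta k+C(\eta)u(k)$ that can be read off its proof) combined with the growth estimate $k(\tau;k_{0},c^{*})\leq C(k_{0})\,e^{(L+\epsilon_{0})\tau}$ from Remark~\ref{F sublin 2} and Gronwall, together with the decay $e^{\epsilon_{0}t}e^{-\rho t}u(e^{(L+\epsilon_{0})t})\to 0$ assumed in \eqref{assumption u}, forces $e^{-\rho\tau}V(k(\tau;k_{0},c^{*}))\to 0$.

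For the \emph{moreover} assertion, I would write (ii) at time $\tau+s$ and at time $\tau$, subtract, and divide by $e^{-\rho\tau}$: using the orbit identity $k(\tau+s;k_{0},c^{*})=k(s;k(\tau;k_{0},c^{*}),c^{*}(\cdot+\tau))$ of Remark~\ref{translation orbit}, the result is exactly condition (ii) written for the pair $(c^{*}(\cdot+\tau),k(\tau;k_{0},c^{*}))$. Applying the ii) $\Rightarrow$ i) implication already established then yields optimality of $c^{*}(\cdot+\tau)$ at $k(\tau;k_{0},c^{*})$, while admissibility at that state is already part of Remark~\ref{translation orbit}.

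The hard part is the transversality step inside ii) $\Rightarrow$ i). The naive bound $V(k(\tau;k_{0},c^{*}))\leq\eta\,k(\tau;k_{0},c^{*})\leq\eta C(k_{0})e^{(L+\epsilon_{0})\tau}$ only gives $\ell\leq 0$ when $L+\epsilon_{0}\leq\rho$, so in the general regime one really has to reuse the mechanism in the proof of Theorem~\ref{Basic Prop V}, ii), exploiting the fine exponential decay built into \eqref{assumption u} in order to kill the $e^{-\rho\tau}u(k(\tau;k_{0},c^{*}))$ contribution that survives the naive estimate.
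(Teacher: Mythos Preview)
Your argument for i)\,$\Rightarrow$\,ii) is essentially the paper's: split the integral at $\tau$, identify the tail via Remark~\ref{translation orbit}, and sandwich with Theorem~\ref{BE-GF}. The paper also extracts the \emph{moreover} assertion directly from that same chain of inequalities (the second inequality in \eqref{coroll be aggiunta} must be an equality), rather than by reapplying ii)\,$\Rightarrow$\,i) to the shifted pair as you do; your route is fine but depends on the hard direction.

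The hard direction ii)\,$\Rightarrow$\,i) is where your plan has a genuine gap. You correctly isolate the transversality condition $e^{-\rho\tau}V\bigl(k(\tau;k_{0},c^{*})\bigr)\to 0$, and you correctly observe that the naive bound $V(k)\leq\eta k$ fails when $L+\epsilon_{0}>\rho$. But the ``sharper'' inequality $V(k)\leq\eta k+C(\eta)u(k)$ read off the proof of Theorem~\ref{Basic Prop V}\,ii) does \emph{not} cure this: the term $\eta k$ is still there, and the assumptions permit $e^{-\rho\tau}k(\tau;k_{0},c^{*})\to+\infty$ (nothing in \eqref{assumption u} forces $\rho>L+\epsilon_{0}$). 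Killing the $e^{-\rho\tau}u(k(\tau))$ contribution is the easy half; the linear contribution is the obstruction, and your outline does not address it.

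The paper sidesteps bounding $V$ at the (possibly huge) terminal state altogether. It picks an $\epsilon$-optimal control $\hat{c}_{\epsilon}$ at $k(1/\epsilon;k_{0},c^{*})$, concatenates it with $c^{*}\!\upharpoonright\![0,1/\epsilon]$ to obtain $c_{\epsilon}\in\Lambda(k_{0})$, and then bounds the tail $\int_{1/\epsilon}^{\infty}e^{-\rho s}u(c_{\epsilon}(s))\,\mathrm{d}s$ \emph{uniformly over admissible controls at $k_{0}$} using the estimates of Lemma~\ref{funzionale finito} (which depend only on $k_{0}$, not on the terminal state). That uniform tail estimate is exactly what makes the limit go through without any relation between $\rho$ and $L+\epsilon_{0}$. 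Equivalently: $e^{-\rho\tau}V\bigl(k(\tau;k_{0},c^{*})\bigr)=\sup_{\tilde c\in\Lambda(k_{0}),\,\tilde c=c^{*}\text{ on }[0,\tau]}\int_{\tau}^{\infty}e^{-\rho s}u(\tilde c(s))\,\mathrm{d}s$, and the right-hand side tends to $0$ by the uniform tail bound. If you want to salvage your transversality approach, this is the estimate you need, not the pointwise growth of $V$.
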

\begin{proof}
i) $\Rightarrow$ ii) Let us assume that $c^{*}$ is admissible and
optimal at $k_{0}\geq0$ and fix $\tau>0$. Observe that $c^{*}\left(\cdot+\tau\right)$
is admissible at $k\left(\tau;k_{0},c^{*}\right)$ by Remark $\ref{translation orbit}$.
Hence, by Theorem $\ref{BE-GF}$:
\begin{eqnarray}
V\left(k_{0}\right) & \geq & \int_{0}^{\tau}e^{-\rho t}u\left(c^{*}\left(t\right)\right)\mbox{d}t+e^{-\rho\tau}V\left(k\left(\tau;k_{0},c^{*}\right)\right)\nonumber \\
 & \geq & \int_{0}^{\tau}e^{-\rho t}u\left(c^{*}\left(t\right)\right)\mbox{d}t+e^{-\rho\tau}U\left(c^{*}\left(\cdot+\tau\right);k\left(\tau;k_{0},c^{*}\right)\right)\nonumber \\
 & = & \int_{0}^{+\infty}e^{-\rho t}u\left(c^{*}\left(t\right)\right)\mbox{d}t\,=\, U\left(c^{*};k_{0}\right)\,=\, V\left(k_{0}\right).\label{coroll be aggiunta}
\end{eqnarray}
Hence
\begin{equation}
V\left(k_{0}\right)=\int_{0}^{\tau}e^{-\rho t}u\left(c^{*}\left(t\right)\right)\mbox{d}t+e^{-\rho\tau}V\left(k\left(\tau;k_{0},c^{*}\right)\right).\label{caratt. ottimale}
\end{equation}
ii) $\Rightarrow$ i) Suppose that $c^{*}\in\Lambda\left(k_{0}\right)$
and $\eqref{caratt. ottimale}$ holds for every $\tau>0$. For every
$\epsilon>0$ pick $\hat{c}_{\epsilon}\in\Lambda\left(k\left(\frac{1}{\epsilon};k_{0},c^{*}\right)\right)$
such that:
\begin{equation}
V\left(k\left(\frac{1}{\epsilon};k_{0},c^{*}\right)\right)-\epsilon\leq U\left(\hat{c}_{\epsilon};k\left(\frac{1}{\epsilon};k_{0},c^{*}\right)\right).\label{optimal aggiunta}
\end{equation}
Then define
\[
c_{\epsilon}\left(t\right):=\begin{cases}
c^{*}\left(t\right) & \mbox{if }t\in\left[0,\frac{1}{\epsilon}\right]\\
\hat{c}_{\epsilon}\left(t-\frac{1}{\epsilon}\right) & \mbox{if }t>\frac{1}{\epsilon}
\end{cases}
\]
By the same arguments we used in the proof of Theorem $\ref{BE-GF}$
, $c_{\epsilon}\in\Lambda\left(k_{0}\right)$ and, obviously, $c_{\epsilon}\left(t+\frac{1}{\epsilon}\right)=\hat{c}_{\epsilon}\left(t\right)$
for every $t>0$.

Hence, taking $\tau=1/\epsilon$ in $\eqref{caratt. ottimale}$, we
have by $\eqref{optimal aggiunta}$:
\begin{eqnarray}
V\left(k_{0}\right)-\epsilon e^{-\rho/\epsilon} & = & \int_{0}^{1/\epsilon}e^{-\rho t}u\left(c^{*}\left(t\right)\right)\mbox{d}t+e^{-\rho/\epsilon}\left[V\left(k\left(\frac{1}{\epsilon};k_{0},c^{*}\right)\right)-\epsilon\right]\nonumber \\
 & \leq & \int_{0}^{1/\epsilon}e^{-\rho t}u\left(c^{*}\left(t\right)\right)\mbox{d}t+e^{-\rho/\epsilon}U\left(\hat{c}_{\epsilon};k\left(\frac{1}{\epsilon};k_{0},c^{*}\right)\right)\nonumber \\
 & = & \int_{0}^{1/\epsilon}e^{-\rho t}u\left(c^{*}\left(t\right)\right)\mbox{d}t+\int_{0}^{+\infty}e^{-\rho\left(t+\frac{1}{\epsilon}\right)}u\left(c_{\epsilon}\left(t+\frac{1}{\epsilon}\right)\right)\mbox{d}t\nonumber \\
 & = & \int_{0}^{1/\epsilon}e^{-\rho t}u\left(c^{*}\left(t\right)\right)\mbox{d}t+\int_{1/\epsilon}^{+\infty}e^{-\rho s}u\left(c_{\epsilon}\left(s\right)\right)\mbox{d}s\label{stima V -epsilon_per_exp}
\end{eqnarray}
Now we show that the second addend tends to $0$ as $\epsilon\to0$
.Observe that by Jensen inequality, for every $T\geq1/\epsilon$:
\begin{eqnarray}
\int_{1/\epsilon}^{T}e^{-\rho s}u\left(c_{\epsilon}\left(s\right)\right)\mbox{d}s & = & \left[e^{-\rho s}\int_{1/\epsilon}^{s}u\left(c_{\epsilon}\left(\tau\right)\right)\mbox{d}\tau\right]_{s=1/\epsilon}^{s=T}+\rho\int_{1/\epsilon}^{T}e^{-\rho s}\int_{1/\epsilon}^{s}u\left(c_{\epsilon}\left(\tau\right)\right)\mbox{d}\tau\mbox{d}s\nonumber \\
 & \leq & e^{-\rho T}\int_{0}^{T}u\left(c_{\epsilon}\left(\tau\right)\right)\mbox{d}\tau+\rho\int_{1/\epsilon}^{T}e^{-\rho s}\int_{0}^{s}u\left(c_{\epsilon}\left(\tau\right)\right)\mbox{d}\tau\mbox{d}s\nonumber \\
 & \leq & e^{-\rho T}\int_{0}^{T}u\left(c_{\epsilon}\left(\tau\right)\right)\mbox{d}\tau+\rho\int_{1/\epsilon}^{T}se^{-\rho s}u\left(\frac{\int_{0}^{s}c_{\epsilon}\left(\tau\right)\mbox{d}\tau}{s}\right)\mbox{d}s\nonumber \\
 & \to & \rho\int_{1/\epsilon}^{+\infty}se^{-\rho s}u\left(\frac{\int_{0}^{s}c_{\epsilon}\left(\tau\right)\mbox{d}\tau}{s}\right)\mbox{d}s\mbox{\quad\ensuremath{\mbox{as }T\to+\infty},}\label{stima su Ic_epsilon}
\end{eqnarray}

by Lemma $\ref{funzionale finito}$, ii) and by the admissibility
of $c_{\epsilon}$. By point i) of the same Lemma, for every $\epsilon<1$
and every $s\geq1/\epsilon$:
\begin{eqnarray*}
se^{-\rho s}u\left(\frac{\int_{0}^{s}c_{\epsilon}\left(\tau\right)\mbox{d}\tau}{s}\right) & \leq & se^{-\rho s}u\left(M\left(k_{0}\right)\left[1+e^{\left(L+\epsilon_{0}\right)s}\right]+\frac{M\left(k_{0}\right)}{s\left(L+\epsilon_{0}\right)}\right)\\
 & \leq & se^{-\rho s}\Biggl\{ u\left(M\left(k_{0}\right)\right)+M\left(k_{0}\right)u\left(e^{\left(L+\epsilon_{0}\right)s}\right)+u\left(\frac{M\left(k_{0}\right)}{L+\epsilon_{0}}\right)\Biggl\}
\end{eqnarray*}

which implies, together with $\eqref{stima su Ic_epsilon}$, for every
$\epsilon<1$:
\begin{eqnarray*}
0\leq\int_{1/\epsilon}^{+\infty}e^{-\rho s}u\left(c_{\epsilon}\left(s\right)\right)\mbox{d}s & \leq & \rho\int_{1/\epsilon}^{+\infty}se^{-\rho s}u\left(\frac{\int_{0}^{s}c_{\epsilon}\left(\tau\right)\mbox{d}\tau}{s}\right)\mbox{d}s\\
 & \leq & \rho\left[u\left(M\left(k_{0}\right)\right)+u\left(\frac{M\left(k_{0}\right)}{L+\epsilon_{0}}\right)\right]\int_{1/\epsilon}^{+\infty}se^{-\rho s}\mbox{d}s+\\
 & + & \rho M\left(k_{0}\right)\int_{1/\epsilon}^{+\infty}se^{-\rho s}u\left(e^{\left(L+\epsilon_{0}\right)s}\right)\mbox{d}s.
\end{eqnarray*}
By Remark $\ref{remark 1}$ this quantity tends to $0$ as $\epsilon\to0$. 

Hence, letting $\epsilon\to0$ in $\eqref{stima V -epsilon_per_exp}$,
we find:
\[
V\left(k_{0}\right)\leq\int_{0}^{+\infty}e^{-\rho t}u\left(c^{*}\left(t\right)\right)\mbox{d}t=U\left(c^{*};k_{0}\right)
\]
which implies that $c^{*}$ is optimal at $k_{0}$.

Finally, if i) holds, then by $\eqref{coroll be aggiunta}$: 

\[
V\left(k\left(\tau;k_{0},c^{*}\right)\right)=U\left(c^{*}\left(\cdot+\tau\right);k\left(\tau;k_{0},c^{*}\right)\right).
\]

\end{proof}
A careful study of the difference quotients for the functions
\[
t\to e^{-\rho t}V\left(k\left(t\right)\right)
\]
(for an orbit $k$) leads to the following definitions and theorems.

$\,$
\begin{defn}
\label{Def: C+}Let $f\in\mathcal{C}^{0}\left(\left(0,+\infty\right),\mathbb{R}\right)$;
we say that $f\in\mathcal{C}^{+}\left(\left(0,+\infty\right),\mathbb{R}\right)$
if, and only if, for every $k_{0}>0$ there exist $\delta,C^{+},C^{-}>0$
such that
\begin{align*}
 & \frac{f\left(k_{0}+h\right)-f\left(k_{0}\right)}{h}\geq C^{+}\quad\forall h\in\left(0,\delta\right)\\
 & \frac{f\left(k_{0}+h\right)-f\left(k_{0}\right)}{h}\geq C^{-}\quad\forall h\in\left(-\delta,0\right)
\end{align*}

\end{defn}
We note that by Theorem $\ref{Further prop V}$, (ii) the value function
$V$ satisfies
\begin{equation}
V\in\mathcal{C}^{+}\left(\left(0,+\infty\right),\mathbb{R}\right).\label{V e' C+}
\end{equation}

\begin{defn}
\label{Def Hamiltoniana}The function $H:\left[0,+\infty\right)\times\left(0,+\infty\right)\to\mathbb{R}$
defined by
\[
H\left(k,p\right):=-\sup\left\{ \left[F\left(k\right)-c\right]\cdot p+u\left(c\right)\,/\, c\in\left[0,+\infty\right)\right\} 
\]
is called \emph{Hamiltonian}. 

The equation
\begin{equation}
\rho\mathrm{v}\left(k\right)+H\left(k,\mathrm{v}'\left(k\right)\right)=0\quad\forall k>0\label{eq:HJB-GF}
\end{equation}
in the unknown $\mathrm{v}\in\mathcal{C}^{+}\left(\left(0,+\infty\right),\mathbb{R}\right)\cap\mathcal{C}^{1}\left(\left(0,+\infty\right),\mathbb{R}\right)$
is called \emph{Hamilton-Jacobi-Bellman equation} (HJB).
\end{defn}
Observe that any solution of $\eqref{eq:HJB-GF}$ must be strictly
increasing, by Definition $\ref{Def: C+}$.
\begin{rem}
The Hamiltonian is always finite. Indeed
\[
-\sup_{c\in\left[0,+\infty\right)}\left\{ \left[F\left(k\right)-c\right]\cdot p+u\left(c\right)\right\} >-\infty\iff p>0.
\]
If $p>0$, since $\lim_{c\to+\infty}u'\left(c\right)=0$ we can choose
$c_{p}\geq0$ such that $u'\left(c_{p}\right)\leq p$; this implies
by the concavity of $u$:
\[
\forall c\geq0:u\left(c\right)-cp\leq u\left(c\right)-u'\left(c_{p}\right)c\leq u\left(c_{p}\right)-u'\left(c_{p}\right)c_{p},
\]
so that
\[
-F\left(k\right)p-\sup_{c\in\left[0,+\infty\right)}\left\{ u\left(c\right)-cp\right\} \geq-F\left(k\right)p-u\left(c_{p}\right)+u'\left(c_{p}\right)c_{p}>-\infty.
\]
Otherwise, when $p\leq0$ , since $\lim_{c\to+\infty}u\left(c\right)=+\infty$
we have
\[
-F\left(k\right)p-\sup_{c\in\left[0,+\infty\right)}\left\{ u\left(c\right)-cp\right\} \leq-F\left(k\right)p-\sup_{c\in\left[0,+\infty\right)}u\left(c\right)=-\infty.
\]
\end{rem}
\begin{defn}
A function $v\in\mathcal{C}^{+}\left(\left(0,+\infty\right),\mathbb{R}\right)$
is called a \emph{viscosity subsolution }{[}\emph{supersolution}{]}\emph{
of} (HJB) if, and only if:

for every $\varphi\in\mathcal{C}^{1}\left(\left(0,+\infty\right),\mathbb{R}\right)$
and for every local maximum {[}minimum{]} point $k_{0}>0$ of $v-\varphi$:
\begin{eqnarray*}
\rho v\left(k_{0}\right)-\sup\left\{ \left[F\left(k_{0}\right)-c\right]\cdot\varphi'\left(k_{0}\right)+u\left(c\right)\,/\, c\in\left[0,+\infty\right)\right\}  & =\\
\rho v\left(k_{0}\right)+H\left(k_{0},\varphi'\left(k_{0}\right)\right) & \leq & 0\\
 & [\geq & 0]
\end{eqnarray*}

If $v$ is both a viscosity subsolution of (HJB) and a viscosity supersolution
of (HJB), then we say that $v$ is a\emph{ viscosity solution of}
(HJB).\end{defn}
\begin{rem}
\label{remark V puo essere VSS}The latter definition is well posed.
Indeed, let $v\in\mathcal{C}^{+}\left(\left(0,+\infty\right),\mathbb{R}\right)$
and $\varphi\in\mathcal{C}^{1}\left(\left(0,+\infty\right),\mathbb{R}\right)$.
If $k_{0}$ is a local maximum for $v-\varphi$ in $\left(0,+\infty\right)$,
then for $h<0$ big enough we have:
\begin{align*}
 & v\left(k_{0}\right)-v\left(k_{0}+h\right)\geq\varphi\left(k_{0}\right)-\varphi\left(k_{0}+h\right)\implies\\
 & 0<C^{-}\leq\frac{v\left(k_{0}\right)-v\left(k_{0}+h\right)}{h}\leq\frac{\varphi\left(k_{0}\right)-\varphi\left(k_{0}+h\right)}{h}.
\end{align*}
If $k_{0}$ is a local minimum for $v-\varphi$ in $\left(0,+\infty\right)$,
then for $h>0$ small enough we have:
\begin{align*}
 & v\left(k_{0}\right)-v\left(k_{0}+h\right)\leq\varphi\left(k_{0}\right)-\varphi\left(k_{0}+h\right)\implies\\
 & 0<C^{+}\leq\frac{v\left(k_{0}\right)-v\left(k_{0}+h\right)}{h}\leq\frac{\varphi\left(k_{0}\right)-\varphi\left(k_{0}+h\right)}{h}.
\end{align*}
In both cases, we have $\varphi'\left(k_{0}\right)>0$.
\end{rem}
We are now going to prove that the value function is a viscosity solution
of (HJB). As pointed out in the introduction, this will be done without
any regularity assumption on $H$; nevertheless, this function can
be easily shown to be continuous, since for every $k\geq0$, $p>0$:
\[
H\left(k,p\right)=F\left(k\right)p+\left(-u\right)^{*}\left(p\right),
\]
where $\left(-u\right)^{*}$ is the (convex) conjugate function of
the convex function $-u$.
\begin{lem}
\label{seq-orbit GF-1}Let $k_{0}>0$ and $\left(c_{T}\right)_{T>0}\subseteq\Lambda\left(k_{0}\right)$
satisfying:
\[
\left\Vert c_{T}\right\Vert _{\infty,\left[0,T\right]}\leq N\left(k_{0},T\right)\quad\forall T>0.
\]
where $N$ is the function defined in \emph{Lemma} $\ref{controlli limitati e meglio}$.
Hence
\[
\forall T\in\left[0,1\right]:\forall t\in\left[0,T\right]:\left|k\left(t;k_{0},c_{T}\right)-k_{0}\right|\leq Te^{\bar{M}t}\left[F\left(k_{0}\right)+N\left(k_{0},1\right)\right].
\]
In particular $k\left(T;k_{0},c_{T}\right)\to k_{0}$ as $T\to0$.\end{lem}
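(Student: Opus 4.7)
The plan is to express $\kappa_T := k(\cdot; k_0, c_T)$ via the integral form of the state equation and estimate its deviation from $k_0$ by combining the Lipschitz-continuity of $F$ (Remark \ref{F lip vero con esistenza}) with the uniform $L^{\infty}$-bound on $c_T$ given in the hypothesis, and then invoking Gronwall.

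First I would write, for every $T\in(0,1]$ and every $t\in[0,T]$:
\[
\kappa_T(t)-k_0 \;=\; \int_0^t [F(\kappa_T(s))-F(k_0)]\,\mathrm{d}s \;+\; \int_0^t [F(k_0)-c_T(s)]\,\mathrm{d}s.
\]
The Lipschitz estimate from Remark \ref{F lip vero con esistenza} bounds the first integrand in absolute value by $\overline{M}|\kappa_T(s)-k_0|$. For the second, the key observation is that by hypothesis $0\leq c_T(s)\leq N(k_0,T)$ a.e.\ in $[0,T]$, and since $N(k_0,\cdot)$ is increasing (Lemma \ref{controlli limitati e meglio}) and $T\leq 1$, we in fact have $c_T(s)\leq N(k_0,1)$ uniformly in $T\in(0,1]$. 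Hence $|F(k_0)-c_T(s)|\leq F(k_0)+N(k_0,1)$ and, using $t\leq T$,
\[
|\kappa_T(t)-k_0| \;\leq\; T\bigl[F(k_0)+N(k_0,1)\bigr] \;+\; \overline{M}\int_0^t |\kappa_T(s)-k_0|\,\mathrm{d}s.
\]

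Since the forcing term $T[F(k_0)+N(k_0,1)]$ is non-decreasing (in fact constant) in $t$, Gronwall's inequality yields
\[
|\kappa_T(t)-k_0| \;\leq\; T\bigl[F(k_0)+N(k_0,1)\bigr]e^{\overline{M}t} \qquad \forall t\in[0,T],
\]
which is exactly the claimed bound. For the final assertion, taking $t=T\leq 1$ gives $|\kappa_T(T)-k_0|\leq T e^{\overline{M}}[F(k_0)+N(k_0,1)]\to 0$ as $T\to 0$.

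The argument is essentially a one-shot Gronwall estimate, so there is no real obstacle; the only point requiring care is the use of the monotonicity of $N(k_0,\cdot)$ in its second argument to replace the $T$-dependent bound $N(k_0,T)$ by the $T$-independent bound $N(k_0,1)$. Without this observation one would only obtain a bound involving $N(k_0,T)$ on the right-hand side, which a priori could blow up as $T\to 0$ and would not deliver the ``in particular'' conclusion.
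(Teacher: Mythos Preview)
Your proof is correct and matches the paper's argument essentially line for line: both write the integral form of the state equation, bound the $F$-increment via the Lipschitz constant $\overline{M}$, bound $\int_0^t|F(k_0)-c_T(s)|\,\mathrm{d}s$ by $T[F(k_0)+N(k_0,1)]$ using $t\le T\le 1$ and the monotonicity of $N(k_0,\cdot)$, and conclude by Gronwall. The only cosmetic difference is that the paper applies Gronwall first with the constant $\int_0^T|F(k_0)-c_T(s)|\,\mathrm{d}s$ and then estimates that integral, whereas you estimate it before invoking Gronwall; the result is identical.
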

\begin{proof}
Set $k_{0}$ and $\left(c_{T}\right)_{T>0}$ as in the hypothesis
and fix $0\leq T\leq1$. Hence integrating both sides of the state
equation we get, for every $t\in\left[0,T\right]$:
\begin{eqnarray*}
k\left(t;k_{0},c_{T}\right)-k_{0} & = & \int_{0}^{t}\left[F\left(k_{0}\right)-c_{T}\left(s\right)\right]\mbox{d}s+\int_{0}^{t}\left[F\left(k\left(s;k_{0},c_{T}\right)\right)-F\left(k_{0}\right)\right]\mbox{d}s
\end{eqnarray*}
which implies by Remark $\ref{F Lip}$:
\begin{eqnarray*}
\left|k\left(t;k_{0},c_{T}\right)-k_{0}\right| & \leq & \int_{0}^{t}\left|F\left(k_{0}\right)-c_{T}\left(s\right)\right|\mbox{d}s+\int_{0}^{t}\left|F\left(k\left(s;k_{0},c_{T}\right)\right)-F\left(k_{0}\right)\right|\mbox{d}s\\
 & \leq & \int_{0}^{T}\left|F\left(k_{0}\right)-c_{T}\left(s\right)\right|\mbox{d}s+\bar{M}\int_{0}^{t}\left|k\left(s;k_{0},c_{T}\right)-k_{0}\right|\mbox{d}s
\end{eqnarray*}
Hence by Gronwall's inequality and by the monotonicity of $N\left(k_{0},\cdot\right)$,
for every $T\in\left[0,1\right]$ and every $t\in\left[0,T\right]$:
\begin{eqnarray*}
\left|k\left(t;k_{0},c_{T}\right)-k_{0}\right| & \leq & e^{\bar{M}t}\int_{0}^{T}\left|F\left(k_{0}\right)-c_{T}\left(s\right)\right|\mbox{d}s.\\
 & \leq & Te^{\bar{M}t}\left[F\left(k_{0}\right)+N\left(k_{0},T\right)\right]\\
 & \leq & Te^{\bar{M}t}\left[F\left(k_{0}\right)+N\left(k_{0},1\right)\right].
\end{eqnarray*}
\end{proof}
\begin{prop}
The value function $V:\left[0,+\infty\right)\to\mathbb{R}$ is a viscosity
solution of \emph{(HJB)}.

Consequently, if $V\in\mathcal{C}^{1}\left(\left[0,+\infty\right),\mathbb{R}\right)$,
then $V$ is strictly increasing and is a solution of \emph{(HJB)
- $\eqref{eq:HJB-GF}$ }in the classical sense.\end{prop}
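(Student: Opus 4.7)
My strategy is to verify the viscosity sub- and supersolution properties in turn, by combining Bellman's Dynamic Programming Principle (Theorem~\ref{BE-GF}) with short-time estimates on admissible orbits in the spirit of Lemma~\ref{seq-orbit GF-1}. In both cases Remark~\ref{remark V puo essere VSS} ensures that $\varphi'(k_0)>0$, so the supremum defining $-H(k_0,\varphi'(k_0))$ is finite.

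\emph{Supersolution.} Fix $\varphi\in\mathcal{C}^{1}\left(\left(0,+\infty\right),\mathbb{R}\right)$ and let $k_0>0$ be a local minimum of $V-\varphi$. For any constant $c\geq0$, define the modified control $c_{\tau}:=c$ on $\left[0,\tau\right]$ and $c_{\tau}:=0$ on $\left(\tau,+\infty\right)$. For $\tau>0$ sufficiently small, $k(t;k_0,c)>0$ on $\left[0,\tau\right]$ by continuity of the orbit, and the null control starting from $k\left(\tau;k_0,c\right)>0$ keeps the orbit positive, so $c_\tau\in\Lambda(k_0)$. Applying Theorem~\ref{BE-GF} to $c_\tau$ and using the local-minimum inequality $V(k(\tau;k_0,c))\geq V(k_0)+\varphi(k(\tau;k_0,c))-\varphi(k_0)$ yields
\[
V(k_0)\left(1-e^{-\rho\tau}\right)\geq\int_{0}^{\tau}e^{-\rho t}u(c)\,\mbox{d}t+e^{-\rho\tau}\left[\varphi\left(k\left(\tau;k_0,c\right)\right)-\varphi(k_0)\right].
\]
Dividing by $\tau$ and letting $\tau\to0^{+}$, using $k(\tau;k_0,c)\to k_0$ with $\dot{k}(0^{+})=F(k_0)-c$, gives $\rho V(k_0)\geq u(c)+\varphi'(k_0)\left(F(k_0)-c\right)$; taking the supremum over $c\geq0$ produces $\rho V(k_0)+H(k_0,\varphi'(k_0))\geq0$.

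\emph{Subsolution.} Let $k_0$ be a local maximum of $V-\varphi$. By the existence theorem of the previous section, there is an optimal control $c^{*}\in\Lambda(k_0)$; applying Lemma~\ref{controlli limitati e meglio} with $T=1$ to $c^{*}$, we may assume without loss of generality that $c^{*}$ is still optimal and satisfies the a~priori bound $\left\Vert c^{*}\right\Vert_{\infty,\left[0,1\right]}\leq N(k_0,1)$. The optimality characterization (corollary of Theorem~\ref{BE-GF}) then gives, for every $\tau\in(0,1]$,
\[
V(k_0)=\int_{0}^{\tau}e^{-\rho t}u\left(c^{*}(t)\right)\mbox{d}t+e^{-\rho\tau}V\left(k\left(\tau;k_0,c^{*}\right)\right).
\]
Combining with the local-maximum inequality and with the identity $\varphi(k(\tau;k_0,c^{*}))-\varphi(k_0)=\int_{0}^{\tau}\varphi'(k(s;k_0,c^{*}))\left[F(k(s;k_0,c^{*}))-c^{*}(s)\right]\mbox{d}s$, one obtains
\[
V(k_0)\left(1-e^{-\rho\tau}\right)\leq\int_{0}^{\tau}e^{-\rho s}\left[u(c^{*}(s))+\varphi'(k_0)\left(F(k_0)-c^{*}(s)\right)\right]\mbox{d}s+\mathcal{E}(\tau),
\]
where $\mathcal{E}(\tau)$ collects the discrepancies produced by replacing $e^{-\rho\tau}$, $\varphi'(k(s))$ and $F(k(s))$ by their counterparts $e^{-\rho s}$, $\varphi'(k_0)$ and $F(k_0)$. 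Since the integrand of the main term is pointwise bounded by $-H(k_0,\varphi'(k_0))$ by definition of the Hamiltonian, that integral is at most $-H(k_0,\varphi'(k_0))(1-e^{-\rho\tau})/\rho$; dividing by $\tau$ and letting $\tau\to0^{+}$ then yields $\rho V(k_0)+H(k_0,\varphi'(k_0))\leq0$.

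\emph{Main obstacle and classical consequence.} The key technical step is to show $\mathcal{E}(\tau)=o(\tau)$, and this is precisely where the preliminary truncation provided by Lemma~\ref{controlli limitati e meglio} is indispensable: the uniform bound $\left\Vert c^{*}\right\Vert_{\infty,\left[0,1\right]}\leq N(k_0,1)$ enables a short-time estimate analogous to Lemma~\ref{seq-orbit GF-1} yielding $\left|k(s;k_0,c^{*})-k_0\right|=O(\tau)$ uniformly for $s\in[0,\tau]$; combined with the continuity of $\varphi'$ and $F$ together with the $L^{\infty}$-bound on $c^{*}$, each contribution to $\mathcal{E}(\tau)$ is $O(\tau^{2})$. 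Finally, the classical-solution assertion is immediate: if $V\in\mathcal{C}^{1}$, then $V\in\mathcal{C}^{+}$ forces $V'(k_0)>0$ for every $k_0>0$ (hence $V$ is strictly increasing), and choosing $\varphi:=V$ in both the sub- and supersolution definitions collapses the two inequalities to the classical equation $\rho V(k)+H(k,V'(k))=0$ at every $k>0$.
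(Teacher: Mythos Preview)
Your supersolution argument is essentially the paper's. For the subsolution, however, you take a genuinely different route. The paper does \emph{not} invoke the existence of an optimal control: it fixes $\epsilon>0$, picks for each $T>0$ an $\epsilon T$-optimal control $c_{T,\epsilon}$, truncates it via Lemma~\ref{controlli limitati e meglio} to obtain $\bar c_{T,\epsilon}$ with $\|\bar c_{T,\epsilon}\|_{\infty,[0,T]}\le N(k_0,T)$, and then uses Lemma~\ref{seq-orbit GF-1} for the whole family $(\bar c_{T,\epsilon})_{T>0}$ to control the orbits uniformly as $T\to0$; the extra $\epsilon$ is removed at the very end. Your approach instead exploits the existence theorem: once an optimal $c^{*}$ is available, truncating it by Lemma~\ref{controlli limitati e meglio} preserves optimality (since $U((c^{*})^{1};k_0)\ge U(c^{*};k_0)=V(k_0)$), and the exact Bellman identity from the corollary replaces the $\epsilon T$-approximate one. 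This buys you a cleaner argument with a single fixed control and no final $\epsilon\to0$ step, at the cost of depending on the heavier machinery of Section~5; the paper's version is more self-contained and would work even before the existence theorem is established.

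One small imprecision: you assert that each contribution to $\mathcal{E}(\tau)$ is $O(\tau^{2})$. That would require $\varphi'$ to be Lipschitz, whereas $\varphi$ is only $\mathcal{C}^{1}$. What you actually get from $|k(s;k_0,c^{*})-k_0|=O(\tau)$ and the continuity of $\varphi'$ is $|\varphi'(k(s))-\varphi'(k_0)|=o(1)$ uniformly in $s\in[0,\tau]$; combined with the $L^{\infty}$ bound on $c^{*}$ and integration over $[0,\tau]$, this still yields $\mathcal{E}(\tau)=o(\tau)$, which is all that is needed. The paper handles this with explicit continuous functions $\omega_1,\omega_2$ vanishing at $0$ rather than claiming a rate.
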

\begin{proof}
In the first place, we show that $V$ is a viscosity supersolution
of (HJB).

Let $\varphi\in\mathcal{C}^{1}\left(\left(0,+\infty\right),\mathbb{R}\right)$
and $k_{0}>0$ be a local minumum point of $V-\varphi$, so that
\begin{equation}
V\left(k_{0}\right)-V\leq\varphi\left(k_{0}\right)-\varphi\label{eq: supersol 1}
\end{equation}
in a proper neighbourhood of $k_{0}$. Now fix $c\in\left[0,+\infty\right)$
and set $k:=k\left(\cdot;k_{0},c\right)$. As $k_{0}>0$, there exists
$T_{c}>0$ such that $k>0$ in $\left[0,T_{c}\right]$. Hence the
control
\[
\tilde{c}\left(t\right):=\begin{cases}
c & \mbox{ if }t\in\left[0,T_{c}\right]\\
0 & \mbox{ if }t>T_{c}
\end{cases}
\]
is admissible at $k_{0}$. Then by Theorem $\ref{BE-GF}$, for every
$\tau\in\left[0,T_{c}\right]$:
\begin{eqnarray*}
V\left(k_{0}\right)-V\left(k\left(\tau\right)\right) & \geq & \int_{0}^{\tau}e^{-\rho t}u\left(\tilde{c}\left(t\right)\right)\mbox{d}t+V\left(k\left(\tau\right)\right)\left[e^{-\rho\tau}-1\right]\\
 & = & u\left(c\right)\int_{0}^{\tau}e^{-\rho t}\mbox{d}t+V\left(k\left(\tau\right)\right)\left[e^{-\rho\tau}-1\right].
\end{eqnarray*}
Hence by $\eqref{eq: supersol 1}$ and by the continuity of $k$,
we have for every $\tau>0$ sufficiently small: 
\begin{eqnarray*}
\frac{\varphi\left(k\left(0\right)\right)-\varphi\left(k\left(\tau\right)\right)}{\tau} & \geq & u\left(c\right)\frac{\int_{0}^{\tau}e^{-\rho t}\mbox{d}t}{\tau}+V\left(k\left(\tau\right)\right)\frac{\left[e^{-\rho\tau}-1\right]}{\tau}.
\end{eqnarray*}
Letting $\tau\to0$ and using the continuity of $V$ and $k$:
\[
-\varphi'\left(k_{0}\right)\left[F\left(k_{0}\right)-c\right]\geq u\left(c\right)-\rho V\left(k_{0}\right)
\]
 which implies, taking the sup for $c\geq0$:
\[
\rho V\left(k_{0}\right)+H\left(k_{0},\varphi'\left(k_{0}\right)\right)\geq0
\]

Secondly we show that $V$ is a viscosity subsolution of (HJB).

Let $\varphi\in\mathcal{C}^{1}\left(\left(0,+\infty\right),\mathbb{R}\right)$
and $k_{0}>0$ be a local maximum point of $V-\varphi$, so that
\begin{equation}
V\left(k_{0}\right)-V\geq\varphi\left(k_{0}\right)-\varphi\label{v subsol max loc}
\end{equation}
in a proper neighborhood $\mathcal{N}\left(k_{0}\right)$ of $k_{0}$.

Fix $\epsilon>0$ and, using the definition of $V$, define a family
of controls $\left(c_{T,\epsilon}\right)_{T>0}\subseteq\Lambda\left(k_{0}\right)$
such that for every $T>0$:
\begin{equation}
V\left(k_{0}\right)-T\epsilon\leq U\left(c_{T,\epsilon};k_{0}\right).\label{fighetto}
\end{equation}
Now take $\left(c_{T,\epsilon}\right)^{T}$ as in Lemma $\ref{controlli limitati e meglio}$
and set $\bar{c}_{T,\epsilon}:=\left(c_{T,\epsilon}\right)^{T}$ for
simplicity of notation (so that $\bar{c}_{T,\epsilon}\in\Lambda\left(k_{0}\right)$).
We have:
\begin{eqnarray*}
V\left(k_{0}\right)-T\epsilon & \leq & U\left(c_{T,\epsilon};k_{0}\right)\leq U\left(\bar{c}_{T,\epsilon};k_{0}\right)\\
 & = & \int_{0}^{T}e^{-\rho t}u\left(\bar{c}_{T,\epsilon}\left(t\right)\right)\mbox{d}t+e^{-\rho T}\int_{T}^{+\infty}e^{-\rho\left(s-T\right)}u\left(\bar{c}_{T,\epsilon}\left(s-T+T\right)\right)\mbox{d}s\\
 & = & \int_{0}^{T}e^{-\rho t}u\left(\bar{c}_{T,\epsilon}\left(t\right)\right)\mbox{d}t+e^{-\rho T}U\left(\bar{c}_{T,\epsilon}\left(\cdot+T\right);k\left(T;k_{0},\bar{c}_{T,\epsilon}\right)\right)\\
 & \leq & \int_{0}^{T}e^{-\rho t}u\left(\bar{c}_{T,\epsilon}\left(t\right)\right)\mbox{d}t+e^{-\rho T}V\left(k\left(T;k_{0},\bar{c}_{T,\epsilon}\right)\right)
\end{eqnarray*}
where we have used Remark $\ref{translation orbit}$.

By Lemma $\ref{seq-orbit GF-1}$ we have for $T>0$ sufficiently small
(say $T<\hat{T}$), 
\[
k\left(T;k_{0},\bar{c}_{T,\epsilon}\right)\in\mathcal{N}\left(k_{0}\right).
\]

Hence, setting $\bar{k}_{T,\epsilon}:=k\left(\cdot;k_{0},\bar{c}_{T,\epsilon}\right)$,
for every $T<\hat{T}$, we have by $\eqref{v subsol max loc}$:
\begin{eqnarray*}
\varphi\left(k_{0}\right)-\varphi\left(\bar{k}_{T,\epsilon}\left(T\right)\right)-e^{-\rho T}V\left(\bar{k}_{T,\epsilon}\left(T\right)\right) & \leq & V\left(k_{0}\right)-V\left(\bar{k}_{T,\epsilon}\left(T\right)\right)-e^{-\rho T}V\left(\bar{k}_{T,\epsilon}\left(T\right)\right)\\
 & \leq & \int_{0}^{T}e^{-\rho t}u\left(\bar{c}_{T,\epsilon}\left(t\right)\right)\mbox{d}t-V\left(\bar{k}_{T,\epsilon}\left(T\right)\right)+T\epsilon
\end{eqnarray*}
which implies
\begin{eqnarray}
 &  & \int_{0}^{T}-\left\{ \varphi'\left(\bar{k}_{T,\epsilon}\left(t\right)\right)\left[F\left(\bar{k}_{T,\epsilon}\left(t\right)\right)-\bar{c}_{T,\epsilon}\left(t\right)\right]+e^{-\rho t}u\left(\bar{c}_{T,\epsilon}\left(t\right)\right)\right\} \mbox{d}t\nonumber \\
 &  & \leq\, V\left(\bar{k}_{T,\epsilon}\left(T\right)\right)\left[e^{-\rho T}-1\right]+T\epsilon.\label{ineq: subsol-parziale}
\end{eqnarray}
Observe that the integral at the left hand member bigger than: 
\begin{eqnarray}
 &  & \int_{0}^{T}-\left\{ \left[\varphi'\left(k_{0}\right)+\omega_{1}\left(t\right)\right]\left[F\left(k_{0}\right)-\bar{c}_{T,\epsilon}\left(t\right)+\omega_{2}\left(t\right)\right]+u\left(\bar{c}_{T,\epsilon}\left(t\right)\right)\right\} \mbox{d}t=\nonumber \\
 &  & \int_{0}^{T}-\left\{ \varphi'\left(k_{0}\right)\left[F\left(k_{0}\right)-\bar{c}_{T,\epsilon}\left(t\right)\right]+u\left(\bar{c}_{T,\epsilon}\left(t\right)\right)\right\} \mbox{d}t+\nonumber \\
 &  & +\int_{0}^{T}-\left\{ \varphi'\left(k_{0}\right)\omega_{2}\left(t\right)\mbox{d}t+\omega_{1}\left(t\right)\left[\omega_{2}\left(t\right)+F\left(k_{0}\right)-\bar{c}_{T,\epsilon}\left(t\right)\right]\right\} \mbox{d}t\label{subsol-parziale 2}
\end{eqnarray}
where $\omega_{1}$, $\omega_{2}$ are functions which are continuous
in a neighborhood of $0$ and satisfy:
\[
\omega_{1}\left(0\right)=\omega_{2}\left(0\right)=0.
\]
This implies, for $T<1$:
\begin{align*}
 & \left|\int_{0}^{T}\varphi'\left(k_{0}\right)\omega_{2}\left(t\right)\mbox{d}t+\int_{0}^{T}\omega_{1}\left(t\right)\left[\omega_{2}\left(t\right)+F\left(k_{0}\right)-\bar{c}_{T,\epsilon}\left(t\right)\right]\mbox{d}t\right|\\
 & \leq\,\left|\varphi'\left(k_{0}\right)\right|o_{1}\left(T\right)+o_{2}\left(T\right)+\int_{0}^{T}\left|\omega_{1}\left(t\right)\right|\left[F\left(k_{0}\right)+\bar{c}_{T,\epsilon}\left(t\right)\right]\mbox{d}t\\
 & \leq\,\left|\varphi'\left(k_{0}\right)\right|o_{1}\left(T\right)+o_{2}\left(T\right)+\left[F\left(k_{0}\right)+N\left(k_{0},T\right)\right]o_{3}\left(T\right)\\
 & \leq\,\left|\varphi'\left(k_{0}\right)\right|o_{1}\left(T\right)+o_{2}\left(T\right)+\left[F\left(k_{0}\right)+N\left(k_{0},1\right)\right]o_{3}\left(T\right)
\end{align*}
where
\[
\lim_{T\to0}\frac{o_{i}\left(T\right)}{T}=0
\]
for $i=1,2,3$. Observe that this is true even if the $o_{i}$s depend
on $T$, by Lemma $\ref{seq-orbit GF-1}$. For instance,
\begin{eqnarray*}
\left|o_{1}\left(T\right)\right| & = & \left|\int_{0}^{T}\omega_{2}\left(t\right)\mbox{d}t\right|\leq T\max_{\left[0,T\right]}\left|\omega_{2}\right|=T\left|\omega_{2}\left(\tau_{T}\right)\right|\\
 & = & T\left|F\left(\bar{k}_{T,\epsilon}\left(\tau_{T}\right)\right)-F\left(k_{0}\right)\right|\\
 & \leq & \overline{M}T\left|\bar{k}_{T,\epsilon}\left(\tau_{T}\right)-k_{0}\right|\leq\overline{M}T^{2}e^{\bar{M}\tau_{T}}\left[F\left(k_{0}\right)+N\left(k_{0},1\right)\right]
\end{eqnarray*}
Moreover, by the fact that $V\in\mathcal{C}^{+}\left(\left[0,+\infty\right),\mathbb{R}\right)$
and by Remark $\ref{remark V puo essere VSS}$, we have for any $t\in\left[0,T\right]$:
\begin{eqnarray*}
-\left\{ \varphi'\left(k_{0}\right)\left[F\left(k_{0}\right)-\bar{c}_{T,\epsilon}\left(t\right)\right]+u\left(\bar{c}_{T,\epsilon}\left(t\right)\right)\right\}  & \geq & -\sup_{c\geq0}\left\{ \varphi'\left(k_{0}\right)\left[F\left(k_{0}\right)-c\right]+u\left(c\right)\right\} \\
 & = & H\left(k_{0},\varphi'\left(k_{0}\right)\right)>-\infty,
\end{eqnarray*}
by which we can write:
\begin{align*}
 & \int_{0}^{T}-\left\{ \varphi'\left(k_{0}\right)\left[F\left(k_{0}\right)-\bar{c}_{T,\epsilon}\left(t\right)\right]+u\left(\bar{c}_{T,\epsilon}\left(t\right)\right)\right\} \mbox{d}t\,\geq\, T\cdot H\left(k_{0},\varphi'\left(k_{0}\right)\right).
\end{align*}
Hence, by $\eqref{ineq: subsol-parziale}$ and $\eqref{subsol-parziale 2}$:
\begin{align*}
 & V\left(\bar{k}_{T,\epsilon}\left(T\right)\right)\left[e^{-\rho T}-1\right]+T\epsilon\,\\
 & \geq\,-\int_{0}^{T}\left\{ \varphi'\left(k_{0}\right)\left[F\left(k_{0}\right)-\bar{c}_{T,\epsilon}\left(t\right)\right]+u\left(\bar{c}_{T,\epsilon}\left(t\right)\right)\right\} \mbox{d}t+\\
 & \ +\int_{0}^{T}-\left\{ \varphi'\left(k_{0}\right)\omega_{2}\left(t\right)\mbox{d}t+\omega_{1}\left(t\right)\left[\omega_{2}\left(t\right)+F\left(k_{0}\right)-\bar{c}_{T,\epsilon}\left(t\right)\right]\mbox{d}t\right\} \\
 & \geq\, T\cdot H\left(k_{0},\varphi'\left(k_{0}\right)\right)+o_{T\to0}\left(T\right)
\end{align*}
for any $0<T<1,\hat{T}$. Hence dividing by $T$, and then letting
$T\to0$, again by Lemma $\ref{seq-orbit GF-1}$ and the continuity
of $V$ we obtain:
\[
-\rho V\left(k_{0}\right)+\epsilon\geq H\left(k_{0},\varphi'\left(k_{0}\right)\right)
\]
which proves the assertion since $\epsilon$ is arbitrary.\end{proof}

\end{document}